\title{A universal formula for counting cubic surfaces}
\author{Anand Deopurkar, Anand Patel
  \& Dennis Tseng}
\dedicatory{Dedicated to our teacher, Joe Harris, on the occasion of
  his $70$th birthday.}
\renewcommand{\k}{k}
\DeclareMathOperator{\id}{id}
\DeclareMathOperator{\Bl}{Bl}
\DeclareMathOperator{\Orb}{\overline{Orb}}
\DeclareMathOperator{\Polar}{Polar}
\DeclareMathOperator{\Pole}{Pole}
\DeclareMathOperator{\Hilb}{Hilb}
\DeclareMathOperator{\M}{\mathcal{M}}
\DeclareMathOperator{\mult}{mult}
\renewcommand{\to}{{\longrightarrow}}
\else\declaretheorem[parent=subsection]{theorem}\fi
\else\declaretheorem[sibling=theorem]{corollary}\fi
\else\declaretheorem[sibling=theorem]{lemma}\fi
\else\declaretheorem[sibling=theorem]{proposition}\fi
\else\declaretheorem[sibling=theorem, style=definition]{definition}\fi
\else\declaretheorem[sibling=theorem, style=definition]{example}\fi
\declaretheorem[sibling=theorem, style=remark]{remark}\fi
\providecommand {\Z}{{\bf Z}}
\providecommand {\Q}{{\bf Q}}
\renewcommand {\P}{{\bf P}}
\providecommand {\A}{{\bf A}}
\providecommand{\GL}{\operatorname{GL}}
\providecommand{\PGL}{\operatorname{PGL}}
\providecommand{\spec}{\operatorname{Spec}}
\providecommand{\Aut}{\operatorname{Aut}}
\providecommand{\Pic}{\operatorname{Pic}}
\providecommand{\Sym}{\operatorname{Sym}}
\providecommand{\rk}{\operatorname{rk}}
\declaretheorem[sibling=theorem,style=remark]{remark}
\numberwithin{equation}{section}
\renewcommand{\O}{\mathcal O}
\newcommand{\G}{\mathbf G}
\newcommand{\F}{\mathbf F}
\newcommand{\td}{\widetilde}
\newcommand{\V}{\mathcal V}
\newcommand{\cX}{\mathcal{X}}
\newcommand{\smvee}{\raise0.5ex\hbox{$\scriptscriptstyle\vee$}}
\newcommand{\Proj}{{\text{\rm Proj}\,}}
\begin{document}

\maketitle

\begin{abstract}
  Using equivariant geometry, we find a universal formula that computes the number of times a general cubic surface arises in a family.
  As applications, we show that the $\PGL_4$ orbit closure of a generic cubic surface has degree 96120, and that a general cubic surface arises 42120 times as a hyperplane section of a general cubic 3-fold.
\end{abstract}

\section{Introduction}
\label{sec:intro}

Cubic surfaces have fascinated us for centuries. Ever since Cayley and
Salmon discovered the 27 lines, it has become somewhat of a tradition
to revisit their geometry, using increasingly sophisticated tools to
uncover more and more. Our contribution to this story provides a
universal answer to a collection of natural enumerative questions
about families of cubic surfaces.

The isomorphism classes of cubic surfaces form a 4-dimensional moduli
space. So, we expect that in a 4-dimensional family, a general
isomorphism class would appear finitely many times. How can we
determine this number? For example, how many times does a general
cubic surface appear, up to isomorphism, as
\begin{enumerate}[(A)]
\item \label{orbit} a member of a 4-dimensional general linear system of cubic surfaces in $\P^3$?
\item \label{slice} a hyperplane section of a general cubic threefold in $\P^4$?
\item a blow up of 6 points in $\P^2$ varying in a general
  4-dimensional linear system on a cubic curve?
\end{enumerate}
Our formula answers all questions of this kind.  The answers to these
specific questions are $96120$, $42120$, and $25920$, respectively.

To state our formula, we must fix a precise moduli space of cubics.
Let us denote by $\mathcal M$ the GIT quotient of the set of
semistable cubic surfaces in $\P^3$ by the action of $\PGL_4$.  By a
\emph{family of cubic surfaces}, we mean a flat, proper morphism whose
geometric fibers are isomorphic to cubic surfaces in $\P^3$.

\begin{theorem}\label{theorem:main}
  Let $\pi \colon \cX \to B$ be a good family of cubic surfaces over a
  proper base $B$.
  Let $\V$ denote the rank 4 vector bundle
  $ \pi_* \left(\omega_\pi^{-1}\right)$, and let
  \(v_i = c_i\left(\V\right)\) be its Chern classes.  Assume that a
  general fiber of $\pi$ is semistable, and let
  $\mu \colon B \dashrightarrow \mathcal M$ be the induced moduli map.  Then,
  \begin{align}
    \deg \mu =
    \label{eq:MAIN}
    1080 \int_{B} \left(v_{1}^{2}v_{2} - v_{1}v_{3}+ 9v_{4}\right).
  \end{align}
\end{theorem}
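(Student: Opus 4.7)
The plan is to recast the problem as an equivariant intersection computation on the GIT stack $[\P(\Sym^{3}V^{\vee})^{\mathrm{ss}}/\PGL_{4}]$, which — since a general cubic surface has trivial automorphism group — is generically isomorphic to $\M$.

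First, I would pin down the canonical geometry carried by a family. Because $\omega_{S}^{-1}=\O_{S}(1)$ for a cubic surface $S\subset\P^{3}$, the bundle $\V=\pi_{*}\omega_{\pi}^{-1}$ realizes the anticanonical embedding of each fiber, so $\cX$ sits inside the projective bundle $p\colon\P\V\to B$ (in Grothendieck's convention). Applying adjunction with $\omega_{\P\V/B}=\O(-4)\otimes p^{*}\det\V$, a short computation shows that $\cX$ is cut out by a section of $\Sym^{3}\V\otimes(\det\V)^{-1}$: the twist is not free, but is forced by the normalization $\pi_{*}\omega_{\pi}^{-1}=\V$. This yields a canonical line sub-bundle $\det\V\hookrightarrow\Sym^{3}\V$, equivalently a canonical section $\sigma\colon B\to\P(\Sym^{3}\V^{\vee})$ satisfying the key identity $\sigma^{*}\zeta=-v_{1}$, where $\zeta$ is the tautological hyperplane class on the projective bundle.

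Next, I would reduce the degree computation to an intersection on this projective bundle. The classifying data $(\V,\sigma)$ define a map $B\to[\P(\Sym^{3}V^{\vee})/\PGL_{4}]$ whose composition with the birational map to $\M$ equals $\mu$, so
\[
\deg\mu=\int_{B}\sigma^{*}[\Orb],
\]
where $[\Orb]\in H^{*}(\P(\Sym^{3}\V^{\vee}))$ is the class of the relative orbit of a generic cubic — the fiberwise closure of the codimension-$4$ locus of cubics projectively isomorphic to a fixed general one.

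The main obstacle is computing $[\Orb]$ as a universal $\GL_{4}$-equivariant class in $H^{*}_{\GL_{4}}(\P(\Sym^{3}V^{\vee}))$, which is a polynomial in $\zeta$ and the Chern classes $c_{1},\ldots,c_{4}$ of the standard representation. Its leading $\zeta^{4}$-coefficient is $96120$, the degree of the generic orbit closure in $\P^{19}$. To determine the remaining coefficients I would pursue one of two strategies: (a) construct an explicit resolution of the orbit closure, indexed by strata of cubics with prescribed automorphism or singularity type, and push forward the fundamental class; or (b) evaluate $\sigma^{*}[\Orb]$ on a collection of independent test families — the examples (A), (B), (C) of the introduction, supplemented by auxiliary families over products of projective spaces or incidence varieties — to produce enough linear constraints to solve for the five unknown coefficients of the degree-$4$ polynomial in $v_{1},\ldots,v_{4}$ (corresponding to the monomials $v_{1}^{4},\ v_{1}^{2}v_{2},\ v_{2}^{2},\ v_{1}v_{3},\ v_{4}$). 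Substituting $\sigma^{*}\zeta=-v_{1}$ and simplifying then yields the claimed identity; as a sanity check, case (A) recovers $1080\cdot 89=96120$.
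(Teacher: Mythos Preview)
Your strategy (b) is essentially the paper's approach, and the setup you describe is correct in spirit.  A minor simplification: rather than working on $\P(\Sym^{3}V^{\vee})$ with the extra class $\zeta$ and then substituting $\sigma^{*}\zeta=-v_{1}$, the paper works directly with the $\GL_{4}$-representation $W=\Sym^{3}V^{\vee}\otimes\det V$, on which the center acts with weight $-1$ and hence freely on $W\setminus\{0\}$.  Then $A^{*}_{\GL_{4}}(W)=\Z[v_{1},\dots,v_{4}]$ with no projective-bundle class, and $[\Orb]$ is literally a polynomial of degree $4$ in the $v_{i}$.  Your route and this one are equivalent, but the latter avoids a change of variables.

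There is, however, a genuine gap in the choice of test families.  For the undetermined-coefficients method to work, each test family must supply \emph{two} independently computable quantities: the Chern numbers of $\V$, and $\deg\mu$.  The families (A) and (B) from the introduction fail the second requirement: the numbers $96120$ and $42120$ are precisely what the theorem is meant to produce, not inputs you can feed in.  Your ``sanity check'' $1080\cdot 89=96120$ is circular---it checks the formula against its own output.  The paper instead manufactures families whose moduli-map degree can be counted by elementary means: configurations of points on a conic (where $\deg\mu$ reduces to counting $\G_{m}$-orbits meeting a hyperplane), marked rational curves via $\overline{M}_{0,7}$ and a Hassett variant (where $\deg\mu$ reduces to a classical count of cuspidal cubics), a blown-up Hilbert scheme of a quintic del Pezzo, and---crucially---four isotrivial families over $B\G_{m}$, where $\deg\mu=0$ automatically and one gets homogeneous linear relations for free.

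A second omission is the verification that each test family is \emph{good}: that no fiber lies in $\partial\Orb(S)$ for generic $S$, so that there is no excess intersection.  This is not automatic---several of the families have singular fibers with positive-dimensional automorphism group---and the paper devotes a separate section to tools (GIT with auxiliary linearizations, and limits of six-point configurations in $\P^{2}$) for ruling such fibers out of generic orbit closures.  Without this, the linear relations you extract could be contaminated.
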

\autoref{def:goodfamily} gives the precise meaning of a good family.  This notion includes any family whose fibers are
GIT-semistable or have a finite automorphism group.

\subsection{Degree of the orbit closure}
Question~\ref{orbit} in \autoref{sec:intro} fits in a broader story
that has been an ongoing topic of study.  Consider a hypersurface
\(X\) in a projective space $\P V$ cut out by a homogeneous polynomial
$F \in \Sym^d V^\vee$.  In $\P\Sym^dV^\vee$, consider the orbit of
$[F]$ under the action of $\GL(V)$, and let $\Orb([F])$ be its
closure.  What is the degree of $\Orb([F])$?  To our knowledge,
Enriques and Fano in 1897 were the first to address this question.
They computed the degree for simple binary forms of all degrees
\cite{enr.fan:97}.  In a series of seminal papers \cite{Aluffi1993,
  alu.fab:93, alu.fab:00}, Aluffi and Faber computed the degree of the
orbit closure for \emph{all} binary and ternary forms, finishing off
the cases of points in $\P^1$ and curves in $\P^2$.  The next case is
that of cubic surfaces in $\P^3$.  \autoref{theorem:main} applied to
Question~\ref{orbit} settles it for general cubic surfaces---the
degree of the orbit closure of a general cubic surface is $96120$.

Question~\ref{orbit} appears as one of the ``twenty-seven questions
about the cubic surface'' by Ranestad and Sturmfels \cite{ran.stu:20}.
In 2019, Brustenga i Moncus\'i, Timme, and Weinstein gave a
``numerical proof'' for the number 96120. In 2020, Cazzador and
Skauli outlined an approach towards a proof by following the
techniques of Aluffi and Faber \cite{bru-i-mon.tim.wei:20,caz.ska:20}.
Our approach is different.

\subsection{The equivariant class of the orbit closure}
Our key idea is to consider not just the degree of the orbit closure,
but the class of the orbit closure in the equivariant Chow ring
$A^\bullet_{\GL V}(\Sym^dV^\vee)$.
The equivariant class subsumes the
degree, but, counter-intuitively, it is easier to find.

Let us explain how the equivariant class yields the degree.
We have natural maps
\[
\begin{tikzcd}
  A^\bullet_{\GL V}(\Sym^dV^\vee) \ar{r}& A^\bullet_{\GL V}(\Sym^d V^\vee \setminus
  \{0\}) \otimes \Q \ar[equal]{d}\\
  &A^\bullet_{\PGL V}(\P \Sym^dV^\vee) \otimes \Q \ar{r}&
  A^\bullet(\P \Sym^dV^\vee) \otimes \Q.
\end{tikzcd}
\]
Under the composite map, the class of $\Orb(F) \in A^\bullet_{\GL
  V}(\Sym^dV^\vee)$ is mapped to the class of $\Orb([F]) \in
A^\bullet(\P \Sym^dV^\vee) \otimes \Q$, which is simply the degree of $\Orb([F])$.

Let us explain what makes it possible to find the equivariant
class.  The ring $A^\bullet_{\GL V}(\Sym^dV^\vee)$ is the polynomial ring
in variables $c_1, \dots, c_r$, where $r = \dim V$, and the variable
$c_i$ has degree $i$.  The equivariant class of $\Orb(F)$ lies in the
graded component $A^m(\Sym^dV^\vee)$, where
$m = \dim \Sym^d V^\vee - \dim \GL V$.  As a result, it can be expressed as
a $\Z$-linear combination of monomials in $c_i$ of total degree
$m$. We can now use the method of undetermined coefficients.  We
construct maps $B \to [\Sym^dV^\vee/\GL V]$ where the pull-back of
$\Orb(F)$ as well as the classes $c_i$ can be explicitly
computed. Every such family gives a linear relationship between the
coefficients in the expression of $\Orb(F)$, which are to be determined.
If we have enough families, we get enough information to find all the coefficients.
For cubic surfaces, we construct more than enough families---there are 5
coefficients to be determined and we construct 8 families. The redundancy
serves as an extra check on the result.
\begin{theorem}\label{thm:eqvclass2}
  Let $V$ be a 4-dimensional vector space and let $F$ be a general
  element of $\Sym^3 V^\vee$.
  The equivariant class of the $\GL(V)$-orbit closure $\Orb(F)$ of $F$
  in $A^\bullet_{\GL V}(\Sym^3 V^\vee)$ is given by
  \[
    [\Orb F] = 24c_1^4 + 12c_1^2c_2 - 6c_1c_3 + 9c_4,
  \]
  where the \(c_i = c_i(V)\) are the Chern classes of the tautological vector
  bundle $V$.
\end{theorem}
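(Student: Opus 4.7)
The codimension of $\Orb F$ in $\Sym^3 V^\vee$ equals $\dim \Sym^3 V^\vee - \dim \GL V = 20 - 16 = 4$, so $[\Orb F]$ lies in the degree-$4$ graded piece of $A^\bullet_{\GL V}(\Sym^3 V^\vee) \cong \Z[c_1,c_2,c_3,c_4]$, which has the $\Z$-basis $\{c_1^4,\; c_1^2 c_2,\; c_2^2,\; c_1 c_3,\; c_4\}$. Following the strategy described in the introduction, my plan is to write
\[
[\Orb F] = a_1 c_1^4 + a_2 c_1^2 c_2 + a_3 c_2^2 + a_4 c_1 c_3 + a_5 c_4
\]
with five integer unknowns, and to pin down the $a_i$ by the method of undetermined coefficients.

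The procedure is as follows. For each test family $\pi \colon \cX \to B$ of cubic surfaces over a proper $4$-dimensional base $B$, I record the classifying morphism $\phi \colon B \to [\Sym^3 V^\vee / \GL V]$ and compute two things. First, I compute the integers $\int_B \phi^*(c_1^{e_1} c_2^{e_2} c_3^{e_3} c_4^{e_4})$ for each of the five basis monomials of degree $4$. Second, I compute the intersection number $\int_B \phi^*[\Orb F]$, which geometrically is the number of fibers of $\pi$ that are $\GL V$-equivalent to a fixed generic $F$, counted with scheme-theoretic multiplicity. Substituting the ansatz into the second computation produces one linear equation in $a_1,\dots,a_5$ per family. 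Five linearly independent such equations determine all the $a_i$; the additional families described in the introduction then serve as independent consistency checks.

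The main obstacle is assembling a list of test families that are simultaneously explicit enough for both computations to be carried out and varied enough to yield five linearly independent relations. Natural candidates coincide with the geometric questions posed in the introduction: the universal hyperplane section of a generic cubic $3$-fold parametrized by $(\P^4)^\vee$; the universal cubic surface over a generic $\P^4 \subset \P\Sym^3 V^\vee$; and families of blowups of six moving points on a cubic plane curve. To vary $c_1(V)$ effectively one may twist by auxiliary line bundles on the base. It is especially useful to include ``vanishing'' families in which no fiber is a generic cubic (for instance, families of reducible cubics, or families each of whose fibers has a marked singularity), since these yield particularly clean equations of the form $0 = \sum a_i N_i$ with $N_i$ a straightforward intersection number on $B$.

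Once five linearly independent relations among $a_1,\dots,a_5$ are in hand, routine linear algebra yields $a_1 = 24$, $a_2 = 12$, $a_3 = 0$, $a_4 = -6$, $a_5 = 9$, proving the theorem. I expect the hardest step to be the enumerative analysis of each individual test family: identifying its classifying data on $[\Sym^3 V^\vee / \GL V]$, pulling back the Chern classes $c_i$, and counting fibers $\GL V$-equivalent to a generic $F$ with correct multiplicities near the degenerate or non-generic loci. Organizing these computations uniformly across the eight families, and arranging the vanishing families so that the corresponding equations are clean, is where the substantive work lies.
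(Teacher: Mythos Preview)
Your plan is exactly the paper's strategy: undetermined coefficients via test families, with isotrivial ``vanishing'' families supplying homogeneous relations. Two points deserve comment. First, the paper does not pull back $c_i(V)$ directly: a family $\pi\colon\cX\to B$ of cubic surfaces canonically produces $\mathcal V=\pi_*\omega_\pi^{-1}$, not $V$, so the paper first determines the class as $1080(v_1^2v_2-v_1v_3+9v_4)$ in the $v_i=c_i(\mathcal V)$ and only then converts to $c_i(V)$ via the tautological family on $[\Sym^3 V^\vee\setminus\{0\}/\GL V]$, where one checks $\mathcal V\cong V^\vee\otimes\det V$. Second, your ``vanishing'' relations require that no fiber lie in $\overline{\Orb F}$, not merely outside the open orbit; proving this---the orbit closure problem---is the hardest part of the paper and occupies all of \autoref{sec:orbclosure}. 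Your specific suggestion of families of reducible cubics would almost certainly fail this test, and even the four carefully chosen irreducible cubics with $\G_m$-action that the paper uses (with singularities $3A_2$, $A_3+2A_1$, $A_4+A_1$, $D_4$) require nontrivial arguments (variation of GIT on an auxiliary product space, or reduction to the orbit-closure problem for six points in $\P^2$) to certify that they lie outside the generic orbit closure.
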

The formula in \autoref{thm:eqvclass2} appears to be different from the one
in \autoref{theorem:main}, but they are related by a simple change of
variables; see \autoref{proof:eqvclass2}.

Most of the paper is devoted to constructing {\sl provably} good
families of cubic surfaces on which we can compute $\deg \mu$ and the
Chern classes $c_i$.  Proofs of goodness are sometimes challenging---
they require solving instances of the problem of determining which
homogeneous forms lie in the orbit closure of which other forms.
To give a teaser for our test families, we list their
bases $B$ here: $\P^4$, $\overline M_{0,7}$, a
Hassett-weighted variant of $\overline M_{0,7}$, a blow up of the
Hilbert scheme of two points on a quintic del Pezzo surface, and last
but not least, the stack $B\G_m$.

The method of undetermined coefficients applies to hypersurfaces of
any degree and dimension.  The obstacle, in general, is in
constructing enough families.  Our constructions crucially use the
beautiful but specific geometry cubic surfaces.

\subsection{The orbit closure problem}
To apply the method of undetermined coefficients, we have to ensure that no fibers of our family lie in the orbit closure of generic cubic surfaces (otherwise, we get excess intersection).
Thus, we have to prove statements of the form ``a given cubic surface $S$ is not in the closure of a suitably generic cubic surface.''
This is, in general, a difficult problem.
We develop a range of tools to address problems of this kind.
To preserve the flow of the paper, and because these techniques may be of independent interest, we assemble these tools in a separate section (\autoref{sec:orbclosure}).

\subsection{Further remarks}
The formula in \autoref{theorem:main} has some curious consequences.
\begin{corollary}
  \label{cor:ineq} If $\pi \colon \cX \to B$ is a good family of cubic
  surfaces parametrized by a $4$ dimensional proper variety $B$,
  then
  \[\int_{B} v_{1}^{2}v_{2} - v_{1}v_3 + 9v_{4} \geq 0\]
  with
  equality holding if and only if a general cubic surface does not
  arise as a fiber.
\end{corollary}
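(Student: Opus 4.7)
The plan is to deduce the corollary directly from \autoref{theorem:main}, which identifies
\[1080 \int_B \bigl(v_1^2 v_2 - v_1 v_3 + 9 v_4\bigr) = \deg \mu,\]
where $\mu \colon B \dashrightarrow \M$ is the induced moduli map. Since $\deg \mu$ is by convention a nonnegative integer---positive precisely when $\mu$ is dominant and zero otherwise---the inequality follows at once.

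To pin down the equality case I would argue as follows. Both $B$ and $\M$ have dimension $4$, and $B$ is proper, so $\mu$ is dominant if and only if it is generically finite of positive degree, which in turn is equivalent to a general point $[S] \in \M$ lying in the image of $\mu$. Unraveling the definition of the moduli map, this last condition says precisely that a general cubic surface $S$ (up to isomorphism) appears as some fiber $\cX_b$. Hence $\deg \mu = 0$ is equivalent to a general cubic surface not arising as a fiber of $\pi$, which gives the equality statement of the corollary.

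There is essentially no substantive obstacle here: the corollary is just \autoref{theorem:main} combined with the standard convention that the degree of a rational map is zero when it fails to be dominant. The only mild subtlety is to make sure this convention is in force, so that the formula of \autoref{theorem:main} applies uniformly in both the dominant and non-dominant cases; with that in hand, both the inequality and the characterization of equality are immediate.
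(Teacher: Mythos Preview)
Your proof is correct and matches the paper's intended argument: the corollary is presented immediately after \autoref{theorem:main} as a ``curious consequence'' with no separate proof, so the paper is implicitly invoking exactly the reasoning you give---apply the formula, use that the degree of a rational map is a nonnegative integer which vanishes precisely when the map fails to be dominant, and translate dominance into the statement about fibers.

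One small technical point worth noting: \autoref{theorem:main} carries the hypothesis that a general fiber of $\pi$ is semistable (needed so that the rational map $\mu$ to the GIT quotient $\mathcal M$ is defined on a dense open), whereas the corollary as stated does not. If the general fiber is \emph{not} semistable then no fiber is smooth (smoothness being open), so no fiber is isomorphic to a general cubic surface; combining goodness of the family with \autoref{thm:eqvclass}, the pullback of $[\Orb(X)]$ to $B$ vanishes for generic $X$ and hence the integral is zero. This disposes of the residual case and makes the corollary hold as stated. Both you and the paper gloss over this, but it is easily handled and does not affect the argument.
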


\begin{corollary}
  \label{cor:div} Let $\pi \colon \cX \to B$ is a good family of cubic
  surfaces parametrized by a $4$ dimensional proper variety $B$.
  Assume that a general fiber of $\pi$ is smooth and the induced map
  $\mu \colon B \dashrightarrow \mathcal M$ is dominant.
  Then the degree of $\mu$ is divisible by 1080.
\end{corollary}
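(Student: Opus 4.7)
The plan is to deduce the corollary directly from Theorem~\ref{theorem:main}: once the main formula is available, divisibility of $\deg\mu$ by $1080$ reduces to the observation that the Chern-number expression on the right-hand side is an integer.

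I would begin by verifying that the hypotheses of Theorem~\ref{theorem:main} are satisfied. Smoothness of a general fibre of $\pi$ implies that it is GIT-stable, in particular semistable, so \autoref{theorem:main} applies and yields
\[
\deg\mu = 1080 \int_B \bigl(v_1^2 v_2 - v_1 v_3 + 9 v_4\bigr).
\]
The dominance assumption guarantees that $\deg\mu$ is itself a positive integer: a general cubic surface has trivial automorphism group, so $\M$ is a variety near its generic point, and a dominant rational map between integral varieties of equal dimension has a well-defined positive integer degree.

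Next I would show that the integral on the right-hand side is an integer. Since $\pi$ is a good family, the rank-$4$ pushforward $\V = \pi_*(\omega_\pi^{-1})$ is a genuine vector bundle on $B$, so the classes $v_i = c_i(\V) \in A^i(B)$ live in the integral Chow groups. The polynomial $v_1^2 v_2 - v_1 v_3 + 9 v_4$ therefore defines a class in $A^4(B)$; because $B$ is proper of dimension $4$, evaluating $\int_B$ on this class computes the degree of a zero-cycle, hence an integer $N \in \Z$. Substituting gives $\deg\mu = 1080 N$, which is the asserted divisibility.

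There is essentially no substantive obstacle: the corollary is a one-line consequence of Theorem~\ref{theorem:main} once one remembers that Chern numbers of vector bundles on proper schemes are always integers. The only point requiring a small amount of care is confirming that $\deg\mu$ is a genuine integer rather than a rational number arising from residual stabilizers on $\M$, but this is handled by the smoothness of the generic fibre together with the dominance of $\mu$.
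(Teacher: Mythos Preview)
Your proposal is correct and matches the paper's approach: the corollary is stated without a separate proof in the paper, as it follows immediately from \autoref{theorem:main} together with the integrality of Chern numbers on a proper base.
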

In particular, a parameter space for cubic surfaces that carries a
good universal family must cover $\mathcal M$ by a map of degree
divisible by 1080.  Interestingly, the number 1080 is also the least
common multiple of the orders of the finite groups that arise as
automorphism groups of cubic surfaces.
We wonder if $1080$ is the minimal positive integer realizable as $\deg \mu$ as we vary
across all good families of cubic surfaces.

\subsection{Future directions}

\autoref{thm:eqvclass2} is only the beginning.  Each {\sl particular}
cubic surface $F=0$ registers its own equivariant orbit class
$[\Orb(F)]$.  A refined study of the methods developed in this paper
may afford us a complete understanding of how this class changes
according to special changes in the geometry of $F=0$, e.g. acquiring
singularities or Eckardt points.  This is something we hope to pursue
in the future.

In a different direction, there should be analogous formulas counting
Del Pezzo surfaces of degrees $1,2$ and $4$.  We fully expect our
methods to establish formulas in these settings, though we have not
even begun the analysis.

Finally, we repeat a comment of Aluffi and Faber \cite{alu.fab:93}: this paper computes the equivariant orbit class of a general vector in \emph{one} representation of \emph{one} group.  A whole lot of work needs to be done if we want to solve the problem in general.

\subsection{Organization}
In \autoref{sec:good}, we recall standard facts about cubic surfaces and their moduli.
We also review the basic notions of equivariant Chow theory.
The main new ingredient in this section is the definition of a good family (\autoref{def:goodfamily}).
In \autoref{sec:testfamilies}, we construct a number of test families of cubic surfaces and evaluate the linear relation on the coefficients of the class of the orbit closure given by each family.
For each family, we first give an informal description.
The first four families, taken up in the first four subsections, use variations on the theme of blowing up 6 points in $\P^2$.
The next four families, taken up in the fifth subsection, are isotrivial; their base space is the stack $B\G_m$.
Altogether, we produce 8 families for 5 undetermined coefficients.
In \autoref{sec:tietogether}, we tie together the enumerative results and obtain the main theorems stated in the introduction.
The last section, \autoref{sec:orbclosure} is devoted to developing methods to solve cases of the orbit closure problem.

\subsection{Acknowledgements}
We thank Radu Laza, who asked Question~\ref{slice} to the first two
authors during a meeting at Oberwolfach in the summer of 2016.  We
thank Hunter Spink for important discussions during the infancy
of the project.  We thank Paolo Aluffi for quickly providing an
enumerative reference on cuspidal cubic curves and Jarod Alper for discussions on equivariant geometry.


\section{Preliminaries}
\label{sec:good}

In this chapter, we introduce the notion of a good family of cubic
surfaces (\autoref{def:goodfamily}) and provide a tool
(\autoref{prop:good}) to verify that a family is good.  Before we do
so, we first establish notation and recall some facts about cubic surfaces and their moduli.
\subsection{Notation and conventions}
\label{sec:notation-conventions}
Fix an algebraically closed field $\k$ of characteristic zero.  In
this paper, a `scheme' means `a scheme of finite type over $\k$' and a
`point' means a `$\k$-point', unless explicitly stated otherwise.  We
do not distinguish between a vector bundle and the associated locally
free sheaf of its sections.

Given a map of schemes $\pi \colon X \to Y$ and a point $y \in Y$, we
let $X_y$ denote the fiber scheme $\pi^{-1}(y)$.  If $\pi$ is proper
and Gorenstein, we write $\omega_{X/Y}$ or $\omega_\pi$ for the
dualizing line bundle.  If $X$ and $Y$ are themselves Gorenstein and
$\pi$ is flat, then we have
$\omega_{X/Y} = \omega_X \otimes \pi^*\omega_Y^{-1}$.

For a closed subscheme $Y \subset X$, the notation $\mathcal I_Y \subset \O_X$ denotes the ideal sheaf of $Y \subset X$ and if $Y$ and $X$ are smooth, then $N_{Y/X}$ denotes the normal bundle of $Y \subset X$.

Given a vector bundle $\mathcal W$, we let $\P \mathcal W$ be its
projectivization.  Contrary to Grothendieck's convention,
$\P \mathcal W$ parametrizes $1$-dimensional subspaces in the fibers
of $\mathcal W$.  In other words, we set
\[ \P \mathcal W = \Proj \Sym(\mathcal W^\vee).\] As a result,
$\P \mathcal W$ comes equipped with a line bundle $\O(1)$ whose
push-forward to the base is $\mathcal W^\vee$.

Let $Q \subset \P V$ be a non-degenerate quadric.  The bilinear form
associated to $Q$ gives an isomorphism of $\P V$ with the dual
projective space $\P V^\vee$.  Given a point $p \in \P V$, we denote
by $\Polar_Q(p) \subset \P V$ the hyperplane dual to $p$ with respect to
$Q$. Similarly, given a hyperplane $H \subset \P V$, we denote by
$\Pole_Q(H) \in \P V$ the point dual to $H$ with respect to $Q$.  In
almost all cases, we use these constructions when $\P V$ is $\P^2$.

\subsection{Moduli of cubic surfaces}
\label{sec:classical-facts}
Let $\pi \colon \cX \to B$ be a family of cubic surfaces.  That is,
let $\pi$ be a flat, proper morphism whose fibers are isomorphic to
cubic hypersurfaces in $\P^3$.  Since $\pi$ is flat and its fibers are
Gorenstein, it is a Gorenstein morphism, and hence admits a dualizing
line bundle $\omega_\pi$.  By the adjunction formula, it follows that
the restriction of $\omega_{\pi}^{-1}$ to every fiber is isomorphic to
the restriction of $\O(1)$ with respect to an embedding of the fiber
as a cubic hypersurface in $\P^3$.  By standard theorems about
cohomology and base-change, it follows that
$R^i\pi_* \left(\omega_{\pi}^{-1}\right)$ vanishes for $i > 0$, and is
a vector bundle for $i = 0$.  Set
\[ \mathcal V = \pi_* \left( \omega_\pi^{-1} \right).\]
The natural map 
\[ \pi^*\mathcal V \to \omega_{\pi}^{-1}\]
yields a closed embedding
\begin{equation}\label{eqn:relcanx}
  \cX \subset \P \mathcal V^\vee.
\end{equation}
We know that
\[ \omega_{\P \mathcal V^\vee/B} = \O(-4) \otimes \det \mathcal V^\vee,\]
and hence, by adjunction,
\[ \omega_{\cX/B}= \O(-4) \otimes \O(\cX) \otimes \det \mathcal V^\vee |_\cX.\]
By construction, we also have
\[ \omega_{\cX/B} = \O(-1)|_\cX,\]
and hence we obtain that
\begin{equation}\label{eqn:ox}
  \O(\cX) = \O(3) \otimes \det \mathcal V.
\end{equation}
In other words, $\cX \subset \P \mathcal V^\vee$ is the zero locus of
a section of the line bundle $\O(3) \otimes \det \mathcal V^\vee$.

Consider the category $\mathscr {C}$ fibered in groupoids over the
category of schemes whose objects over a scheme $B$ are families of
cubic surfaces $\pi \colon \cX \to B$, and whose morphisms are pull-back
diagrams.  Consider the category $\mathscr {M}$ fibered in groupoids
over the category of schemes whose objects over a scheme $B$ are
vector bundles $\mathcal V$ over $B$ of rank 4 along with a nowhere
zero section $\xi$ of
$\Sym^3 \mathcal V^\vee \otimes \det \mathcal V$, and whose morphisms
are pull-back diagrams.  Let $V$ be a $\k$-vector space of dimension
4.  Note that $\mathscr{M}$ is simply the quotient stack
\[\mathscr{M} = \left[\Sym^3 V^\vee \otimes \det V \setminus \{0\} / \GL V \right]. \]
\begin{proposition}\label{prop:cubicstack}
  The categories $\mathscr{C}$ and $\mathscr{M}$ are equivalent as categories fibered over ${\rm Schemes}_k$.
\end{proposition}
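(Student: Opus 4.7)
The plan is to construct mutually quasi-inverse functors $F \colon \mathscr{C} \to \mathscr{M}$ and $G \colon \mathscr{M} \to \mathscr{C}$; most of the work has already been carried out in the paragraphs preceding the statement. For $F$, given a family $\pi \colon \cX \to B$, set $\mathcal V = \pi_* \omega_\pi^{-1}$, which is locally free of rank $4$ by the cohomology and base-change argument already assembled. The closed immersion $\cX \subset \P \mathcal V^\vee$ exhibits $\cX$ as the vanishing locus of a section of $\O(3) \otimes \det \mathcal V$ on $\P \mathcal V^\vee$, and pushing forward along $\P \mathcal V^\vee \to B$ identifies this datum with a section $\xi$ of $\Sym^3 \mathcal V^\vee \otimes \det \mathcal V$ on $B$. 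The section $\xi$ is nowhere vanishing because it restricts fiberwise to a nonzero cubic form. Setting $F(\pi) = (\mathcal V, \xi)$ is functorial, since all of the ingredients commute with arbitrary base change.

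For $G$, given $(\mathcal V, \xi)$ with $\xi$ nowhere zero, reinterpret $\xi$ as a section of $\O(3) \otimes \det \mathcal V$ on $\P \mathcal V^\vee \to B$ and let $\cX \subset \P \mathcal V^\vee$ be its zero locus. On each geometric fiber, $\xi$ restricts to a nonzero cubic form on $\P^3$, so each $\cX_b$ is a cubic hypersurface; constancy of Hilbert polynomials then forces $\pi \colon \cX \to B$ to be flat, and thus a family of cubic surfaces. The natural isomorphism $GF \simeq \mathrm{id}_\mathscr{C}$ is essentially tautological: the embedding $\cX \subset \P \mathcal V^\vee$ is literally the one that $G$ reconstructs from $\xi$.

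The substantive check is $FG \simeq \mathrm{id}_\mathscr{M}$. Starting with $(\mathcal V, \xi)$ and letting $\cX$ be the zero locus, I would produce a canonical isomorphism $\pi_* \omega_{\cX/B}^{-1} \cong \mathcal V$ by pushing forward the Koszul sequence
\[
0 \to \O_{\P \mathcal V^\vee}(-3) \otimes \det \mathcal V^\vee \to \O_{\P \mathcal V^\vee} \to \O_\cX \to 0
\]
twisted by $\O(1)$: the pushforward of the left term vanishes in both degrees $0$ and $1$, and the adjunction identification $\omega_{\cX/B}^{-1} \cong \O(1)|_\cX$ then gives $\pi_* \omega_{\cX/B}^{-1} \cong \pi_* \O_{\P \mathcal V^\vee}(1) \cong \mathcal V$. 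Under this canonical identification, the section recovered by applying $F$ coincides with $\xi$, because both cut out the same subscheme and the scalar ambiguity is pinned down by the isomorphism. The quotient-stack description $\mathscr{M} = [\Sym^3 V^\vee \otimes \det V \setminus \{0\} / \GL V]$ is then formal: specifying $(\mathcal V, \xi)$ is equivalent to specifying a principal $\GL V$-bundle on $B$ together with a $\GL V$-equivariant map to $\Sym^3 V^\vee \otimes \det V \setminus \{0\}$.

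The main technical obstacle is the bookkeeping of the $\det \mathcal V$ twist in the definition of $\mathscr{M}$. One has to verify that this twist is precisely the one that makes the central $\Gm \subset \GL V$ act trivially on the representation $\Sym^3 V^\vee \otimes \det V$, so that the scalar indeterminacy of the defining cubic form is absorbed exactly by the scalar indeterminacy in the identification $V \cong \pi_* \omega_\pi^{-1}$. Without the correct twist the construction would only yield an equivalence up to a $\Gm$-gerbe, so this normalization is what makes \autoref{prop:cubicstack} an equivalence on the nose.
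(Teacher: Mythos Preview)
Your construction of $F$ and $G$ is exactly the paper's approach, and your Koszul-sequence verification of $FG \simeq \mathrm{id}_{\mathscr M}$ is a welcome addition that the paper leaves implicit.

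However, the final paragraph contains a genuine error of interpretation. The central $\G_m \subset \GL V$ does \emph{not} act trivially on $\Sym^3 V^\vee \otimes \det V$: it acts with weight $-3 + 4 = 1$ (the paper records this as weight $-1$, a harmless convention difference). If the action were trivial, then scaling $\mathcal V$ by a unit would leave $\xi$ unchanged, and you would have an extra $\G_m$ of automorphisms---precisely the gerbe you claim to be avoiding. What actually happens is the opposite: because the weight is $\pm 1$, the central $\G_m$ acts \emph{freely} on the nonzero locus, so the scalar ambiguity in the defining section $\xi$ is absorbed by scaling $\mathcal V$. This is what lets the paper rewrite $\mathscr M$ as $[\P \Sym^3 V^\vee / \PGL V]$.

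More fundamentally, the twist by $\det \mathcal V$ is not a normalization choice at all; it is forced by the geometry. The computation just before the proposition (culminating in \eqref{eqn:ox}) shows that the divisor class of $\cX$ in $\P \mathcal V^\vee$ is $\O(3) \otimes \det \mathcal V$, so the defining section canonically lives in $\Sym^3 \mathcal V^\vee \otimes \det \mathcal V$, not in $\Sym^3 \mathcal V^\vee$. Your proof stands once you delete or correct that last paragraph.
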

\begin{proof}
  Let $\pi \colon \cX \to B$ be a family of cubic surfaces, and set
  $\mathcal V = \pi_* \left( \omega_{\pi}^{-1} \right)$.  Consider the
  relative canonical embedding $\cX \subset \P \mathcal V^\vee$.  In
  this embedding, $\cX$ is cut out by a section of
  $\O(3) \otimes \det \mathcal V$ on $\P \mathcal V^\vee$, or
  equivalently, by a section of $\xi$ of
  $\Sym^3V^\vee \otimes \det \mathcal V$ on $B$.  Since $\pi$ is flat,
  $\xi$ is nowhere zero on $B$.  We thus have a natural transformation
  from $\mathscr{C}$ to $\mathscr{M}$.

  To go from $\mathscr{M}$ to $\mathscr{C}$, given a rank 4 bundle $\mathcal V$ and a nowhere section $\xi$ of $\Sym^3 \mathcal V^\vee \otimes \det \mathcal V$, define $\cX \subset \P \mathcal V^\vee$ as the zero-locus of the corresponding section of $\O(3) \otimes \det \mathcal V$.
\end{proof}

We call $\mathscr{M}$ the \emph{moduli stack of cubic surfaces}.
Sometimes, it will be convenient to add in the zero section and consider the bigger stack
\[ \mathscr{M}^* = \left[\Sym^3 V^\vee \otimes \det V / \GL V \right].\]
Note that the diagonal $\G_m \subset \GL V$ acts by weight $-1$ on $\Sym^3 V^\vee \otimes \det V$.
In particular, its action is free.
Therefore, the stack $\mathscr{M}$ is isomorphic to the quotient stack
\[ \mathscr{M} = \left[ \P \Sym^3V^\vee / \PGL V \right].\]

We let $\mathcal M$ denote the GIT quotient of $\P \Sym^3V^\vee$ by
the action of $\PGL V$.  The semistable and stable locus of the
action were computed by Mumford; see \cite[Chapter 4, \S
2]{mum.fog.kir:94} or \cite[1.14]{mum:77}.  The stable locus consists
of surfaces with at worst $A_1$ singularities, and the semistable
locus includes surfaces with at worst $A_2$ singularities.  The ring
of invariants was computed much earlier, in the 1860s, by Salmon
\cite{sal:60} and Clebsch \cite{cle:61,cle:61*1}, and later by
Beklemishev \cite{bek:82} using more modern methods.  The invariant
ring is freely generated by polynomials of degree 8, 16, 24, 32, and
40, and as a result, the coarse moduli space $\mathcal M$ is
isomorphic to the weighted projective space $\P(1,2,3,4,5)$.

We recall perhaps the most famous fact about cubic surfaces, discovered in 1849. 
\begin{theorem}[Cayley \cite{cay:49}, Salmon \cite{sal:49}]
  \label{theorem:cayleysalmon}
  Every smooth cubic surface contains exactly $27$ lines.
\end{theorem}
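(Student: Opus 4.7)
The plan is to use the standard incidence correspondence on the Grassmannian $\Gr(2,4)$ of lines in $\P^3$. Let $\mathcal S$ denote the tautological rank $2$ subbundle. A cubic $F \in \Sym^3 V^\vee$ vanishes on a line $\ell$ precisely when its image under the natural restriction $\Sym^3 V^\vee \to \Sym^3 \mathcal S^\vee|_\ell$ vanishes. Globalizing over $\Gr(2,4)$, a cubic $F$ produces a section $\sigma_F$ of the rank $4$ vector bundle $\Sym^3 \mathcal S^\vee$, whose zero scheme is exactly the Fano scheme of lines on $S = \{F = 0\}$.

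First, I would verify that $\sigma_F$ vanishes transversely when $S$ is smooth. For a line $\ell$ on a smooth cubic surface, the adjunction $K_S = \O_S(-1)$ together with the genus formula gives $\ell^2 = -1$, so $N_{\ell/S} \cong \O_\ell(-1)$ and $H^0(\ell, N_{\ell/S}) = 0$. Identifying $H^0(N_{\ell/S})$ with the kernel of the differential of $\sigma_F$ at $\ell$ via the standard deformation-theoretic diagram yields the transversality statement. Consequently, the Fano scheme is reduced, and the number of lines on every smooth cubic is
\[ \int_{\Gr(2,4)} c_4\bigl(\Sym^3 \mathcal S^\vee\bigr). \]

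Second, I would evaluate this Chern number by the splitting principle. Writing $a, b$ for the Chern roots of $\mathcal S^\vee$, the bundle $\Sym^3 \mathcal S^\vee$ has Chern roots $3a,\ 2a+b,\ a+2b,\ 3b$, whose product expands as
\[ 9ab(2a+b)(a+2b) = 18\bigl(a^3 b + ab^3\bigr) + 45\,a^2 b^2. \]
Setting $\sigma_1 = a+b$ and $\sigma_{1,1} = ab$ and applying the Schubert calculus identities $\int_{\Gr(2,4)} \sigma_1^2 \sigma_{1,1} = \int_{\Gr(2,4)} \sigma_{1,1}^2 = 1$, this evaluates to $27$.

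As a sanity check---and an alternative verification that bypasses the Chern class machinery---I would exhibit the lines on the Fermat cubic $x_0^3 + x_1^3 + x_2^3 + x_3^3 = 0$ explicitly. They are cut out by $x_i + \omega x_j = 0 = x_k + \omega' x_l$, indexed by the three partitions of $\{0,1,2,3\}$ into two pairs and by pairs $(\omega, \omega')$ of cube roots of unity, giving $3 \cdot 9 = 27$ distinct lines. The main obstacle is the transversality step, which is what guarantees that the number of lines is locally constant on the smooth locus; once that is in hand, either the Chern class computation or the Fermat example pins down the count as $27$.
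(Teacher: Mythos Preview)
The paper does not give a proof of this theorem; it is stated with citations to Cayley and Salmon and used as background. Your argument is the standard modern proof via the top Chern class of $\Sym^3 \mathcal S^\vee$ on $\Gr(2,4)$, and it is correct as written: the identification of the tangent space of the Fano scheme with $H^0(\ell, N_{\ell/S})$ gives transversality, and the Schubert calculus evaluation to $27$ is right. One small quibble: the Fermat example alone only exhibits $27$ lines without showing there are no others, so it is genuinely a sanity check rather than an alternative proof unless you also argue directly that those are all the lines; but since your Chern class argument already gives exactness, this does not affect the validity of your proof.
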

As we know, the 27 lines are key to understanding geometry of the
cubic surface.  Each line is a $(-1)$-curve, and hence contractible by
Castelnuovo's theorem.  A set of lines that are pairwise skew is
particularly important, because they can be blown down simultaneously.
The maximum number of such lines is 6, and they allow us to realize
the cubic surface as a blow up of $\P^2$ at 6 points.

Following Schl\"afli \cite{sch:58}, we call an ordered tuple of pairwise disjoint six lines $(\ell_1, \dots, \ell_6)$ on a cubic surface a \emph{six}.
A \emph{double six} is an unordered pair of sixes $\{(\ell_1, \dots, \ell_6), (\ell_1', \dots, \ell_6')\}$ such that
\begin{itemize}
\item $\ell_{i}$ and $\ell'_{i}$ are disjoint for all $i$, and
\item $\ell_{i}$ and $\ell'_{j}$ meet at a point whenever $i \neq j$.
\end{itemize}

\begin{theorem}[Schl\"{a}fli \cite{sch:58}]
  \label{theorem:schlafli} Each six on a smooth cubic surface has a unique extension to a double-six, and every smooth cubic surface contains $36$ double-sixes.
\end{theorem}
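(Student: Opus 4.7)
The plan is to realize $S$ as a blowup of $\P^2$ using the given six as the exceptional locus, and then construct the complementary six explicitly as strict transforms of conics.

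Since $(\ell_1, \ldots, \ell_6)$ are pairwise disjoint $(-1)$-curves on the smooth cubic $S$, Castelnuovo's theorem permits their simultaneous contraction to a smooth surface $\bar S$ with $K_{\bar S}^2 = K_S^2 + 6 = 9$. The irregularity $q$ and geometric genus $p_g$ vanish for $S$ (since $\omega_S = \O_S(-1)$) and are birational invariants between smooth surfaces, so $\bar S$ is rational by Castelnuovo's criterion. A rational surface with $K^2 = 9$ is isomorphic to $\P^2$, so $S = \Bl_{p_1, \ldots, p_6}\P^2$ with $\ell_i = E_i$, and the smoothness of $S$ forces the $p_i$ to be in general position (no three collinear, no six on a conic). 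An intersection-theoretic enumeration of $(-1)$-curves in $\Pic(S) = \Z H \oplus \bigoplus \Z E_i$ then yields the classical description of the 27 lines: the six $E_i$, the fifteen $L_{ij} = H - E_i - E_j$ (strict transforms of lines $\overline{p_i p_j}$), and the six $C_i = 2H - \sum_{j \neq i} E_j$ (strict transforms of the unique plane conic through $\{p_j : j \neq i\}$).

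Setting $\ell_i' := C_i$ and computing intersections verifies the double-six axioms: $C_i \cdot E_i = 0$, $C_i \cdot E_j = 1$ for $j \neq i$, and $C_i \cdot C_j = 0$ for $i \neq j$. For uniqueness, any candidate $\ell_i'$ must be one of the 27 lines that is disjoint from $E_i$ but meets every other $E_j$; inspecting the three classes of lines eliminates all except $C_i$. For the global count, a direct enumeration shows that $S$ carries exactly 72 unordered sets of six pairwise disjoint lines (equivalently, $|W(E_6)|/|S_6| = 51840/720 = 72$, since $W(E_6)$ acts transitively on such sets with stabilizer $S_6$). Uniqueness of the complementary six implies that each unordered six belongs to a unique double-six and each double-six contains exactly two unordered sixes, giving a total of $72/2 = 36$ double-sixes.

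The main obstacle lies in the first step: rigorously verifying that every smooth cubic admits a blowdown to $\P^2$ along any prescribed six. This demands Castelnuovo's rationality criterion, the classification of minimal rational surfaces, and the characterization $K^2 = 9 \Rightarrow \bar S \cong \P^2$ among rational surfaces—all standard but nontrivial inputs, without which the explicit line-by-line intersection calculations for existence and uniqueness cannot be carried out. The counting step, by contrast, reduces to either a routine combinatorial case analysis in the blowup model or an appeal to the well-known orbit structure of $W(E_6)$ on the configuration of 27 lines.
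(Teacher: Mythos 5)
The paper states this result without proof, attributing it to Schl\"afli; there is no argument in the text to compare against. Your proposal follows the standard modern route via the blow-down model and is essentially correct, but there is one small imprecision in the first step. Knowing $q(\bar S) = p_g(\bar S) = 0$ does not by itself give rationality of $\bar S$: Enriques surfaces satisfy $q = p_g = 0$, and fake projective planes even satisfy $q = p_g = 0$ with $K^2 = 9$. Castelnuovo's criterion requires $q = P_2 = 0$ with $P_2 = h^0(2K)$. The fix is immediate --- since $\omega_S \cong \O_S(-1)$ is anti-ample, $P_2(S) = h^0(\O_S(-2)) = 0$, and $P_2$ is a birational invariant, so $P_2(\bar S) = 0$ as well. (Alternatively, sidestep Castelnuovo entirely: a smooth cubic surface is rational by projection from one of its lines, and $\bar S$, being birational to $S$, inherits rationality.) Given that $\bar S$ is a rational surface with $K^2 = 9$, it is automatically minimal, for otherwise one could contract a $(-1)$-curve to obtain a rational surface with $K^2 = 10$, exceeding the bound $K^2 \leq 9$ attained only by $\P^2$ among minimal rational surfaces; hence $\bar S \cong \P^2$. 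Everything after that --- general position of the six base points from smoothness of $S$, the enumeration of the $27$ lines into $E_i$, $L_{ij}$, $C_i$, the intersection computations verifying $(C_1, \dots, C_6)$ is the complementary six, the case-by-case uniqueness argument, and the count $72/2 = 36$ using the transitive $W(E_6)$-action on unordered sixes --- is correct and is the standard argument.
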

Thus, each smooth cubic surface admits 72 distinct maps to
$\P^2$, up to automorphisms of $\P^2$, each of which
blows down 6 lines.

The sixes on a cubic surface allow us to relate $\mathcal M$ to a more
familiar moduli space.  Let $\mathcal M^\dagger$ denote the moduli
space of pairs $(S, \ell)$, where $S$ is a smooth cubic surface and
$\ell$ is a six on $S$.  By \autoref{theorem:schlafli}, the natural
forgetful map
\begin{align*}
  \sigma \colon \mathcal M^\dagger &\to \mathcal M \\
  (S, \ell) &\mapsto S
\end{align*}
is dominant and quasi-finite of degree
\[ \deg \sigma = 72 \times 6! = 51840.\] On the other hand, the space
$\mathcal M^\dagger$ is isomorphic to the configuration space of 6
ordered points on $\P^2$
\[ \mathcal M^\dagger \xrightarrow{\sim} \left( (\P^2)^6 \setminus
    \text{Diagonals} \right) / \Aut(\P^2).\] The isomorphism sends
$(S, (\ell_i))$ to $(\P^2, (p_i))$, where $\beta \colon S \to \P^2$
blows down the lines $\ell_i$ and $p_i = \beta(\ell_i)$.

\subsection{Equivariant Chow groups and equivariant orbits}
The standard reference for equivariant Chow theory is \cite{edi.gra:98}.
We recall a few relevant notions, mostly to set notation.

Given an algebraic space $X$ with an action of an algebraic group $G$,
denote by $A^i_G(X)$ the $G$-equivariant Chow group of codimension $i$
cycles.  Given a $G$-invariant subspace $Y \subset X$ of pure
codimension $i$, denote by $[Y] \in A^i_G(X)$ the fundamental class of
$Y$.  If $Y \subset X$ has codimension $c$ and $c > i$, then the restriction map
\[ A^i_G(X) \to A^i_G(X \setminus Y)\]
is an isomorphism; in general, it is surjective with kernel is $A^{i-c}_G(Y)$.
A $G$-linearized vector bundle
$V$ on $X$ admits equivariant Chern classes $c_i(V) \in A^i_G(X)$.
The equivariant Chow groups are homotopy invariant: if $Y \to X$ is a
$G$-equivariant vector bundle, then the pull-back map
$A^i_G(X) \to A^i_G(Y)$ is an isomorphism.  If $X$ is smooth, then
$\bigoplus_i A^i_G(X) = A^*_G(X)$ is a graded ring, where the multiplication is given by the
intersection product.

The $G$-equivariant Chow groups of $X$ are the same as the Chow groups
of the quotient stack $[X/G]$:
\[ A^i\left( [X/G] \right) = A^i_G(X).\]

The equivariant Chow ring of a point with the action of $G = \GL_n$ is
the polynomial ring
\[ A_{\GL_n}(\bullet) = \Z[c_1,\dots, c_n],\] where $c_i$ has
degree $i$ and can be identified with the $i$-th Chern class of the
standard $n$-dimensional representation of $\GL_n$.  By homotopy
invariance, for any $\GL_n$ representation $W$, we have
\[ A_{\GL_n}(W) = \Z[c_1,\dots,c_n].\]
If the group under consideration is $\GL V$, where $V$ is an $n$-dimensional vector space,
then we write $v_i$ for the $i$-th Chern class of the tautological
representation $V$.  Then we have
\[ A_{\GL V}(W) = \Z[v_1,\dots,v_n].\]

Let $V$ be a $4$-dimensional vector space.
Consider the $\GL V$ representation $W = \Sym^3 V^\vee \otimes \det V$.
By \autoref{prop:cubicstack}, (non-zero) $\GL V$ orbits in $W$ correspond canonically to cubic surfaces.
Note that the action of $\GL V$ on $W$ is generically free, and hence the codimension of a generic orbit is $20-16 = 4$.
Given a point $w \in W$, let $\Orb(w)$ denote the closure of its $\GL V$ orbit, and let
\[ [\Orb(w)] \in A^4_{\GL V}(W) = A^4(\mathscr M)\]
denote the fundamental class.
By a slight abuse of notation, if $X \subset \P^3$ is a cubic surface or $F \in k[X_0, \dots, X_3]$ is a non-zero cubic form, we let $\Orb(X)$ or $\Orb(F)$ denote the closure of the orbit in $W$ of the corresponding point $w$, and $[\Orb(X)]$ or $[\Orb(F)]$ the fundamental class.
Since $\A^4(\mathscr M)$ is generated by monomials in $v_i$ of total degree 4, we have an expression
\begin{equation}\label{eqn:chernexpansion}
  [\Orb(X)] = a_{1^4} \cdot v_1^4 + a_{1^2\cdot 2} \cdot v_1^2v_2 + a_{1\cdot 3} \cdot v_1v_3 + a_{2^2} \cdot v_2^2 + a_4 \cdot v_4,
\end{equation}
where the coefficients $a_i$ are integers.
The coefficients \eqref{eqn:chernexpansion} depend on $X$.
We let
\[ \partial \Orb(X) = \Orb(X) \setminus \GL V \cdot X,\]
denote the boundary of the orbit closure.
Note that $\partial \Orb(X) \subset W$ has codimension at least $5$.

\subsection{Good families}
\label{sec:good-families}
Our strategy is to compute $[\Orb(X)]$ for a generic $X$ by pulling it back to various families $B \to \mathscr M$.
If there is a point of $B$ that lies in every $\Orb(X)$, then we have the issue of excess intersection. 
A good family is one that does not suffer from this defect.
Recall that $V$ is a $4$-dimensional $\k$-vector space.
\begin{definition}[Good family]
\label{def:goodfamily}
A family of cubic surfaces $\pi \colon \cX \to B$ is \emph{good} if
the following holds: there exists a non-empty Zariski open subset
$U \subset \Sym^3V^\vee$ such that for every $F \in U$ and $b \in B$,
the fiber $\cX_b$ does not lie in $\partial \Orb(F)$.
\end{definition}
\begin{example}[Fibers with finite automorphism groups]
  \label{ex:finaut}
  Suppose every $\cX_b$ has a finite automorphism group.  Then
  $\pi \colon \cX \to B$ is good.  Indeed, let $U$ be the subset
  consisting of elements with finite automorphism group.  For every
  $F \in U$, the boundary $\partial \Orb(F)$ consists of elements with
  a positive dimensional automorphism group, and hence it cannot
  contain $\cX_b$.
\end{example}
\begin{example}[GIT semistable fibers]
  \label{ex:semistable}
  Suppose every fiber $\cX_b$ is GIT-semistable.
  Then the family $\pi \colon \cX \to B$ is good.
  Indeed, let $U$ be the subset consisting of GIT-stable elements.
  For every $F \in U$, the orbit of $F$ is closed in the semistable locus, and hence $\partial \Orb(F)$ cannot contain $\cX_b$.
\end{example}
\begin{example}[A combination]
  Suppose every fiber $\cX_b$ has a finite automorphism group or is GIT semistable.
  Let $U$ be the subset corresponding to smooth cubic surfaces.
  Since smooth cubic surfaces have a finite automorphism group and are GIT stable, we see that $\pi \colon \mathcal X \to B$ is good.
\end{example}

\begin{proposition}\label{prop:goodisgood}
  Let $\pi \colon \cX \to B$ be a good family of cubic surfaces.
  Assume that $B$ is proper of dimension 4 and a general fiber $\cX_b$ is semistable.
  Let $\mu \colon B \to \mathscr M$ be the map induced by $\pi$ and $m \colon B \dashrightarrow \mathcal M$ the rational map to the coarse moduli space.
  Then
  \[ \deg m = \deg \mu^*\left( \Orb(F) \right)\]
  for a general cubic form $F$.
\end{proposition}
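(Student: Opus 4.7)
The plan is to lift both sides of the claimed equality to the $\GL V$-torsor over $B$ classified by $\mu$ and carry out the comparison on the total space. Set $W = \Sym^3 V^\vee \otimes \det V$, so $\mathscr{M} = [W \setminus 0 / \GL V]$. The map $\mu \colon B \to \mathscr{M}$ corresponds to a principal $\GL V$-bundle $p \colon \widetilde{B} \to B$ together with a $\GL V$-equivariant morphism $\phi \colon \widetilde{B} \to W \setminus 0$, under which $\mu^*$ is identified with $\phi^*$ via the canonical isomorphism $A^*_{\GL V}(\widetilde{B}) = A^*(B)$. The key idea will be that, for a generic cubic form $F$, the good family hypothesis allows the scheme-theoretic preimage $\phi^{-1}(\Orb(F))$ to be replaced by $\GL V \cdot \phi^{-1}(F)$—a disjoint union of free $\GL V$-orbits through the finitely many reduced points of $\phi^{-1}(F)$, which descend to reduced points of $B$ in bijection with $m^{-1}([F])$.

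I would first note the dimension match $\dim \widetilde{B} = \dim B + \dim \GL V = 20 = \dim W$, so $\phi$ is either generically finite of some degree $d$ or not dominant. In the non-dominant case, $\mu$ is also not dominant; goodness gives $\phi(\widetilde{B}) \cap \Orb(F) = \emptyset$ for a general $F$, so both $\deg m$ and $\deg \mu^*[\Orb(F)]$ vanish and the claim is trivial. Assume henceforth that $\phi$ is generically finite. By generic smoothness in characteristic zero (applied on a resolution of $\widetilde{B}$ if it is not already smooth, which does not affect degrees of zero-cycles), there exists an open $U_1 \subset W$ over which $\phi^{-1}(F)$ is a reduced zero-dimensional scheme of length $d$. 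I would then intersect $U_1$ with the open $U_2$ provided by \autoref{def:goodfamily} and with the open $U_3 \subset W$ on which $F$ has trivial $\GL V$-stabilizer, and work with a general $F \in U_1 \cap U_2 \cap U_3$.

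For such $F$, equivariance of $\phi$ together with goodness yield
\[
  \phi^{-1}(\Orb(F)) \;=\; \phi^{-1}(\GL V \cdot F) \;=\; \GL V \cdot \phi^{-1}(F),
\]
and since this intersection has the expected codimension $4$ and is reduced, $\phi^*[\Orb(F)] = [\phi^{-1}(\Orb(F))]$ holds in $A^4_{\GL V}(\widetilde{B}) = A^4(B)$. Under $p$, each free orbit $\GL V \cdot \widetilde{b}$ collapses to a single reduced point of $B$; because the stabilizer of $F$ is trivial, distinct points of $\phi^{-1}(F)$ project to distinct points of $B$, and their images are precisely the points of $m^{-1}([F])$. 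Taking degrees concludes the argument: $\deg \mu^*[\Orb(F)] = |\phi^{-1}(F)| = |m^{-1}([F])| = \deg m$.

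The main obstacle I anticipate is the reducedness of $\phi^{-1}(F)$ for a general $F$: without it, the class $\phi^*[\Orb(F)]$ could carry extra multiplicity at each orbit and inflate the apparent degree. Generic smoothness handles this cleanly in characteristic zero, and the possibility that $\widetilde{B}$ is singular is absorbed by passing to a resolution. A secondary point, also requiring goodness, is ensuring that $\Orb(F)$ contributes only through its open orbit $\GL V \cdot F$ and not through its boundary strata—which is exactly the content of \autoref{def:goodfamily}.
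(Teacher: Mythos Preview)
Your proposal is correct and follows a more concrete route than the paper. The paper argues directly on the level of the stack $\mathscr{M}$ and its coarse space: it chooses an open $W \subset \mathcal{M}$ over which $m$ is finite \'etale and $\mathscr{M}^{ss} \to \mathcal{M}$ is an isomorphism, then observes that goodness forces the fiber product $B \times_{\mathscr{M}} \overline{u}$ to coincide with $B \times_{\mathscr{M}} u$ for $u \in U$, whence $\deg \mu^*\Orb(u)$ counts exactly the points of $m^{-1}([u])$. Your argument instead unwinds the stack quotient by passing to the $\GL V$-torsor $\widetilde{B}$ and the equivariant map $\phi$ to $W$, then uses generic smoothness to control the fiber $\phi^{-1}(F)$ directly. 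Both arguments use goodness in the same essential way---to excise $\partial \Orb(F)$---but yours trades the stack/coarse-space comparison for explicit intersection theory upstairs. The paper's version is shorter and leans on the orbit-space structure of the stable locus; yours is more elementary and perhaps clearer about where reducedness and transversality enter. One small point worth tightening in your write-up: a generic cubic surface has trivial automorphism group in $\PGL V$, and since the center of $\GL V$ acts freely on $\Sym^3 V^\vee \otimes \det V$, the $\GL V$-stabilizer of a generic $F$ is indeed trivial---this is implicit in the paper's setup but deserves a sentence where you invoke it.
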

\begin{proof}
  There exists an open $W \subset \mathcal M$ such that $m^{-1} (W) \to W$ is a finite \'etale morphism.
  Then the degree of $m$ is simply the number of points in $m^{-1}(u)$ for $u \in W$.
  By shrinking $W$ if necessary, we may assume that the map
  \[ \mathscr M^{ss} \to \mathcal M\]
  is an isomorphism over $W$.

  Let $U$ be as in the definition a good family.
  Then, for every $u \in U$, we have the equality
  \[ \cX \times_{\mathscr M} u = \cX \times_{\mathscr M} \overline u.\]
  By shrinking $U$ if necessary, we may assume that $U$ consists of semistable points and maps to $W \subset \mathcal M$ in the coarse space.
  Then, for every $u \in U$, we have
  \begin{align*}
    \deg \mu^* \Orb(u) &= \# \left(\cX \times_{\mathscr M} u \right)\\
                       &= \# \left(\cX \times_{\mathcal M} u\right) \\
                       &= \deg m.
  \end{align*}
\end{proof}

\subsection{Cubic surfaces as blow-ups}
Cubic surfaces arise as blow ups of $\P^2$ at six points, or more generally, as blow ups of del Pezzo surfaces of degree $d$ in $(d-3)$ points.
Varying the center of blow up is a convenient source of families of cubic surfaces.
The following proposition identifies certain nice sub-schemes to blow up.

\begin{definition}[Admissible subscheme]
  \label{def:admissible}
  Let $S$ be a smooth del Pezzo surface of degree $d \geq 3$ and let $Z \subset S$ be a subscheme of length $(d-3)$.
  We say that $Z$ is \emph{admissible} if 
  \begin{enumerate}
  \item $Z$ is curvilinear (contained in a smooth curve),
  \item $h^{0}\left(\mathcal{I}_Z \otimes \omega_S^{-1}\right) = 4$, and
  \item $\mathcal I_Z \otimes \omega_S^{-1}$ is generated by its global sections.
  \end{enumerate}
\end{definition}
The last condition means that $Z$ is scheme-theoretically cut out by the sections of $\omega_S^{-1}$ that vanish on it.

\begin{proposition}
  \label{prop:admissibleblowup}
Let $Z \subset S$ be an admissible length $(d-3)$ subscheme.
Set $X = \Bl_ZS$.
Then the following hold.
\begin{enumerate}
\item $X$ has at worst $A_n$ singularities.
\item $H^0\left(X, \omega^{-1}_X\right)$ is $4$-dimensional and base-point free.
\item $H^i(X,\omega_X^{-n}) = 0$ for all $n \geq 0$ and $i > 0$.
\item The image of the map $X \to \P^3$ induced by $\omega_X^{-1}$ is a cubic surface $Y$; the map $X \to Y$ is birational.
\item Let $S = \bigoplus_{n \geq 0} S_n$ be the homogeneous coordinate ring of $Y$.
  The pull-back map $S_n \to H^0\left(X, \omega_X^{-n}\right)$ is an isomorphism.
\item $Y$ is normal and has rational double point (ADE) singularities.
\end{enumerate}
\end{proposition}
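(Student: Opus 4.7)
The plan is to deduce the six claims from three ingredients: a local computation of the blow-up of a curvilinear ideal, the canonical-bundle formula $\omega_X \cong \pi^*\omega_S \otimes \O_X(E)$ where $E = \pi^{-1}(\mathcal I_Z)\cdot \O_X$ is the scheme-theoretic exceptional Cartier divisor, and the rationality of Du Val singularities.

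For (1), since $Z$ is curvilinear, at each reduced support point $p_i$ I choose local coordinates so that $\mathcal I_{Z, p_i} = (x, y^{k_i})$, with $\sum k_i = d-3$. Blowing up this ideal on $\A^2$ produces, in one affine chart, the hypersurface $xv = y^{k_i}$ in $\A^3$, which is an $A_{k_i-1}$ surface singularity, while the other chart is smooth. Hence $X$ has at worst $A_n$ singularities, and a direct Poincar\'e-residue computation on both charts verifies $\omega_X = \pi^*\omega_S \otimes \O_X(E)$, giving $\omega_X^{-1} = \pi^*\omega_S^{-1} \otimes \pi^{-1}(\mathcal I_Z)\cdot \O_X$.

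For (2), the projection formula gives $\pi_*\omega_X^{-1} = \omega_S^{-1}\otimes \pi_*\O_X(-E)$, and a local computation---using the local-complete-intersection structure of $\mathcal I_Z$---shows $\pi_*\O_X(-E) = \mathcal I_Z$. Thus $H^0(X,\omega_X^{-1}) = H^0(S, \mathcal I_Z \otimes \omega_S^{-1})$, which is $4$-dimensional by admissibility. Since $|\mathcal I_Z\otimes \omega_S^{-1}|$ is globally generated with scheme-theoretic base locus exactly $Z$, the rational map $S \dashrightarrow \P^3$ it defines is resolved by $\pi$, and the resulting linear system on $X$, which corresponds to $\omega_X^{-1}$, is base-point free.

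To prove (4) and (6), let $\phi \colon X \to \P^3$ be the induced morphism and $Y = \phi(X)$. Let $\widetilde X \to X$ be the minimal resolution; composing with $\pi$ realizes $\widetilde X \to S$ as an iterated blow-up at $d-3$ (possibly infinitely near) points, and $\widetilde X \to X$ is crepant. Hence $(\omega_X^{-1})^2 = K_{\widetilde X}^2 = K_S^2 - (d-3) = 3$, so $\deg \phi \cdot \deg Y = 3$. Because the four sections defining $\phi$ span a $4$-dimensional space, $Y$ is not contained in any hyperplane, so $\deg Y \geq 2$, forcing $\deg Y = 3$ and $\deg \phi = 1$. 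The curves contracted by the composite $\widetilde\phi \colon \widetilde X \to Y$ are, by adjunction together with $-K_{\widetilde X}\cdot C = 0$, smooth rational curves of self-intersection $-2$; their intersection graph is negative definite and hence a disjoint union of ADE Dynkin diagrams, so the corresponding singularities on $Y$ are rational double points. Normality of $Y$ is then automatic, since $Y$ is a hypersurface in $\P^3$ with isolated singularities.

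Finally, for (3) and (5), the morphism $\widetilde \phi$ is a resolution of the Du Val (hence rational) singularities of $Y$, so $R\widetilde\phi_*\O_{\widetilde X} = \O_Y$ and $\widetilde\phi^*\O_Y(1) = \omega_{\widetilde X}^{-1}$. The projection formula gives $H^i(\widetilde X, \omega_{\widetilde X}^{-n}) = H^i(Y, \O_Y(n))$, and the crepant resolution $\widetilde X \to X$ of rational singularities likewise gives $H^i(X, \omega_X^{-n}) = H^i(\widetilde X, \omega_{\widetilde X}^{-n})$. The short exact sequence $0 \to \O_{\P^3}(n-3) \to \O_{\P^3}(n) \to \O_Y(n) \to 0$ together with standard projective-space cohomology then yields $H^i(Y, \O_Y(n)) = 0$ for $i > 0$, $n \geq 0$, as well as $H^0(Y, \O_Y(n)) = S_n$. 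The main subtlety of the argument is the canonical-bundle formula for the non-reduced blow-up centre in the first step; once that is in hand, the remaining steps follow from intersection theory and the standard cohomology vanishing for rational singularities.
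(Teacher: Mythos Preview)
Your argument is largely sound and takes a genuinely different route from the paper's for parts (3), (5), and (6). The paper proves (3) by an elementary induction on $n$ using a general anticanonical curve $C \in |\omega_X^{-1}|$, then deduces (5) by comparing Euler characteristics via Riemann--Roch, and finally gets (6) by writing $Y = \Proj \bigoplus_n H^0(X,\omega_X^{-n})$, which is normal because $X$ is, and then citing the Bruce--Wall classification. You instead prove (6) first by analysing the $(-2)$-curves on the minimal resolution $\widetilde X$, and then deduce (3) and (5) in one stroke from the rationality of Du Val singularities together with the restriction sequence $0 \to \O_{\P^3}(n-3) \to \O_{\P^3}(n) \to \O_Y(n) \to 0$. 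Your approach is more conceptual and avoids the induction, at the cost of invoking rational singularities; the paper's is more elementary but longer.

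There is, however, a circularity in your treatment of (6). You argue that the curves contracted by $\widetilde\phi$ form an ADE configuration, ``so the corresponding singularities on $Y$ are rational double points,'' and then conclude normality of $Y$ from $Y$ being a hypersurface with isolated singularities. But the ADE analysis only tells you about the Stein factorisation $\widetilde X \to Y' \to Y$: it is $Y'$ that acquires the Du Val points, and you have not yet shown $Y' = Y$, i.e.\ that $Y$ is normal. Equivalently, you assert that $Y$ has isolated singularities without justification. Your subsequent use of $R\widetilde\phi_*\O_{\widetilde X} = \O_Y$ in the proof of (3) and (5) then also presupposes normality.

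The fix is short. One option: a general hyperplane section $H \cap Y$ is the birational image of a general member of $|-K_{\widetilde X}|$, which by Bertini and adjunction is a smooth connected curve of genus $1$; were $Y$ singular along a curve, $H \cap Y$ would be a singular plane cubic, hence of geometric genus $0$, contradicting birationality. Alternatively, you can reorder your argument: Kodaira vanishing on the smooth $\widetilde X$ (with $-K_{\widetilde X}$ big and nef) gives $H^i(\widetilde X, \omega_{\widetilde X}^{-n}) = 0$ for $i>0$, $n \geq 0$ directly, after which (5) follows from the Riemann--Roch dimension count exactly as in the paper, and then normality of $Y$ drops out of $Y \cong \Proj \bigoplus_n H^0(\widetilde X, \omega_{\widetilde X}^{-n})$.
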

\begin{proof}
  Let $\pi \colon X \to S$ be the blow-up map.
  Since $Z$ is curvilinear, $X$ has at worst $A_n$ singularities.
  Indeed, if $Z$ is isomorphic to $\spec k[t]/t^{n+1}$ at a point $z$ in its support, then $X$ is singular at a unique point of $\pi^{-1}(z)$; this singularity is of type $A_n$.
  Note that $E_z := \pi^{-1}(z)$ (taken with the reduced structure) is isomorphic to $\P^1$.
  
  Let $E \subset X$ be the exceptional divisor, namely, the Cartier divisor defined by the ideal $\pi^{-1}\mathcal I_Z$.
  Over the point $z$ as above, $E$ has multiplicity $n$ along the component $E_z$.
  Furthermore, we have $(nE_z)^2 = -n$, and hence $E^2 = -(d-3)$.
  Also, see that we have
  \begin{equation}\label{eqn:wx}
    \omega_X = \pi^* \omega_{S} \otimes \O_X(E),
  \end{equation}
  and therefore,
  \[ \omega_X^{-1} = \pi^* \omega^{-1}_{S} \otimes \O_X(-E).\]
  By applying $\pi_*$, we get
  \[ \pi_* \left( \omega_X^{-1} \right) = \mathcal I_Z \otimes \omega_S^{-1}.\]
  By our assumption, $H^0\left(\mathcal I_Z \otimes \omega_S^{-1}\right)$ is $4$-dimensional and its base-locus is precisely $Z$.
  Hence $H^0\left( X, \omega_X^{-1} \right)$ is $4$-dimensional and base-point free.

  For the vanishing of higher cohomology, we induct on $n$.
  Since $X$ is a rational surface with rational singularities, we have $h^1(\O_X) = h^1(\omega_X) = 0$.
  Let $C \subset X$ be the zero locus of a general section of $\omega_X^{-1}$.
  By Bertini's theorem, $C$ is smooth and by the adjunction formula, it has arithmetic genus $1$.
  Taking global sections in the sequence
  \[ 0 \to \omega_X \to \O_X \to \O_C \to 0\]
  shows that $h^0(\O_C) = 1$, and hence $C$ is connected.
  Twisting the above sequence by $\omega_X^{-n}$ gives
  \[ 0 \to \omega_X^{-n+1} \to \omega_X^{-n} \to \omega_X^{-n}|_C \to 0.\]
  The restriction $\omega_X^{-n}|_C$ has degree $3n$, which is positive if $n$ is positive, and hence $h^i\left(\omega_X^{-n}|_C\right) = 0$ for $i > 0$.
  By applying the long exact sequence in cohomology, we see that if $i > 0$, then $h^i(\omega_X^{-n+1}) = 0$ implies $h^i(\omega_X^{-n}) = 0$.
  By induction on $n$, we conclude $h^i(\omega_X^{-n}) = 0$ for all $n \geq 0$ and $i > 0$.
    
  To get the degree of the image of $X \to \P^3$, we compute the self-intersection $c_1(\omega_X)^2$.
  Using \eqref{eqn:wx} and $E^2 = -(d-3)$, we compute
  \[ c_1(\omega_X)^2 = c_1(\omega_{S})^2 - d + 3 = 3.\]
  Thus, either the image is a hyperplane $\Pi$ and the map $X \to \Pi$ has degree 3, or the image is a cubic surface $Y$ and the map $X \to Y$ is birational.
  We rule out the first possibility since the map $X \to \P^3$ is defined by linearly independent sections, and hence does not factor through a hyperplane.

  By Riemann--Roch, we get that $\chi(X,\omega_X^{-n}) = \chi(Y, \O(n))$.
  We also know that for $i > 0$ and $n \geq 0$, both $h^i(X, \omega_X^{-n})$ and $h^i(Y, \O(n))$ vanish.
  Hence, we get $h^0(X, \omega_X^{-n}) = h^0(Y, \O(n))$.
  Recall that, the $n$-th graded component $S_n$ of the homogeneous coordinate ring of $Y$ is precisely $H^0(Y, \O(n))$.
  Since $X \to Y$ is surjective, the natural map $S_n \to H^0(X, \omega_X^{-n})$ is injective.
  Since the dimensions of the source and the target are equal, it must be an isomorphism.

  From the isomorphism $S \cong \bigoplus_{n \geq 0} H^0(X, \omega_X^{-n})$, we get
  \[ Y = \Proj\left( \bigoplus_{n \geq 0}H^0\left(X, \omega_X^{-1} \right) \right).\]
  Since $X$ is normal, so is $Y$.
  It is known that an irreducible, normal, rational cubic surface, such as $Y$, only has rational double point (ADE) singularities \cite{bru.wal:79}.
\end{proof}

\begin{definition}[Associated cubic surface]
  \label{def:cubicZ}
  Let $S$ be a smooth del Pezzo surface of degree $d$ and let $Z \subset S$ be an admissible subscheme of length $d-3$.
  The \emph{cubic surface associated to $Z \subset S$}, denoted by
  $X_Z \subset \P^3$, is the image of the map
  $\Bl_{Z}S \to \P^3$ induced by the anti-canonical linear system $H^0\left(\Bl_ZS, \omega_{\Bl_Z S}^{-1}\right)$.
\end{definition}
By \autoref{prop:admissibleblowup}, we may equivalently define $X_Z$ as
\[ X_Z = \Proj \left( \bigoplus_{n \geq 0} H^0\left( \Bl_ZS, \omega_{\Bl_Z S}^{-n} \right) \right).\]
Let us make the construction $Z \leadsto X_Z$ in families.
Let $B$ be a scheme, $P \to B$ a smooth proper family of del-Pezzo surfaces and $Z \subset  P$ a closed subscheme, flat over $B$, whose fibers are admissible subschemes.
Consider the blowup $\Bl_{ Z}  P$.
Note that $Z \subset  P$ is a local complete intersection, and hence the blow-up is $B$-flat and commutes with arbitrary base change.
In particular, the fiber $(\Bl_ZP)_b$ is the blow up of $P_b$ along $Z_b$.
(The statement about blow-ups and base-change must be well-known, but we could not find a reference so we include a proof; see \autoref{prop:blow-ups-and-base-change}.)
The family of cubic surfaces $X_Z \to B$ associated to $Z \subset P \to B$ is defined by
\[
  X_Z = \Proj_B \left(\bigoplus_{n \geq 0}\pi_*\left(\Bl_ZP, \omega_{\Bl_ZP/B}^{-n}\right)\right).
\]
By \autoref{prop:admissibleblowup}, we see that $X_Z$ is flat over $B$
and its fiber over $b \in B$ is the cubic surface associated to
$Z_b \subset P_b$, as in \autoref{def:cubicZ}.
\begin{proposition}\label{prop:blow-ups-and-base-change}
  Let $\pi \colon P \to B$ be a smooth morphism and $Z \subset P$ a closed subscheme flat and lci over $B$.
  Then $\Bl_ZP \to B$ is flat and for any $C \to B$, the canonical map
  \[  \Bl_{Z_C} P_C \to \left(\Bl_ZP\right)_C.\]
  is an isomorphism.
\end{proposition}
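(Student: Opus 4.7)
The plan is to reduce the proposition to a statement about the powers of the ideal sheaf $\mathcal I_Z \subset \mathcal O_P$, since $\Bl_ZP = \Proj_P\bigl(\bigoplus_{n\geq 0}\mathcal I_Z^n\bigr)$ and both flatness over $B$ and compatibility with base change pass from the graded sheaf algebra to its Proj. Concretely, I would show that each $\mathcal I_Z^n$ is $B$-flat and that for every base change $C \to B$, the canonical map $\mathcal I_Z^n \otimes_{\mathcal O_B} \mathcal O_C \to \mathcal I_{Z_C}^n$ is an isomorphism.

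The first key step is to prove that $Z \hookrightarrow P$ is a regular embedding. I will work locally at a point $z \in Z$ lying over $b \in B$. Since $\pi\colon P \to B$ is smooth, we can locally write $P = \Spec A[x_1,\dots,x_n]$ with $A = \mathcal O_{B,b}$, and since $Z \to B$ is flat and lci, the fiber $Z_b \subset P_b$ is a regular embedding and $\mathcal O_{Z,z}$ is $A$-flat. Lifting a regular sequence $\bar f_1,\dots,\bar f_r$ cutting out $Z_b$ in $P_b$ to elements $f_1,\dots,f_r$ of $\mathcal I_Z$ at $z$, the standard local criterion of flatness (see, e.g., Matsumura's Theorem 22.5) then says that $(f_1,\dots,f_r)$ is an $\mathcal O_{P,z}$-regular sequence cutting out $Z$. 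I expect this to be the main technical step, because one needs to combine smoothness of $P/B$ with the lci and flatness hypotheses on $Z/B$ cleanly.

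Granting the regular embedding, the conormal sheaf $\mathcal N = \mathcal I_Z/\mathcal I_Z^2$ is locally free on $Z$, and for each $n$ the graded piece is naturally identified with $\Sym^n\mathcal N$. Since $\mathcal O_Z$ is $B$-flat by hypothesis and $\mathcal N$ is an $\mathcal O_Z$-module, the sheaves $\Sym^n\mathcal N$ are $B$-flat as well. A straightforward induction on $n$ using the short exact sequences
\[
0 \to \mathcal I_Z^{n+1} \to \mathcal I_Z^n \to \Sym^n\mathcal N \to 0
\]
then yields $B$-flatness of every $\mathcal I_Z^n$, starting from $\mathcal I_Z^0 = \mathcal O_P$.

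For base change, the same short exact sequences remain exact after tensoring with $\mathcal O_C$ because each $\mathcal I_Z^n$ is $B$-flat. A parallel induction then identifies $\mathcal I_Z^n \otimes_{\mathcal O_B} \mathcal O_C$ with the $n$-th power of $\mathcal I_{Z_C} = \mathcal I_Z \otimes_{\mathcal O_B} \mathcal O_C$ inside $\mathcal O_{P_C}$; here one uses that $Z_C \hookrightarrow P_C$ is again a regular embedding with conormal sheaf the pullback of $\mathcal N$, so its graded pieces match the pullbacks of $\Sym^n\mathcal N$. Taking $\Proj_P$ of the isomorphism of Rees algebras produces the desired identification $\Bl_{Z_C}P_C \xrightarrow{\sim} (\Bl_ZP)_C$, and flatness of $\Bl_ZP \to B$ follows from flatness of each $\mathcal I_Z^n$ over $B$.
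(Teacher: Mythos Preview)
Your proposal is correct and follows essentially the same approach as the paper: both reduce to the affine setting, use that the ideal is generated by a regular sequence so that the associated graded of the Rees algebra is a polynomial ring over $\mathcal O_Z$, and deduce flatness and base-change compatibility from this. The paper phrases the core step as ``$\operatorname{gr} R_I(A)$ is a polynomial algebra over $A/I$, hence $B$-flat, hence $R_I(A)$ is $B$-flat,'' while you unpack the same argument as an induction on the filtration $0 \to \mathcal I_Z^{n+1} \to \mathcal I_Z^n \to \Sym^n\mathcal N \to 0$; these are the same argument in different clothing. Your additional paragraph justifying that $Z \hookrightarrow P$ is a regular embedding (from $P/B$ smooth and $Z/B$ flat lci) is a detail the paper simply asserts in its local reformulation.
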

\begin{proof}
  The statement is local on $P$.
  After passing to a suitable affine open, we are reduced to the following situation in algebra.
  We have a Noetherian ring $B$ and a finitely generated smooth $B$-algebra $A$.
  We have an ideal $I \subset A$ such that $A/I$ is $B$-flat and $I$ is generated by a regular sequence.
  The blow-up is the $\Proj$ of the Rees algebra
  \[R_I(A) = \bigoplus_{n \geq 0}I^n.\]
  Since $I \subset A$ is generated by a regular sequence, the associated graded ring $\operatorname{gr} R_I(A)$ is a polynomial algebra over $A/I$, and hence flat over $B$.
  It follows that $R_I(A)$ is flat over $B$, and hence its $\Proj$ is flat over $\spec B$.
  
  For the statement about base-change, we show that the formation of the Rees algebra commutes with base-change.
  For a $B$-algebra $C$, let $J \subset A \otimes_B C$ be the image of $I \otimes_B C$.
  Consider the map
  \[ i \colon R_I(A) \otimes_B C \to R_J(C).\]
  We use again that the associated graded rings are polynomial algebras, on which the map induced by $i$ is clearly an isomorphism.
  Hence $i$ is an isomorphism.
\end{proof}

Let $Z \subset \P^2$ be an admissible subscheme of length 6.
An automorphism of the pair $(\P^2, Z)$ induces an automorphism of $\Bl_Z\P^2$ and hence an automorphism of the associated cubic surface $X_Z$.

\begin{proposition}
  \label{prop:good} Let $Z \subset \P^2$ be an admissible, length $6$
  subscheme and $X_Z \subset \P^3$ its associated cubic surface.
  The map
  \[ \Aut(\P^2, Z) \to \Aut(X_Z)\]
  is injective and its image has finite index.
  In particular, $\Aut(\P^2, Z)$ is finite if and only if $\Aut(X_Z)$ is.
\end{proposition}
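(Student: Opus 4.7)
My plan is to factor the map in question as
\[ \Aut(\P^2, Z) \longrightarrow \Aut(\tilde X) \xrightarrow{\sim} \Aut(X_Z), \]
where $\tilde X = \Bl_Z \P^2$, and analyze each piece separately. For the isomorphism on the right, I would observe that by \autoref{prop:admissibleblowup}, $X_Z$ has only rational double points, and the anti-canonical morphism $\tilde X \to X_Z$ is a resolution of singularities that contracts only $K$-trivial (hence $(-2)$) curves, so it is the minimal resolution. Any automorphism of $X_Z$ lifts uniquely to its minimal resolution, and conversely any automorphism of $\tilde X$ descends to $X_Z$ since the set of $(-2)$-curves is preserved, yielding the canonical isomorphism.

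For the first arrow $\Aut(\P^2, Z) \to \Aut(\tilde X)$, injectivity is immediate because $\pi \colon \tilde X \to \P^2$ is surjective. The substantive task is to show the image has finite index. I would identify the image with the subgroup $G \subset \Aut(\tilde X)$ of automorphisms preserving the Cartier divisor $E = \pi^{-1}(Z)$. The inclusion of the image into $G$ is clear, because any lift of $\phi \in \Aut(\P^2, Z)$ sends $\pi^{-1}(Z)$ to itself. For the reverse, given $\psi \in G$, it restricts to an automorphism of $\tilde X \setminus E \cong \P^2 \setminus |Z|$, which extends uniquely to $\bar\psi \in \Aut(\P^2)$ by normality of $\P^2$. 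To verify that $\bar\psi$ preserves $Z$ scheme-theoretically (and not merely set-theoretically), I would invoke the identity $\pi_* \O_{\tilde X}(-E) = \mathcal I_Z$: since $\psi$ preserves $\O_{\tilde X}(-E)$, the descent preserves $\mathcal I_Z$, giving $\bar\psi \in \Aut(\P^2, Z)$.

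To conclude that $G$ has finite index in $\Aut(\tilde X)$, I would use that $\tilde X$ is a weak del Pezzo surface of degree $3$, and hence carries only finitely many negative curves. The group $\Aut(\tilde X)$ permutes this finite set, so the induced action factors through a finite quotient. The stabilizer of the subset consisting of the irreducible components of $E$ therefore has finite index in $\Aut(\tilde X)$, and since the multiplicity of $E$ along each component is an intrinsic invariant of that component, this stabilizer is contained in $G$. Hence $G$ has finite index, completing the proof.

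The most delicate point, in my view, is the scheme-theoretic descent in the identification of the image with $G$: verifying that a $\psi$ preserving $E$ as a Cartier divisor really yields an automorphism of $\P^2$ preserving $Z$ as a scheme, rather than just its underlying set. This is handled by the push-forward identity $\pi_* \O_{\tilde X}(-E) = \mathcal I_Z$, which is standard for blow-ups along regularly embedded subschemes but must be invoked with some care.
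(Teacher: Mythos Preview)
Your argument has a genuine gap: you tacitly assume that $\tilde X = \Bl_Z \P^2$ is smooth, but \autoref{prop:admissibleblowup}(1) only guarantees that it has at worst $A_n$ singularities. When $Z$ is curvilinear but non-reduced (which the hypothesis ``admissible'' allows), $\Bl_Z \P^2$ is genuinely singular. Consequently, the anti-canonical map $\tilde X \to X_Z$ is not a resolution of singularities, and your claimed isomorphism $\Aut(\tilde X) \xrightarrow{\sim} \Aut(X_Z)$ is unjustified: an automorphism of $X_Z$ lifts to the \emph{minimal resolution} $\widetilde Y$ of $X_Z$, but the lifted automorphism need not preserve the particular configuration of $(-2)$-curves contracted by $\widetilde Y \to \Bl_Z \P^2$, hence need not descend to $\Bl_Z \P^2$. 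Likewise, your appeal to ``$\tilde X$ is a weak del Pezzo with finitely many negative curves'' presupposes smoothness.

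The paper's proof follows the same overall architecture but repairs this by passing to the minimal resolution $\widetilde Y$ of $\Bl_Z \P^2$, which it shows is also the minimal resolution of $X_Z$. It then argues finite index differently: rather than tracking the components of the exceptional divisor, it tracks the class $H \in \Pic(\widetilde Y)$ pulled back from $\O_{\P^2}(1)$. Since $H + K_{\widetilde Y}$ lies in $K_{\widetilde Y}^\perp$, which is negative definite by the Hodge index theorem, the $\Aut(\widetilde Y)$-orbit of $H$ is finite, so its stabilizer has finite index. The stabilizer is identified with the image of $\Aut(\P^2, Z)$ via $\beta_*(\omega_{\widetilde Y}^{-1}(-3H)) = \mathcal I_Z$, which plays the same role as your push-forward identity $\pi_*\O_{\tilde X}(-E) = \mathcal I_Z$, but at the level of line bundle classes rather than effective divisors. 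Your strategy would become correct if you replaced $\Bl_Z \P^2$ by its minimal resolution throughout; the paper's Picard-lattice argument is one clean way to finish from there.
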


\begin{proof}
  Since we have a birational isomorphism $\P^2 \dashrightarrow X_Z$, a non-trivial automorphism of $\P^2$ cannot induce a trivial automorphism of $X_Z$.
  Hence the map of automorphism groups is injective.

  To show that the image has finite index, let $\tau \colon \widetilde Y \to \Bl_Z\P^2$ be the minimal desingularization and set $Y = X_Z$.
  Since $\Bl_Z\P^2$ has only $A_n$ singularities and such singularities are canonical, we conclude that
  \[\tau^* \omega_{\Bl_Z\P^2} = \omega_{\widetilde Y}.\]
  Consider the composite $\mu$ of 
  \[ \widetilde Y \xrightarrow{\tau} \Bl_Z \P^2 \xrightarrow{\kappa} Y.\]
  Note that
  \[ \mu^* \omega_Y = \omega_{\widetilde Y}.\]
  We claim that $\mu \colon \widetilde Y \to Y$ is the minimal desingularization of $Y$.
  To see this, let $Y' \to Y$ be the minimal desingularization.
  Since $\widetilde Y$ is non-singular, the birational morphism $\mu \colon \widetilde Y \to Y$ factors as a composite
  \[ \widetilde Y \xrightarrow{\alpha} Y' \to Y.\]
  Since $\alpha$ is a birational morphism between non-singular surfaces, it is a composite of blow-ups at points.
  In particular, if it is not an isomorphism, then there exists a $(-1)$-curve $E \subset \widetilde Y$ contracted by $\alpha$.
  But then we have 
  \[ E \cdot \omega_{\widetilde Y} = E \cdot \mu^* \omega_Y = 0,\]
  which contradicts the adjunction formula
  \[ \left(\omega_{\widetilde Y}+ E\right) \cdot E = -2.\]
  Thus, we deduce that $\alpha$ is an isomorphism, and hence $\widetilde Y \to Y$ is the minimal desingularization of $Y$.
  
  Since every automorphism of a surface lifts uniquely to its minimal
  desingularization, we have an injective map
  \[ \Aut(\P^3, X_Z) \to \Aut(\widetilde Y).\]
  It suffices to show that the image of $\Aut(\P^2, Z)$ has finite index in $\Aut(\widetilde Y)$.

  Let $H \in \Pic(\widetilde Y)$ be the pullback of $\O(1)$ under the map $\widetilde Y \to \P^2$.
  We claim that the orbit of $H$ under $\Aut(\widetilde Y)$ is finite.
  To see this, note that we have
  \[ \omega_{\widetilde Y}^2 = 3 \text{ and } \omega_{\widetilde Y} \cdot H = -3, \]
  and hence $(\omega_{\widetilde Y} + H)$ is orthogonal to $\omega_{\widetilde Y}$ under the intersection pairing.
  Since $\omega_{\widetilde Y}$ has positive self-intersection, its orthogonal complement $\omega_{\widetilde Y}^\perp \subset \Pic(\widetilde Y) = {\rm NS}(\widetilde Y)$ is a negative definite lattice by the Hodge index theorem.
  Thus, it has only finitely many elements of a given self-intersection.
  Any automorphism of $\widetilde Y$ preserves $\omega_{\widetilde Y}$ and the self-intersection number.
  Therefore, under $\Aut(\widetilde Y)$, the orbit of $\omega_{\widetilde Y} + H$ and hence the orbit of $H$, must be finite.

  Since the orbit of $H$ is finite, its stabilizer is a finite index subgroup.
  We claim that this subgroup is precisely the image of $\Aut(\P^2, Z)$.
  Let $\tau \colon \widetilde Y \to \widetilde Y$ be an automorphism fixing $H$.
  The map $\beta \colon \widetilde Y \to \P^2$ is given by the complete linear system $|H|$.
  Hence $\tau$ induces an automorphism $\tau' \colon \P^2 \to \P^2$.
  It remains to show that $\tau'$ preserves $Z$.
  Note that we have
  \[ \beta_* \left(\omega_{\widetilde Y}^{-1}\right) = \O(3) \otimes \mathcal I_Z,\]
  and hence
  \[ \beta_* \left( \omega_{\widetilde Y}^{-1} (-3H) \right) = \mathcal I_Z.\]
  Therefore, the induced map $\tau'$ preserves $\mathcal I_Z$, and hence $Z$.
\end{proof}


\section{Test families}\label{sec:testfamilies}
\subsection{The first test family}
\label{sec:family-b_1}
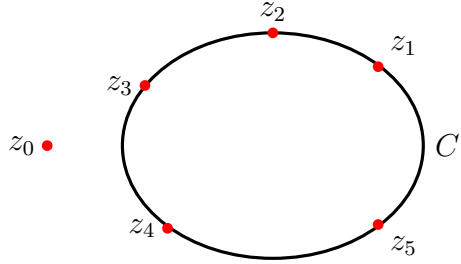
\begin{figure}
  \centering
  \begin{tikzpicture}[very thick]
  \draw (0,0) ellipse (2 and 1.5) (2,0) node [right] {$C$};
  \draw[red,fill] (-3, 0) circle (0.05) node [black, left] {$z_0$};
  \draw[red, fill] (1.4, 1.05) circle (0.05) node [black, above right] {$z_1$}
  (0,1.5) circle (0.05) node [black, above] {$z_2$}
  (-1.7,.8) circle (0.05) node [black, left] {$z_3$}
  (-1.4,-1.1) circle (0.05) node [black, left] {$z_4$}
  (1.4,-1.05) circle (0.05) node [black, below right] {$z_5$};
\end{tikzpicture}

  \caption{In the first test family, we blow up the plane along $\{z_0, \dots, z_5\}$ as $z_0$ stays fixed and $z_1,\dots,z_5$ vary in a general 4-dimensional series on a fixed conic $C$.}
  \label{fig:FamilyB1}
\end{figure}
Fix a point $z_0$ and a conic $C \subset \P^2$ not containing $z_0$.
We vary 5 points $z_1, \dots, z_5$ in a general 4-dimensional linear series on $C$.
Blowing up the six points $\{z_0, \dots, z_5\}$ gives a 4-parameter family of cubic surfaces; see \autoref{fig:FamilyB1}.
\begin{proposition}
  \label{prop:B1}
  Let $C$ be a plane conic, $Z \subset C$ a subscheme of length 5, and $z_0 \in \P^2$ a point not on $C$.
  Then the subscheme $Z \cup \{z_0\} \subset \P^2$ is admissible.
\end{proposition}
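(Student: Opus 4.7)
The plan is to verify in turn the three conditions of \autoref{def:admissible} for the length-$6$ subscheme $W := Z \cup \{z_0\} \subset \P^2$, with $S = \P^2$ and $\omega_S^{-1} = \O(3)$. Curvilinearity is immediate, since $Z$ lies on the smooth curve $C$ and $z_0$ is a reduced point disjoint from $Z$. For the cohomology count I would use
\[ 0 \to \mathcal I_C(3) \to \mathcal I_Z(3) \to \mathcal I_{Z/C}(3) \to 0, \]
with $\mathcal I_C(3) \cong \O_{\P^2}(1)$ of $h^0=3$ and $\mathcal I_{Z/C}(3) \cong \O_{\P^1}(1)$ of $h^0=2$ (since $\O_{\P^2}(3)|_C = \O_{\P^1}(6)$ and $Z$ has length $5$), giving $h^0(\mathcal I_Z(3)) = 5$. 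The cubic $C \cup L$ for any line $L$ not through $z_0$ then witnesses that $z_0$ imposes an independent condition, so $h^0(\mathcal I_W(3)) = 4$.

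The main task is global generation. Inside the $4$-dimensional space $H^0(\mathcal I_W(3))$ sits the $2$-dimensional subspace $U$ spanned by the cubics $C \cup L$ with $L \ni z_0$; these are exactly the sections whose zero locus contains $C$. For a cubic $D$ representing a nonzero class of the quotient $H^0(\mathcal I_W(3))/U$, B\'ezout gives $D \cdot C = Z + r(D)$ for a residual point $r(D) \in C$ depending only on the class $[D]$, so one obtains a morphism $r \colon \P\bigl(H^0(\mathcal I_W(3))/U\bigr) \cong \P^1 \to C \cong \P^1$. The linchpin is that $r$ is non-constant: were $r$ constantly equal to some $p_0 \in C$, then $p_0$ would be a base point of the linear system, forcing $h^0(\mathcal I_{W \cup \{p_0\}}(3)) = 4$; but redoing the cohomology count with $Z \cup \{p_0\} \subset C$ of length $6$ yields $h^0(\mathcal I_{Z \cup \{p_0\}}(3)) = 3 + h^0(\O_{\P^1}) = 4$, and imposing the independent condition at $z_0$ cuts this down to $3$, a contradiction.

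With $r$ non-constant, global generation can be checked point by point. A point $p \notin C \cup \{z_0\}$ is missed by $C \cup L$ for some line $L \ni z_0$ with $p \notin L$; a point $p \in C \setminus \mathrm{supp}(Z)$ is missed by any $D$ with $r(D) \neq p$; and at $z_0$ the cubics $C \cup L_1, C \cup L_2$ for two distinct lines $L_i \ni z_0$ have tangent directions $L_1, L_2$ and so span the fiber at $z_0$. The main obstacle I anticipate is at a point $p \in \mathrm{supp}(Z)$ where $Z$ has some multiplicity $m \geq 1$: every cubic in $U$ has the same leading behavior at $p$ (vanishing along $C$ to order $m$), so a second fiber generator must come from outside $U$. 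Choosing $D$ with $r(D) \neq p$---available by non-constancy of $r$---forces $D|_C$ to vanish to order exactly $m$ at $p$, supplying the jet direction transverse to $C$ needed to span the fiber of $\mathcal I_W(3)$ at $p$ and completing the verification.
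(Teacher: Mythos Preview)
Your argument is correct and complete. The paper's own proof is essentially a two-sentence assertion that the verification is ``straightforward,'' so there is nothing to compare against beyond noting that you have supplied the details the authors omit.

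One simplification you may want: the non-constancy of $r$ can be seen without the contradiction argument. The restriction map
\[
H^0(\mathcal I_W(3)) \longrightarrow H^0\bigl(C,\mathcal I_{Z/C}(3)\bigr) \cong H^0(\P^1,\O(1))
\]
has kernel exactly $U$ (the cubics containing $C$ and passing through $z_0$), so it induces an \emph{isomorphism} $H^0(\mathcal I_W(3))/U \xrightarrow{\sim} H^0(\P^1,\O(1))$. Hence $r$ is not merely non-constant but an isomorphism onto $C$, and for any prescribed $p \in C$ you can find $D$ with $r(D) \neq p$.
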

\begin{proof}
  The subscheme $Z' = Z \cup \{z_0\}$ is clearly curvilinear.
  Checking that $\mathcal I_{Z'} \otimes \O(3)$ is generated by its global sections is straightforward.
\end{proof}

We now formally define the family.
Let $H = C^{[5]}$ be the Hilbert scheme of 5 points in $C$, so that $H \cong \P^5$, and let $B \cong \P^4 \subset H \cong \P^5$ be a general hyperplane.
Let $Z \subset B \times C$ be the restriction of the universal closed subscheme.
Set 
\[Z' = Z \sqcup \left(B \times \{z_0\}\right) \subset B \times \P^2.\]
Our family
\[\pi \colon X \to B\]
is the family of cubic surfaces associated to $Z' \subset B \times \P^2$ (see \autoref{def:cubicZ}).
\begin{proposition}\label{prop:goodnessB1}
  The family $\pi \colon X \to B$ is good.
\end{proposition}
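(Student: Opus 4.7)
By \autoref{ex:finaut} it suffices to show $\Aut(X_b)$ is finite for every $b \in B$, and by \autoref{prop:good} this reduces to showing $\Aut(\P^2, Z'_b)$ is finite for every $b \in B$. Since $\Aut(\P^2, Z'_b)$ is a closed algebraic subgroup of $\PGL_3$, it has only finitely many components, so finiteness is equivalent to triviality of the identity component $G := \Aut(\P^2, Z'_b)^0$. The plan is to identify the finitely many $b \in H$ for which $G$ is nontrivial and to note that a general hyperplane $B$ misses them.

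First, because $G$ is connected and the length $6$ scheme $Z'_b = Z_b \sqcup \{z_0\}$ has finitely many connected components, $G$ fixes each component. In particular, $G$ fixes the point $z_0$ and preserves the subscheme $Z_b$. Since $Z_b$ has length $5$ and lies on the smooth conic $C$, Bezout's theorem shows that any other plane conic (possibly reducible) meets $C$ in a scheme of length at most $4$, so $C$ is the unique conic containing $Z_b$. Consequently $G$ also preserves $C$, giving
\[ G \subset \Aut(\P^2, C, z_0)^0. \]
The stabilizer in $\PGL_3$ of $C$ is $\Aut(C) \cong \PGL_2$, and its further stabilizer of $z_0$ corresponds to the stabilizer in $\PGL_2$ of the unordered pair $\{p_1, p_2\} := \Polar_C(z_0) \cap C$; its identity component is the one-parameter torus $\G_m$ fixing $p_1$ and $p_2$ pointwise.

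The torus $\G_m$ acts on $C \cong \P^1$ with $p_1, p_2$ as its only fixed points, so $\G_m$ preserves $Z_b$ if and only if $\operatorname{supp}(Z_b) \subset \{p_1, p_2\}$. Since $C$ is smooth and $Z_b$ is curvilinear of length $5$, such $Z_b$ are exactly the divisors $k p_1 + (5-k) p_2$ for $0 \leq k \leq 5$---a finite set of $6$ points in $H \cong \P^5$. A general hyperplane $B \subset H$ avoids these $6$ points, and for every $b \in B$ the identity component $G$ is therefore trivial, making $\Aut(\P^2, Z'_b)$ and hence $\Aut(X_b)$ finite.

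\textbf{Main obstacle.} The crux is the Bezout argument showing that $Z_b$ determines $C$: this pins $G$ inside the small group $\Aut(\P^2, C, z_0)$ and reduces the problem to enumerating $\G_m$-invariant length $5$ divisors on $\P^1$. Everything else is a short dimension count in $\PGL_3$ together with the routine fact that a general hyperplane in $\P^5$ avoids a prescribed finite set.
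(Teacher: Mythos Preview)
Your proof is correct and follows essentially the same approach as the paper. Both arguments reduce via \autoref{ex:finaut} and \autoref{prop:good} to finiteness of $\Aut(\P^2,Z'_b)$, use the uniqueness of the conic $C$ through $Z_b$ to land inside $\Aut(\P^2,C,z_0)\cong \G_m\rtimes\Z/2\Z$, and then observe that a general hyperplane $B$ avoids the finitely many length~$5$ schemes supported on $\Polar_C(z_0)\cap C$; the only cosmetic difference is that you work with the identity component while the paper works with the subgroup fixing the support pointwise.
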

\begin{proof}
  We show that for every $b \in B$, the group $\Aut(X_b)$ is finite (see \autoref{ex:finaut}).
  By \autoref{prop:good}, it suffices to show that the group $\Aut(\P^2, Z_b')$ is finite.
  Since the support of $Z_b'$ is finite, it suffices to show that the subgroup of $\Aut(\P^2, Z'_b)$ that acts trivially on the support is finite.
  Let $\sigma \in \Aut(\P^2, Z'_b)$ act trivially on the support.
  Since $C$ is the unique conic containing $Z_b$, the element $\sigma$ must preserve $C$.
  Since $\sigma(z_0) = z_0$, and $\sigma$ preserves $C$, it must fix the pair of points $\Polar_C(z_0) \cap C$.
  Since $B \subset \Hilb^5C$ is general, we may assume that it does not contain the finitely many length 5 schemes supported on $\Polar_C(z_0) \cap C$.
  Then $\sigma$ fixes at least 3 points of $C$, namely the two points $\Polar_C(z_0) \cap C$ and the points of $Z \subset C$.
  It follows that $\sigma$ fixes all points of $C$ and hence all points of $\P^2$.
\end{proof}

We now compute with the Chern classes of the anti-canonical section bundle.
\begin{proposition}\label{lem:v1}
  Let $\mathcal V = \pi_* \left( \omega_{X/B}^{-1} \right)$.
  Then we have
  \[
    \int_{B}v_{1}^{4} = 16, \quad \int_{B}v_{1}^{2}v_{2} = 4,\quad \int_{B}v_{1}v_{3} =0,\quad \int_{B}v_2^{2} = 1,\quad \int_{B}v_{4} = 0.\]
\end{proposition}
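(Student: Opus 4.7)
The plan is to identify $\mathcal V$ explicitly as the pushforward of a twisted ideal sheaf on $B \times \P^2$ and then read off the Chern classes by a short resolution.

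First, by \autoref{prop:admissibleblowup} applied in families, the bundle $\mathcal V = \pi_*(\omega_{X/B}^{-1})$ is canonically isomorphic to $q_*\!\bigl(\mathcal I_{Z'} \otimes p^*\O_{\P^2}(3)\bigr)$, where $p, q$ denote the two projections from $B \times \P^2$. The short exact sequence
\[
  0 \to \mathcal I_{Z'} \otimes p^*\O(3) \to p^*\O(3) \to \O_{Z'} \otimes p^*\O(3) \to 0,
\]
combined with the vanishing $h^1(\P^2, \mathcal I_{Z_b'}(3)) = 0$ coming from admissibility (\autoref{prop:B1}), pushes forward to
\[
  0 \to \mathcal V \to \O_B^{\oplus 10} \to q_*\!\bigl(\O_{Z'} \otimes p^*\O(3)\bigr) \to 0.
\]
So it suffices to compute $c(q_*(\O_{Z'} \otimes p^*\O(3)))$ and then invert.

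Second, split $Z' = Z \sqcup (B \times \{z_0\})$. The $B \times \{z_0\}$ summand contributes a trivial $\O_B$-factor to the pushforward, so it is harmless for Chern classes. For the $Z$ part, I will use the standard resolution of the universal length-5 subscheme on $H \times C$. Identifying $H \cong \P^5$ with the projectivization of $H^0(C, M^{\otimes 5})^\vee$, where $M$ is the degree-1 line bundle on $C \cong \P^1$, the universal subscheme is cut out by a section of $\O_H(1) \boxtimes M^{\otimes 5}$. Restricting to $B \subset H$ and then tensoring by $p^*\O_{\P^2}(3)|_{B\times C} = \O_B \boxtimes M^{\otimes 6}$ (since a conic has $\O_{\P^2}(1)|_C = M^{\otimes 2}$) gives, on $B \times C$,
\[
  0 \to \O_B(-1) \boxtimes M \to \O_B \boxtimes M^{\otimes 6} \to \O_Z \otimes p^*\O(3)|_Z \to 0.
\]
Pushing forward by $q$ (with all higher $R^1$'s vanishing because $H^1$ of $M$ and $M^{\otimes 6}$ on $\P^1$ are zero), I get a presentation
\[
  0 \to \O_B(-1)^{\oplus 2} \to \O_B^{\oplus 7} \to q_*(\O_Z \otimes p^*\O(3)) \to 0.
\]

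From these two short exact sequences, the total Chern class assembles formally as
\[
  c(\mathcal V) \;=\; \frac{c(\O_B(-1)^{\oplus 2})}{1} \;=\; (1-h)^2,
\]
where $h \in A^1(B)$ is the hyperplane class on $B \cong \P^4$. Hence $v_1 = -2h$, $v_2 = h^2$, $v_3 = v_4 = 0$, and plugging into the five monomials while using $\int_B h^4 = 1$ gives the claimed values $16$, $4$, $0$, $1$, $0$.

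The main technical point to be careful about is the degree bookkeeping on $C$: because $C$ is embedded as a conic, $\O_{\P^2}(1)|_C$ has degree $2$ on $C \cong \P^1$, so the twist that enters the pushforward of $\O_Z \otimes p^*\O(3)$ involves $M^{\otimes 6}$ and not $M^{\otimes 3}$. Once this is tracked correctly, every pushforward is computed by cohomology on $\P^1$, and no further input is needed.
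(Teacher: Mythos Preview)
Your proof is correct and follows essentially the same idea as the paper: both reduce to the fact that $Z \subset B \times C$ is a divisor of class $(1,5)$, so that all pushforwards are line-bundle cohomology on $\P^1$, and both arrive at $c(\mathcal V) = (1-h)^2$. The only organizational difference is that the paper filters $\mathcal V$ directly by the trivial rank-2 sub-bundle of cubics vanishing on $C \cup \{z_0\}$ (yielding $0 \to \O_B^2 \to \mathcal V \to \O_B(-1)^2 \to 0$), whereas you realize $\mathcal V$ as a kernel of $\O_B^{10} \twoheadrightarrow q_*(\O_{Z'}(3))$ and compute the target instead; the two routes are dual to each other and equally short.
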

\begin{proof}
By the construction of the cubic surface, we see that $\mathcal V$ is the rank 4 bundle
\[ \mathcal V = \pi_* (\mathcal I_{Z'} \otimes \omega_{\pi}^{-1}),\]
where $\pi \colon B \times \P^2 \to B$ is the first projection.
It contains the constant rank 2 bundle
\begin{equation}\label{eqn:v11}
  \pi_*\left( \mathcal I_{B \times C} \otimes \mathcal I_{B \times z_0} \otimes \omega_\pi^{-1} \right) = H^0\left(\mathcal I_{C} \otimes \mathcal I_{z_0} \otimes \omega_{\P^2}^{-1}\right) \otimes \O_{B},
\end{equation}
and the quotient is the rank 2 bundle $\pi_*\left( I_{Z/B \times C} \otimes \omega_\pi^{-1} \right)$.
To identify the quotient, note that $Z \subset B \times C$ is a divisor of class $(1,5)$ and $\omega^{-1}_{\pi}|_{B\times C}$ is isomorphic to $\O(0,6)$.
Hence, $I_{Z/B \times C} \otimes \omega_\pi^{-1}$ is isomorphic to $\O(-1,1)$, and therefore
\begin{equation}\label{eqn:v12}
  \begin{split}
    \pi_*\left( I_{Z/B \times C} \otimes \omega_\pi^{-1} \right) \cong \O_{B}(-1)^{2}.
  \end{split}
\end{equation}
By combining \eqref{eqn:v11} and \eqref{eqn:v12}, we see that $\mathcal V$ is an extension
\begin{equation}\label{eqn:V1}
  0 \to \O_{B}^2 \to \mathcal V \to \O_{B}(-1)^2 \to 0.
\end{equation}
The Chern class calculation is now straightforward using the Whitney sum formula.
\end{proof}

Let us find the degree of the map to moduli $B \dashrightarrow \M$.
\begin{proposition}\label{lem:deg1}
  The degree of $B \dashrightarrow \M$ is $4320$.
\end{proposition}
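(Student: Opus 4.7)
The plan is to count, for a general smooth cubic surface $X$, the number of $b \in B$ with $X_b \cong X$; by \autoref{prop:goodnessB1} and \autoref{prop:goodisgood}, this count equals $\deg \mu$. Since $\Aut(X) = 1$ for generic $X$, each such $b$ is uniquely labeled by the collection of six pairwise disjoint $(-1)$-curves in $X$ that are the exceptional divisors of $\Bl_{\{z_0\}\cup Z_b}\P^2 \to X_b$, together with which one corresponds to $z_0$. By \autoref{theorem:schlafli} there are $36 \cdot 2 = 72$ such unordered sextuples of skew lines on $X$, and with the $6$-fold choice of distinguished line we obtain $432 = 72\cdot 6$ configurations $(q_0, \{q_1,\ldots,q_5\})$ of points in $\P^2$ modulo $\PGL_3$.

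For each such configuration, I count the $\sigma \in \PGL_3$ with $\sigma(q_0) = z_0$, $\sigma(\{q_1,\ldots,q_5\}) \subset C$, and whose resulting length-$5$ subscheme lies on the hyperplane $B \subset \Hilb^5 C \cong \P^5$. The first two conditions are equivalent to $\sigma$ sending the pair $(q_0, C_q)$, where $C_q$ is the unique conic through $q_1,\ldots,q_5$, to $(z_0, C)$; such $\sigma$ form a coset of $G := \mathrm{Stab}_{\PGL_3}(z_0, C)$. Via the restriction $G \hookrightarrow \Aut(C) = \PGL_2$, the group $G$ is identified with the normalizer of the maximal torus fixing the pair $\Polar_C(z_0) \cap C$, so $G \cong \G_m \rtimes \Z/2$.

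As $\sigma$ varies over its coset, the image $\{\sigma(q_1),\ldots,\sigma(q_5)\}$ traces out the $G$-orbit of a generic point of $\Hilb^5 C \cong \P^5$. The identity component $\G_m$ acts on this $\P^5$ through the action of $\PGL_2$ on degree-$5$ divisors of $C \cong \P^1$, with six distinct weights $0,1,\ldots,5$; the orbit closure of a generic point is therefore a rational normal curve of degree $5$. The involution in $G$ swaps this with a second such curve, so the total $G$-orbit closure has degree $10$. A generic hyperplane $B \subset \P^5$ meets this curve in $10$ points, and summing over the $432$ configurations yields $\deg \mu = 432 \cdot 10 = 4320$. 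The chief subtleties, and where I would spend the care of the proof, are verifying that distinct configurations produce distinct points of $B$ (which uses $\Aut(X) = 1$) and that the $10$ intersection points with $B$ are transverse (from genericity of $B$ together with the fact that for generic $q_i$ the orbit closure is not contained in any hyperplane).
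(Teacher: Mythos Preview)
Your proof is correct and follows essentially the same route as the paper: lift to $\M^\dagger/S_5$, identify $\Aut(\P^2, C, z_0)\cong \G_m\rtimes\Z/2$, observe that the $\G_m$-orbit closure in $\Hilb^5 C\cong\P^5$ is a rational normal quintic, so the full orbit meets a general hyperplane in $10$ points, and multiply by $72\cdot 6!/5!=432$. Your added remarks on transversality and injectivity of the configuration-to-point map are sound and address points the paper leaves implicit.
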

\begin{proof}
  Consider the space of marked cubic surfaces $\M^\dagger$, which is isomorphic to the configuration space of 6 ordered points on $\P^2$.
  We have the action of $S_5$ on $\M^\dagger$ via permutations of the last 5 points.
  The map $B \dashrightarrow \M$ lifts to
  \begin{equation}\label{eqn:b1lift}
    B \to \M^\dagger / S_5,
  \end{equation}
  where $b \in B$ is mapped to $(z_0, z_1 + \dots + z_5)|_b$.
  Let us compute the degree of this map.
  Given a general $(y_0, y_1 + \dots + y_5)$ in $\M^\dagger / S_5$, there is a unique conic $Q$ through $y_1 + \dots + y_5$.
  Note that there exists an automorphism of $\P^2$ that takes $(C, z_0)$ to $(Q, y_0)$, and such an automorphism is unique up to automorphisms preserving $(C, z_0)$.
  Suppose, under one such automorphism, $y_1 + \dots + y_5 \subset Q$ is taken to $z_1' + \dots + z_5' \subset C$.
  We must find out how many points of $B$ are equivalent to $z_1' + \dots + z_5'$ up to an automorphism of $\P^2$ preserving $(C, z_0)$.
  Let $w_0 + w_\infty \subset C$ be the scheme $\Polar_C(z_0) \cap C$.
  Then we have
   \[\Aut(\P^2, C, z_0) = \Aut(C, w_0 + w_{\infty}) \cong \G_m \rtimes \Z/2\Z. \]
   Choose an isomorphism $C \cong \P^1$ such that $w_0$ and $w_\infty$ are identified with $[0:1]$ and $[1:0]$.
   Then the $\G_m$ corresponds to the diagonal $\G_m \subset \PGL_2$ and the $\Z/2\Z$ corresponds to the swapping of the two homogeneous coordinates.
   Write $z_1' + \dots + z_5'$ as the vanishing locus of a binary quintic form $F$.
   It is easy to see that under the action of the $\G_m$, the closure of the orbit of $V(F)$ in $C^{[5]} \cong \P^5$ is a rational normal curve.
   Hence, a general hyperplane $B$ intersects the orbit in 5 points.
   Accounting for the additional $\Z/2\Z$, we conclude that there are precisely 10 points of $B$ equivalent to $z_1' + \dots + z_5'$ up to an automorphism of $\P^2$ preserving $(C, z_0)$.
   Thus, the degree of the map in \eqref{eqn:b1lift} is 10.

   Since the degree of $\M^\dagger \to \M$ is $72 \times 6!$, the degree of $B \to \M$ is
   \[72 \times 6! / 5! \times 10 = 4320.\]
\end{proof}

Using \autoref{prop:goodisgood} and the computation of the Chern classes and the degree, we obtain the following relation on the undetermined coefficients of $[\Orb(X)]$ in
\eqref{eqn:chernexpansion} for a general cubic surface $X$:
\begin{align}
  \label{eq:relation1}
  16 \cdot a_{1^4} + 4 \cdot a_{1^2 2} + a_{2^2} = 4320.
\end{align}


\subsection{The second test family}
\label{sec:family-b_2}
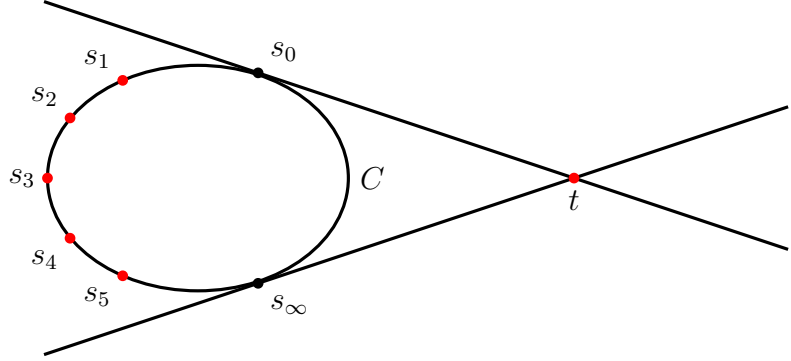
\begin{figure}
    \centering
    \begin{tikzpicture}[very thick]
  \draw (0,0) ellipse (2 and 1.5) (2,0) node [right] {$C$};
  \draw[shorten >=-3cm, shorten <=-3cm] (0.8, 1.4) -- (5,0);
  \draw[shorten >=-3cm, shorten <=-3cm] (0.8, -1.4) -- (5,0);

  \draw[red, fill] (5,0) circle (0.05) node [black, below] {$t$};
  \draw[fill] (0.8, 1.4) circle (0.05) node [above right] {$s_0$};
  \draw[fill] (0.8, -1.4) circle (0.05) node [below right] {$s_\infty$};

  \draw[red, fill]
  (-1, 1.3) circle (0.05) node [black, above left] {$s_1$}
  (-1.7, 0.8) circle (0.05) node [black, above left] {$s_2$}
  (-2, 0) circle (0.05) node [black, left] {$s_3$}
  (-1, -1.3) circle (0.05) node [black, below left] {$s_5$}
  (-1.7, -0.8) circle (0.05) node [black, below left] {$s_4$};
\end{tikzpicture}

    \caption{
      In the second test family, we blow up the plane at the points $\{s_{1}, \dots, s_{5}, t\}$ as the seven points $\{s_1, \dots, s_5, s_0, s_\infty\}$ vary freely on a conic $C$.}
    \label{fig:FamilyB2}
  \end{figure}
Forget the notation introduced in \autoref{sec:family-b_1}.
We take a smooth conic $C \subset \P^2$ and 7 distinct marked points $s_0, s_1, \dots, s_5, s_\infty$ on $C$.
Let $t \in \P^2$ be the pole, with respect to $C$, of the line joining $s_0$ and $s_\infty$.
We blow up the six points $s_1,\dots, s_5, t$ to get a cubic surface (see \autoref{fig:FamilyB2}).
As the 7 points vary on $C$, we get a 4-parameter family of cubic surfaces.

We now make the construction more precise, and show how to use it construct a family of cubic surfaces over $\overline {\M}_{0,7}$.
Let $\phi \colon \mathcal C \to \overline \M_{0,7}$ be the universal curve with the universal sections denoted by $\sigma_0, \sigma_1, \dots, \sigma_5, \sigma_\infty$.
Let $\mathcal L$ be the line bundle $\sigma_\infty^*\left(\omega_\phi\right)$ on $\overline \M_{0,7}$.
As is customary, we denote its Chern class by $\psi_\infty$.
Consider the rank 2 vector bundle
\[ \mathcal A = \phi_* \O_{\mathcal C}\left(\sigma_\infty\right).\]
It is easy to check that the evaluation maps yield an isomorphism
\begin{align*}
  \mathcal A &\xrightarrow{\sim} \sigma_\infty^*\O_{\mathcal C}\left( \sigma_\infty \right) \oplus \sigma_0^*\O_{\mathcal C}\left( \sigma_\infty \right)\\
  &= \mathcal L^{-1} \oplus \O_{\overline \M_{0,7}}.
\end{align*}
Set $\overline {\mathcal C} = \P \mathcal A^\vee$ with the structure map $\overline \phi \colon \overline {\mathcal C} \to \overline \M_{0,7}$.
The evaluation map $\phi^*\mathcal A \to \O_{\mathcal C}\left(\sigma_\infty\right)$ yields a map $\gamma \colon \mathcal C \to \overline {\mathcal C}$.
The effect of $\gamma$ on a stable curve $(C, s_0, \dots, s_\infty)$ is to contract all irreducible components of $C$ not containing the marked point $s_\infty$.
The unique remaining $\P^1$ is identified with the corresponding fiber of $\overline \phi$.
Set $\overline \sigma_i = \sigma_i \circ \gamma$; these $\overline \sigma_i$ are sections of $\overline \phi \colon \overline {\mathcal C} \to \overline \M_{0,7}$.
They satisfy the following properties
\begin{itemize}
\item $\overline{\sigma}_{\infty}$ is disjoint from all the other sections, and
\item in each fiber of $\overline \phi$, the union of all seven sections
  is supported on at least three points.
\end{itemize}
We now invoke the relative 2-Veronese embedding
\[
\iota \colon \overline {\mathcal C} \to \mathcal P := \P \Sym^2 \mathcal A^\vee.
\]
Let $\rho \colon \mathcal P \to \overline{\M}_{0,7}$ be the structure morphism.
Then $\overline {\mathcal C} \subset \mathcal P$ is a family of smooth conics with 7 marked points $\sigma_0, \dots, \sigma_\infty$ satisfying the two properties above.

Let $\tau \colon \overline \M_{0,7} \to \mathcal P$ be the section obtained by fiber-wise taking the pole with respect to $\overline {\mathcal C}$ of the line joining the points $\overline \sigma_0$ and $\overline \sigma_\infty$.
Here is an alternate description of $\tau$.
The direct sum decomposition $\mathcal A = \O \oplus \mathcal L^{-1}$ gives the decomposition
\[ \Sym^2 \mathcal A = \O \oplus \mathcal L^{-1} \oplus \mathcal L^{-2}.\]
In $\P \mathcal A^\vee$, the sections $\overline \sigma_0$ and $\overline \sigma_\infty$ correspond to the projections $\mathcal A \to \O$ and $\mathcal A \to \mathcal L^{-1}$, respectively.
Hence, in the Veronese embedding, they correspond to the projections $\Sym^2 A \to \O$ and $\Sym^2 A \to \mathcal L^{-2}$, respectively.
Using the quadratic form on $\Sym^2 \mathcal A$ given by the Veronese conic, it is easy to check that the section $\tau$ is given by the third projection $\Sym^2 A \to \mathcal L^{-1}$.

We let $\Sigma \subset \overline{\mathcal C}$ denote the closed subscheme associated to the divisor $\overline \sigma_1 + \dots + \overline \sigma_5$ and $\mathcal Z \subset \mathcal P$ the disjoint union $\Sigma \sqcup \tau$.
By \autoref{prop:B1}, $\mathcal Z \subset \mathcal P$ is a family of admissible subschemes.
We let
\[ \pi \colon \mathcal X \to \overline \M_{0,7}\]
be the family of cubic surfaces associated to $\mathcal Z \subset \mathcal P$ (see \autoref{def:cubicZ}).

\begin{proposition}  \label{prop:goodnessB2}
  The family $\pi \colon \mathcal X \to \overline \M_{0,7}$ is good.
\end{proposition}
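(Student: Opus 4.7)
The plan is to show that every fiber $\mathcal X_b$ has a finite automorphism group, whence goodness follows from \autoref{ex:finaut}. By construction, $\mathcal X_b$ is the cubic surface associated to the admissible length-$6$ subscheme $\mathcal Z_b = \Sigma_b \sqcup \tau_b \subset \mathcal P_b \cong \P^2$, with $\Sigma_b = \overline\sigma_1(b) + \cdots + \overline\sigma_5(b)$ a length-$5$ divisor on the smooth conic $C := \overline{\mathcal C}_b$ and $\tau_b$ the pole, with respect to $C$, of the chord $\overline{p_0 p_\infty}$, where I write $p_i := \overline\sigma_i(b)$. By \autoref{prop:good}, it will suffice to show that $\Aut(\P^2, \mathcal Z_b)$ is finite for every $b \in \overline{\M}_{0,7}$.

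First, I would chase any $\sigma \in \Aut(\P^2, \mathcal Z_b)$. The length-$5$ subscheme $\Sigma_b$ lies on $C$, and by Bezout any two distinct conics meet in a scheme of length $4$, so $C$ is the unique conic through $\Sigma_b$, forcing $\sigma(C) = C$. Since $\sigma$ also fixes $\tau_b$, it preserves the polar $\Polar_C(\tau_b) = \overline{p_0 p_\infty}$, and hence the unordered pair $\{p_0, p_\infty\} = \overline{p_0 p_\infty} \cap C$. The automorphisms of $\P^2$ preserving $C$ together with such an unordered pair form $\G_m \rtimes \Z/2\Z$, with $\G_m$ the pointwise stabilizer of $(p_0, p_\infty)$ on $C \cong \P^1$. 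It then suffices to show that $\Sigma_b$ is not $\G_m$-invariant, so that $\Aut(\P^2, \mathcal Z_b) \cap \G_m$ is finite.

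The $\G_m$-invariant length-$5$ subschemes of $C$ are precisely those supported on the two fixed points $\{p_0, p_\infty\}$. The first of the two properties of $\overline{\mathcal C}$ noted after its construction puts $p_\infty$ outside the support of $\Sigma_b$, so $\G_m$-invariance of $\Sigma_b$ would force $\Sigma_b = 5 p_0$, i.e., all six sections $\overline\sigma_0, \dots, \overline\sigma_5$ colliding at $p_0$ over $b$. This would contradict the second property, which says the seven sections are supported on at least three points of $\overline{\mathcal C}_b$. Hence $\Sigma_b$ is never $\G_m$-invariant, and the proof is complete.

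The main obstacle I expect is confirming those two construction properties of $\overline{\mathcal C}$ over the entire boundary of $\overline{\M}_{0,7}$, especially the ``three support points'' condition at the deepest strata. I plan to deduce it from the stability condition on $7$-pointed genus-zero curves: the component $C_0$ of the stable curve carrying $\sigma_\infty$ must have at least three special points, so removing $\sigma_\infty$ leaves at least two others, each of which --- whether a marked point of $\{\sigma_0, \dots, \sigma_5\}$ on $C_0$ or a node along which a subtree gets contracted by $\gamma$ --- produces a distinct image among $\overline\sigma_0(b), \dots, \overline\sigma_5(b)$ on $\overline{\mathcal C}_b$. Once this uniformity is in hand, the rest of the argument goes through unchanged across $\overline{\M}_{0,7}$.
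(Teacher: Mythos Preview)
Your approach is correct and is essentially the paper's own proof: the paper simply writes ``All fibers have finite automorphism groups by the same proof as \autoref{prop:goodnessB1}'', and you have reconstructed that argument with the modifications appropriate to this family, together with a verification of the two stated properties of the sections $\overline\sigma_i$ over the boundary (which the paper asserts without proof).

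One small point to tighten: the step ``$C$ is the unique conic through $\Sigma_b$, forcing $\sigma(C)=C$'' is not quite immediate, since a priori $\sigma$ only preserves $\mathcal Z_b$ and could in principle send $\tau_b$ into $\Sigma_b$. The fix is exactly what the proof of \autoref{prop:goodnessB1} does: first pass to the finite-index subgroup of $\Aut(\P^2,\mathcal Z_b)$ acting trivially on the support; this subgroup fixes $\tau_b$, hence preserves $\Sigma_b = \mathcal Z_b \setminus \{\tau_b\}$, and then your argument runs verbatim.
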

\begin{proof}
  All fibers have finite automorphism groups by the same proof as \autoref{prop:goodnessB1}.
\end{proof}

We now compute the Chern classes of the anti-canonical section bundle.
\begin{proposition}\label{lem:v2}
  Let $\mathcal V = \pi_* \omega_{\pi}^{-1}$ and let $v_i = c_i(\mathcal V)$.
  Then we have
  \[
    \begin{split}
      \int_{\overline \M_{0,7}}v_{1}^{4} = 625&, \quad \int_{\overline \M_{0,7}}v_{1}^{2}v_{2} = 125,\quad \int_{\overline \M_{0,7}}v_{1}v_{3} = -25,\\
      &\int_{\overline \M_{0,7}}v_2^{2} = 25,\quad \int_{\overline \M_{0,7}}v_{4} = -6.
    \end{split}
  \]
\end{proposition}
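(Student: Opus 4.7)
The plan is to compute $\mathcal V = \pi_\ast\omega_\pi^{-1}$ by realizing it as a pushforward from the relative $\P^2$-bundle $\rho\colon\mathcal P\to\overline{\M}_{0,7}$, splitting it according to the decomposition $\mathcal Z = \Sigma\sqcup\tau$, identifying each piece by explicit projective-bundle calculus, and then integrating against $[\overline{\M}_{0,7}]$ using known $\psi$-class intersection numbers.

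By \autoref{prop:admissibleblowup} applied in families (cf.\ \autoref{prop:blow-ups-and-base-change}), we have
\[
  \mathcal V = \rho_\ast\bigl(\mathcal I_{\mathcal Z}\otimes\omega_\rho^{-1}\bigr).
\]
Because $\tau$ is the pole of the secant line through the disjoint sections $\overline\sigma_0$ and $\overline\sigma_\infty$, it is disjoint from $\overline{\mathcal C}$; hence $\mathcal I_{\overline{\mathcal C}}\cdot\mathcal I_\tau\subset\mathcal I_{\mathcal Z}$ with quotient $\mathcal I_{\Sigma/\overline{\mathcal C}}$. Tensoring with $\omega_\rho^{-1}$ and pushing forward---checking the relevant $R^1\rho_\ast$ vanish fiberwise from cubic-on-$\P^2$ considerations---yields a short exact sequence of rank-$2$ bundles on $\overline{\M}_{0,7}$,
\[
  0 \to \mathcal U \to \mathcal V \to \mathcal Q \to 0,
\]
where $\mathcal U = \rho_\ast(\mathcal I_{\overline{\mathcal C}}\mathcal I_\tau\otimes\omega_\rho^{-1})$ and $\mathcal Q = \overline\phi_\ast(\mathcal I_{\Sigma/\overline{\mathcal C}}\otimes\omega_\rho^{-1}|_{\overline{\mathcal C}})$.

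To make each summand explicit, I use $\mathcal A = \O\oplus\mathcal L^{-1}$ together with the canonical formulas $\omega_\rho^{-1} = \O_{\mathcal P}(3)\otimes\rho^\ast\det\Sym^2\mathcal A^\vee$ and $\O_{\mathcal P}(1)|_{\overline{\mathcal C}} = \O_{\P\mathcal A^\vee}(2)$ (the 2-Veronese). Adjunction on $\overline{\mathcal C}\subset\mathcal P$ then pins down the divisor class $[\overline{\mathcal C}]\in A^1(\mathcal P)$. For $\mathcal U$, I use the further sequence
\[
  0 \to \mathcal I_{\overline{\mathcal C}}\mathcal I_\tau\otimes\omega_\rho^{-1}
  \to \mathcal I_{\overline{\mathcal C}}\otimes\omega_\rho^{-1}
  \to \tau_\ast\tau^\ast\bigl(\mathcal I_{\overline{\mathcal C}}\otimes\omega_\rho^{-1}\bigr) \to 0,
\]
exact because $\tau\cap\overline{\mathcal C}=\emptyset$. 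Here $\rho_\ast(\mathcal I_{\overline{\mathcal C}}\otimes\omega_\rho^{-1})$ is an explicit twist of $\Sym^2\mathcal A$, and $\tau^\ast\O_{\mathcal P}(1)$ is read off the description of $\tau$ as a projection of $\Sym^2\mathcal A$, realizing $\mathcal U$ as the kernel of an explicit map of known bundles. For $\mathcal Q$, the line bundle $\mathcal I_{\Sigma/\overline{\mathcal C}}\otimes\omega_\rho^{-1}|_{\overline{\mathcal C}}$ is an explicit twist of $\O_{\overline{\mathcal C}}(6)\otimes\O(-\Sigma)$ on the $\P^1$-bundle $\overline{\mathcal C}=\P\mathcal A^\vee$, whose $\overline\phi$-pushforward is computable by standard $\P^1$-bundle formulas.

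Combining via $c(\mathcal V) = c(\mathcal U)\,c(\mathcal Q)$ expresses the Chern classes of $\mathcal V$ as polynomials in $\psi_\infty = c_1(\mathcal L)$ and the section classes $[\overline\sigma_i]$; pushing down by $\overline\phi$ converts these into standard $\psi$-classes and boundary divisors on $\overline{\M}_{0,7}$, whose monomials integrate to known quantities via the closed-form genus-zero formula $\int_{\overline{\M}_{0,n}}\prod\psi_i^{a_i} = \binom{n-3}{a_1,\dots,a_n}$ together with boundary contributions. The main obstacle will be the bookkeeping of the middle step: getting $[\overline{\mathcal C}]\in A^1(\mathcal P)$ exactly right, consistently tracking line-bundle twists through two successive $\P$-bundle pushforwards, and accounting for the fact that the sections $\overline\sigma_1,\dots,\overline\sigma_5$ can coincide on boundary strata so that $\Sigma$ is non-reduced there. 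Once these data are in hand, the five final integrals are routine.
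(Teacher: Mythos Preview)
Your approach is correct and begins exactly as the paper does, with $\mathcal V = \rho_*(\mathcal I_{\mathcal Z}\otimes\omega_\rho^{-1})$ and the splitting $\mathcal Z = \Sigma\sqcup\tau$. The paper, however, works entirely in the Grothendieck group rather than with exact sequences of bundles: it writes $[\mathcal V] = [\rho_*\omega_\rho^{-1}] - [\omega_\rho^{-1}|_\tau] - [\rho_*(\omega_\rho^{-1}|_\Sigma)]$, and computes the last term by the difference $\overline\phi_*D - \overline\phi_*D(-\Sigma)$ using a simple inductive lemma (\autoref{lem:push}) applied with the section $\overline\sigma_\infty$. Because $\overline\sigma_\infty$ is disjoint from $\Sigma$ and from all other $\overline\sigma_i$, every restriction of $\O(\Sigma)$ that appears is trivial, and the answer collapses to $[\mathcal V] = \mathcal L^{-3}+\mathcal L+\mathcal L^{-1}+\mathcal L^{-2}$, a sum of powers of a single line bundle. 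All five integrals then follow from the single number $\int_{\overline{\M}_{0,7}}\psi_\infty^4 = 1$. Your anticipated bookkeeping with the classes $[\overline\sigma_i]$, boundary strata, and possible collisions of the $\overline\sigma_i$ is therefore avoidable: the choice of $\sigma_\infty$ as the section in the pushforward lemma is what makes these complications disappear. Your route would reach the same numbers, but with substantially more work.
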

For the proof, we use the following.
\begin{lemma}\label{lem:push}
  Let $\pi \colon P \to B$ be a $\P^1$-bundle with a section $\sigma$.
  Let $D \subset P$ be a divisor of relative degree $d$ with respect to $\pi$ with $d \geq -1$.
  Then, in the Grothendieck group of $B$, we have
  \[ [\pi_* \O(D)] = [\sigma^*\O(D)] + \dots + [\sigma^*\O(D-(d-1)\sigma)].\]
\end{lemma}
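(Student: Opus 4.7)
The plan is to prove this by induction on $d$, driven by the short exact sequence
\[
0 \to \O(D - \sigma) \to \O(D) \to \O(D)|_\sigma \to 0
\]
of sheaves on $P$, in which $\sigma$ denotes both the section and the divisor given by its image. The base case $d = -1$ is immediate: on each fiber $\O(D)$ restricts to $\O(-1)$ on $\P^1$, so $\pi_*\O(D) = 0$ and the right-hand side of the formula is the empty sum.

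For the inductive step, I would push the displayed sequence forward along $\pi$. Because the image of $\sigma$ maps isomorphically to $B$ under $\pi$, the third term pushes forward to $\sigma^*\O(D)$. The hypothesis $d \geq 0$ guarantees that both $D$ and $D - \sigma$ have relative degree $\geq -1$, so cohomology and base change applied to the $\P^1$-fibers gives $R^1\pi_*\O(D) = R^1\pi_*\O(D - \sigma) = 0$. The long exact sequence in cohomology therefore collapses to
\[
0 \to \pi_*\O(D - \sigma) \to \pi_*\O(D) \to \sigma^*\O(D) \to 0,
\]
which yields the $K$-theoretic identity $[\pi_*\O(D)] = [\pi_*\O(D - \sigma)] + [\sigma^*\O(D)]$. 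Applying the inductive hypothesis to $D - \sigma$, whose relative degree is $d - 1 \geq -1$, completes the induction.

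The argument is essentially routine; the only mild care required is the cohomology vanishing on $\P^1$-fibers (standard, since $H^1(\P^1, \O(k)) = 0$ for $k \geq -1$) and keeping the index bookkeeping straight as the recursion unwinds one $\sigma$ at a time, terminating when the relative degree drops to $-1$.
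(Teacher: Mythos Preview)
Your proof is correct and follows essentially the same approach as the paper: induction on $d$ with base case $d=-1$, and the inductive step driven by pushing forward the short exact sequence $0 \to \O(D-\sigma) \to \O(D) \to \O(D)|_\sigma \to 0$. You have supplied a bit more detail than the paper (the explicit vanishing of $R^1\pi_*$ via cohomology on the fibers), but the argument is the same.
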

\begin{proof}
  We induct on $d$.
  Both sides are 0 for $d = -1$.
  The exact sequence
  \[ 0 \to \O(D-\sigma) \to \O(D) \to \O(D)|_\sigma \to 0\]
  pushed-forward to $B$ provides the induction step.
\end{proof}
\begin{proof}[Proof of \autoref{lem:v2}]
  Recall that $\mathcal X \to \overline \M_{0,7}$ is the cubic surface associated to $\mathcal Z \subset \mathcal P$ in the $\P^2$-bundle $\rho \colon \mathcal P \to \overline \M_{0,7}$.
  By construction, we have
  \[ \mathcal V = \rho_*(\mathcal I_{\mathcal Z} \otimes \omega^{-1}_\rho).\]
  Hence, in the Grothendieck group of $\overline \M_{0,7}$, we have
  \begin{equation}\label{eqn:diff}
    [\mathcal V] = \rho_*(\omega_\rho^{-1}) - \rho_*(\omega_\rho^{-1}|_{\mathcal Z}).
  \end{equation}
  By the relative Euler sequence, we see that $\omega^{-1}_\rho$ is isomorphic to $\O_{\mathcal P}(3) \otimes \rho^* \det (\Sym^2 \mathcal A)^\vee$, and hence the first term in \eqref{eqn:diff} is
  \begin{equation}\label{eqn:diff1}
    \rho_*(\omega_\rho^{-1}) = \Sym^3(\Sym^2 \mathcal A) \otimes \mathcal L^3.
  \end{equation}
  Since $\mathcal Z$ is the disjoint union $\Sigma \cup \tau$, the second term is a sum
  \[ \rho_*(\omega_\rho^{-1}|_{\mathcal Z}) = \rho_*(\omega_\rho^{-1}|_{\tau}) + \rho_*(\omega_\rho^{-1}|_{\Sigma}). \]
  Since $\tau$ is defined by the surjection $\Sym^2 \mathcal A \to \mathcal L^{-1}$, the restriction $\omega_\rho^{-1}|_\tau$ is trivial.
  To compute the restriction to $\Sigma$, we view $\Sigma$ as a divisor in $\overline{\mathcal C} = \P A^\vee$ and let $D = \omega_\rho^{-1}|_{\overline C} = \O(6) \otimes \overline \phi^* \mathcal L^3$.
  Thus, in the Grothendieck group, we have
  \begin{equation}\label{eqn:diff2}
    \rho_*(\omega_\rho^{-1}|_{\Sigma}) = \overline\phi_*\left(D\right) - \overline\phi_* \left( D(-\Sigma)\right).
  \end{equation}
  We now use \autoref{lem:push} with the section $\sigma_\infty$ to compute the terms on the right hand side of \eqref{eqn:diff2}.
  Putting this together with \eqref{eqn:diff} and \eqref{eqn:diff1}, we get
  \[ [\mathcal V] = \mathcal L^{-3}+ \mathcal L^{1}+ \mathcal L^{-1}+ \mathcal L^{-2}\]
  in the Grothendieck group.
  The result is now a simple computation using $c_1(\mathcal L) = \psi_\infty$ and $\int_{\overline \M_{0,7}} \psi_\infty^4 = 1$.
\end{proof}

Having computed the Chern classes, we find the degree of the map $\overline \M_{0,7} \dashrightarrow \M$.
\begin{proposition}\label{prop:deg2}
  The degree of the map $\overline \M_{0,7} \dashrightarrow \M$, induced by $\pi \colon \mathcal X \to \overline \M_{0,7}$ is $2 \times 72 \times 6!$.
\end{proposition}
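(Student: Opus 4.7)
The plan is to factor the rational map $\mu \colon \overline{\M}_{0,7} \dashrightarrow \M$ through the marked moduli space $\M^\dagger$ of pairs $(S, (\ell_1, \dots, \ell_6))$ of a smooth cubic surface together with an ordered six, which was introduced in \autoref{sec:classical-facts}. Since $\deg(\sigma \colon \M^\dagger \to \M) = 72 \cdot 6!$, it will be enough to produce a lift $\widetilde\mu \colon \overline{\M}_{0,7} \dashrightarrow \M^\dagger$ and show that it has degree $2$; the claim will then follow by multiplicativity of degree.

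The lift is the one dictated by the construction of the family. On the open locus of smooth $7$-pointed $\P^1$'s with distinct markings, I send $(C, \sigma_0, \sigma_1, \dots, \sigma_5, \sigma_\infty)$ to $(\P^2, (\sigma_1, \dots, \sigma_5, t))$, where $C \hookrightarrow \P^2$ is the $2$-Veronese embedding implicit in the construction of $\overline{\mathcal C} \subset \mathcal P$, and $t = \Pole_C(\overline{\sigma_0 \sigma_\infty})$. By the very definition of $\pi \colon \mathcal X \to \overline{\M}_{0,7}$ via \autoref{def:cubicZ}, the associated cubic surface is the blow-up of $\P^2$ at these six ordered points, so $\sigma \circ \widetilde\mu = \mu$.

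To compute $\deg \widetilde\mu$, I count preimages of a general point $(\P^2, (p_1, \dots, p_6)) \in \M^\dagger$. Any preimage $(C, \sigma_0, \sigma_1, \dots, \sigma_5, \sigma_\infty)$ must satisfy $\sigma_i = p_i$ for $i=1,\dots,5$ (the orderings match on both sides) and $\Pole_C(\overline{\sigma_0\sigma_\infty}) = p_6$. For generic $p_1,\dots,p_5$, there is a unique smooth conic $C$ through them, so $C$ and $\sigma_1,\dots,\sigma_5$ are uniquely determined. Next, the polar line $L = \Polar_C(p_6)$ is determined, and since $p_6 \notin C$ for generic data, $L$ meets $C$ in two distinct points; the pair $\{\sigma_0, \sigma_\infty\}$ is forced to be this pair, leaving exactly two orderings. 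Hence the generic fiber of $\widetilde\mu$ has cardinality $2$, so $\deg \widetilde\mu = 2$ and $\deg \mu = 2 \cdot 72 \cdot 6!$.

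The only point that needs a small amount of care is confirming that $\widetilde\mu$ is regular and quasi-finite on a dense open subset of $\overline{\M}_{0,7}$ so that the above fiber count really equals the degree; this is immediate from the explicit formulas, since away from the boundary and the codimension $\geq 1$ loci where $p_6$ lies on $C$ or two of the seven points coincide, every step of the construction is a morphism. I expect no serious obstacle beyond this bookkeeping.
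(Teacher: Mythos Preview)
Your proposal is correct and follows essentially the same approach as the paper: factor through $\M^\dagger$, use that $\deg(\M^\dagger \to \M) = 72 \cdot 6!$, and compute the degree of the lift by observing that for a general $(p_1,\dots,p_6)$ the unique conic through $p_1,\dots,p_5$ and the polar line of $p_6$ determine $\{\sigma_0,\sigma_\infty\}$ up to the two possible orderings. The paper's proof is slightly terser but otherwise identical.
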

\begin{proof}
  The map lifts to $\overline \M_{0,7} \dashrightarrow \M^\dagger$.
  Since the degree of $\M^\dagger \to \M$ is $72 \times 6!$, we must prove that the degree of $\overline \M_{0,7} \dashrightarrow \M^\dagger$ is $2$.

  Recall that $\M^\dagger$ is isomorphic to the configuration space of 6 points in $\P^2$.
  Given a general 6-tuple of points $(s_1,\dots, s_5, y)$ in $\P^2$, there is a unique conic $C$ passing through $s_1,\dots, s_5$.
  Let $x$ and $y$ be the two points of $\Polar_C(t) \cap C$.
  Then the two pre-images of $(s_1,\dots, s_5, t)$ in $\overline \M_{0,7}$ are $(C, x, s_1, \dots, s_5, y)$ and $(C,y, s_1, \dots, s_5, x)$.
  The proof is thus complete.
\end{proof}

Using \autoref{prop:goodisgood} and the computation of the Chern classes and the degree, we obtain the following relation on the undetermined coefficients of $[\Orb(X)]$ in
\eqref{eqn:chernexpansion} for a general cubic surface $X$:
\begin{align}
  \label{eq:relation2}
  625 a_{1^{4}} + 125 a_{1^{2}2} - 25a_{13} + 25a_{2^2} - 6 a_{4} = 2 \times 72 \times 6!.
\end{align}


\subsection{The third test family}
\label{sec:family-b_3}
\begin{figure}
    \centering
    \begin{tikzpicture}[thick]
  \draw [domain=0:1.4, samples=100] 
  plot ({\x^2}, {\x^3} )
  plot ({\x^2}, {-\x^3} );
  \draw (2, -3) node {$C$};
  \draw[red, fill] (1.3^2, 1.3^3) circle (0.05) node [black, left] {$t_1$};
  \draw[red, fill] (1.18^2, 1.18^3) circle (0.05) node [black, left] {$t_2$};
  \draw[red, fill] (1.0^2, 1.0^3) circle (0.05) node [black, left] {$t_3$};
  \draw[red, fill] (1.2^2, -1.2^3) circle (0.05) node [black, left] {$t_5$};
  \draw[red, fill] (0.9^2, -0.9^3) circle (0.05) node [black, left] {$t_4$};
\end{tikzpicture}

    \caption{In the family $B_3$, we blow up the points $t_i$ for $i = 1, \dots, 5$ and a fixed point $t_\infty$ at infinity (not shown) as the points $\{t_1, \dots, t_5\}$ move freely on a fixed cuspidal cubic $C$ with a flex at $t_\infty$.}
    \label{fig:FigureB3}
\end{figure}
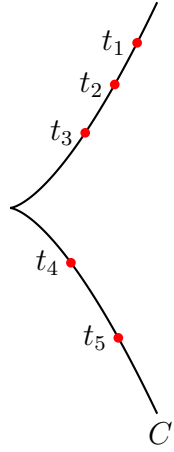
We take a cuspidal plane cubic $C \subset \P^2$ and let $t_\infty$ be its unique flex point.
We vary 5 points $\{t_1, \dots, t_5\}$ on $C$ and blow up the plane at $\{t_1, \dots, t_5, t_\infty\}$ to get a cubic surface (see \autoref{fig:FigureB3}).
Since $\Aut(C)$ is one dimensional, this gives us a 4-parameter family of cubic surfaces.

We now convert the above construction into a good family of cubic surfaces over a complete base.
We take our base to be $\widehat \M_{0,7}$, one of the compactifications of $\M_{0,7}$ constructed by Hassett \cite{has:03}.
Precisely, set
\[
  \mathbf w = \left(1-\epsilon, \epsilon, \dots, \epsilon, 1-3 \epsilon \right), \text{ where } 0 < \epsilon \ll 1,
\]
and let $\widehat \M_{0,7}$ be the space of $\mathbf w$-weighted stable pointed curves $(C, s_0, s_1, \dots, s_5, s_\infty)$.
Such curves are easy to describe:
\begin{itemize}
\item $C$ must be a smooth rational curve,
\item $s_{0}$ and $s_{\infty}$ must be distinct,
\item at most one $s_1, \dots, s_5$ can coincide with $s_{0}$,
\item at most three of the $s_1, \dots, s_5$ can coincide with $s_{\infty}$.
\end{itemize}
There are no constraints on how many of the the points $s_1, \dots, s_5$ coincide with each other.
Let $\nu \colon C \to \P^2$ be the map which is birational onto its image, whose image is a cuspidal cubic, and under which $s_0$ maps to the unique cusp point and $s_\infty$ to the unique flex point.
Let $Z \subset C$ be the subscheme corresponding to the divisor $s_1 + \dots + s_5 + s_\infty$.
Since at most one of the points $s_1, \dots, s_5$ can coincide with $s_0$, it follows that $\nu \colon Z \to \P^2$ is a closed embedding and the image is a curvilinear subscheme.
It is also easy to check that the image is cut out by cubics.
That is, $Z \subset \P^2$ is an admissible subscheme (see \autoref{def:admissible}).
We let $X_Z$ be the associated cubic surface (see \autoref{def:cubicZ}).

We now formalize the construction as a family over $\widehat \M_{0,7}$.
Let $\phi \colon \mathcal C \to \widehat \M_{0,7}$ be the universal curve with universal sections $\sigma_0, \sigma_1, \dots, \sigma_5, \sigma_\infty$.
Observe that $\phi$ is a $\P^1$-bundle.
Consider the rank 2 bundle
\begin{equation}\label{eqn:hatA}
  \mathcal A = \phi_* \O_{\mathcal C}(\sigma_\infty).
\end{equation}
As before, the evaluation maps yield an isomorphism
\begin{align*}
  \mathcal A &\xrightarrow{\sim} \sigma_\infty^*\O_{\mathcal C}\left( \sigma_\infty \right) \oplus \sigma_0^*\O_{\mathcal C}\left( \sigma_\infty \right)\\
  &= \mathcal L^{-1} \oplus \O_{\widehat \M_{0,7}},
\end{align*}
where $\mathcal L = \sigma_\infty^*(\omega_\phi)$.
We have a map $\mathcal C \to \P \mathcal A^\vee$ induced by the surjection $\phi^* \mathcal A \to \O_{\mathcal C}(\sigma_\infty)$, and since $\mathcal C \to \widehat \M_{0,7}$ is a $\P^1$-bundle, this map is an isomorphism.
Observe that the sections $\sigma_0$ and $\sigma_\infty$ correspond to the projections $\mathcal A \to \O$ and $\mathcal A \to \mathcal L^{-1}$, respectively.
Now consider the summand
\begin{equation}\label{eqn:hatS}
  \mathcal S := \O_{\widehat \M_{0,7}} \oplus \mathcal L^{-2} \oplus \mathcal L^{-3} \subset \Sym^3 \mathcal A,
\end{equation}
and denote by
\[ \nu \colon \mathcal C \to \P \mathcal S^\vee\]
the map induced by the surjection $\phi^* \mathcal S \to \O_{\mathcal C}(3\sigma_\infty)$.
Let $\rho \colon \P\mathcal S^\vee \to \widehat \M_{0,7}$ be the structure map.
Thanks to our choice of $\mathcal S$, fiber-wise the map $\nu$ maps $\mathcal C$ to cuspidal cubics with $s_0$ mapping to the cusp and $s_\infty$ to the flex.
Let $\mathcal Z \subset \mathcal C$ be the subscheme corresponding to the divisor $\sigma_1 + \dots + \sigma_\infty$.
Our family 
\[ \pi \colon \mathcal X \to \widehat \M_{0,7}\]
is the family of cubic surfaces associated to the family of admissible subschemes $\nu \colon \mathcal Z \to \P \mathcal S^\vee$.

\begin{proposition}\label{prop:goodness3}
  The family $\pi \colon \mathcal X \to \widehat \M_{0,7}$ is good.
\end{proposition}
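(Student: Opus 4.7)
My plan is to follow the template of \autoref{prop:goodnessB1}. By \autoref{ex:finaut} and \autoref{prop:good}, it suffices to show that $\Aut(\P^2, \mathcal Z_b)$ is finite for every $b \in \widehat{\M}_{0,7}$, where $\mathcal Z_b \subset \P^2$ denotes the image subscheme under $\nu$. Since the support of $\mathcal Z_b$ is finite, the group $\Aut(\P^2, \mathcal Z_b)$ acts on it through a finite quotient, so it is enough to show that the identity component $H^0$ is trivial, or equivalently that no positive-dimensional connected subgroup of $\PGL_3$ preserves $\mathcal Z_b$ scheme-theoretically.

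A useful preliminary observation comes from the Hassett weights $\mathbf{w} = (1 - \epsilon, \epsilon, \ldots, \epsilon, 1 - 3\epsilon)$: they force at most one of $s_1, \ldots, s_5$ to coincide with $s_0$, and at most three of them to coincide with $s_\infty$. Hence at least one marking $s_j$ is distinct from both $s_0$ and $s_\infty$, and its image in the cuspidal cubic $C_b$ is a smooth point $p$ that is neither the cusp nor the flex.

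Granting for the moment that $H^0$ preserves the cuspidal cubic $C_b$, the argument concludes just as in \autoref{prop:goodnessB1}. The group $\Aut(\P^2, C_b)$ is isomorphic to $\G_m$, acting on the smooth part $C_{b,\mathrm{smooth}} \cong \A^1$ by scaling, with fixed points only at the cusp (parameter $0$) and at the flex (parameter $\infty$). A positive-dimensional $H^0 \subset \G_m$ would be all of $\G_m$, and no such subgroup can fix the intermediate point $p$. Hence $H^0$ is trivial.

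The main obstacle is therefore to justify that $H^0$ preserves $C_b$. Because $\mathcal Z_b$ is curvilinear and contained in $C_b$, it encodes both the support and the tangent direction of $C_b$ at each of its points, and $H^0$ must preserve all of this data. Unlike in the conic case of \autoref{prop:goodnessB1}, $C_b$ is not uniquely determined by $\mathcal Z_b$ as a subscheme of $\P^2$: the three-dimensional linear system of cubics through $\mathcal Z_b$ contains a one-parameter family of cuspidal cubics. My plan is to perform a stratum-by-stratum analysis of $\widehat{\M}_{0,7}$ according to the coincidence pattern of the markings; in each stratum I would combine the tangent-line data at the support of $\mathcal Z_b$ with the higher-order curvilinear structure at the non-reduced points to show that only finitely many cuspidal cubics can carry the prescribed scheme-theoretic data, forcing the connected group $H^0$ to fix $C_b$. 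The hardest strata are the deepest ones, such as $s_1 = s_2 = s_3 = s_\infty$ with $s_4 = s_5$, where the support of $\mathcal Z_b$ is small and one must rely most heavily on the local curvilinear structure of $\mathcal Z_b$ to rule out positive-dimensional connected subgroups of $\PGL_3$ compatible with the prescribed tangent and higher-order data.
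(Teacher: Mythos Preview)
Your proposal is incomplete: you correctly reduce to showing that $\Aut(\P^2,\mathcal Z_b)$ is finite, but you do not actually prove this.  The heart of the argument---that the identity component $H^0$ must preserve the cuspidal cubic $C_b$---is only sketched as a ``plan'' for a stratum-by-stratum analysis, and you yourself flag the deepest strata as unresolved.  Since $C_b$ is \emph{not} determined by $\mathcal Z_b$ (there is a pencil of cuspidal cubics through any admissible length-$6$ scheme), this step is genuinely nontrivial, and you have not supplied it.

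The paper takes a substantially different route.  Rather than proving that every fiber has finite automorphism group, it shows directly that no fiber lies in $\partial\Orb(S)$ for $S$ smooth without Eckardt points.  The argument splits on the cycle structure of $\mathcal Z_b$.  When $\mathcal Z_b$ has no point of multiplicity $\geq 4$, the paper invokes \autoref{prop:cubictopoints} and \autoref{prop:limit4}, which say that any limit of a star-free $6$-point configuration in $\P^2$ must have four collinear points or a point of multiplicity $4$; since $\mathcal Z_b$ lies on a cubic, four distinct collinear points are impossible.  When $\mathcal Z_b$ has cycle $p+q+4r$ or $2p+4r$, the paper uses the finer orbit-closure results \autoref{prop:pq4r} and \autoref{prop:2p4r} (which rule out such limits when $T_rW$ avoids $p,q$), and only in the residual subcases does it fall back on a direct finiteness check or on \autoref{prop:isogood}.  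The case $p+5q$ is handled by a direct finiteness argument using the conic through $5q$.  Thus the paper's proof leans heavily on the machinery of \autoref{sec:orbclosure}, bypassing the need to control $\Aut(\P^2,\mathcal Z_b)$ in the hardest strata.
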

\begin{proof}
  We show that no fiber $\mathcal X_b$ is in the closure of the orbit of any smooth cubic surface $S$ without Eckardt points.
  Let $Z \subset \P^2$ be the image of $\nu \colon Z_b \to \P \mathcal S^\vee_b$.
  By construction, $Z$ does not have a point of multiplicity 6, and since $Z$ lies on a cubic, no four distinct points of $Z$ can be collinear.
  Suppose $Z$ does not have a point of multiplicity 4.
  Then we use \autoref{prop:cubictopoints} and \autoref{prop:limit4} to conclude that $\mathcal X_b$ is not in the orbit closure of $S$.

  We now treat the cases where $Z$ has a point of multiplicity 4 or 5.
  We repeatedly use the following fact about the geometry of the cuspidal cubic $C$: no line through the flex point is tangent to $C$, except the tangent line at the flex and the line joining the flex and the cusp.

  Suppose $Z = p + q + 4r \subset C$, where $p, q, r$ are distinct.
  Since $r$ occurs with multiplicity greater than 1 in $Z$, it must be distinct from the cusp point (so $4r$ is unambiguous).
  If the tangent line $T_rZ = T_rC$ does not contain $p$ or $q$, we appeal to \autoref{prop:pq4r} and \autoref{prop:2p4r}.
  Suppose $T_rZ$ contains $p$.
  Then $T_rZ$ cannot contain $q$, and hence $p, q, r$  are not collinear.
  By the construction of $Z$, one of $p,q,r$ must be the flex point $s_\infty$.
  Using the fact about tangent lines through the flex,  we see that the only possibility is $q = s_\infty$.
  Now $Z \subset C$ is uniquely determined up to an automorphism of $(\P^2, C)$.
  It is easy to write down such a $Z$ and check directly that $\Aut(\P^2, Z)$ is finite.
  By \autoref{prop:good}, $\Aut(\mathcal X_b)$ is also finite, and $\mathcal X_b$ is not in the orbit closure of $S$.
  Alternatively, we see that the cubic surface associated to $Z$ has a unique singular point, which is of type $D_4$.
  It is known that, up to isomorphism, there are exactly two cubic surfaces with singularity $D_4$ (see \cite[Lemma~4]{bru.wal:79}).
  One has a finite automorphism group and the other one is treated by \autoref{prop:isogood}.

  Suppose $Z = p + 5q$, where $p$ and $q$ are distinct.
  Then we must have $p = s_\infty$ and $q \not \in \{s_0, s_\infty\}$.
  Let $Q$ be the unique conic containing $5q$.
  We claim that $C$ cannot contain $p$.
  If it did, then we get $5(q-p) = 0 \in \Pic(C) \cong \G_a$, forcing $p = q$.
  We also claim that the two points of $\Polar_Q(p) \cap Q$ are distinct from $q$.
  This is because $Q$ and $C$ share the tangent line at $q$; this line does not pass through $p$, but the tangent lines to $Q$ at the points of $\Polar_Q(p)$ do pass through $p$.
  Now, any $\sigma \in \Aut(\P^2, Z)$ must preserve $C$ the three points $\Polar_Q(p) \cap Q \cup \{q\}$.
  Then $\Aut(\P^2, Z)$ and hence $\Aut(\mathcal X_b)$ is finite, and $\mathcal X_b$ is not in the orbit closure of $S$.
\end{proof}

It is time for enumerative computations.
\begin{proposition}\label{prop:v3}
  Let $\mathcal V = \pi_* \omega_{\pi}^{-1}$ and let $v_i = c_i(\mathcal V)$.
  Then on $\widehat \M_{0,7}$, we have
  \[
    \begin{split}
      \int_{}v_{1}^{4} = 3436&, \quad \int_{}v_{1}^{2}v_{2} = 1076,\quad \int_{}v_{1}v_{3} = 116,\\
      &\int_{}v_2^{2} = 316,\quad \int_{}v_{4} = 0.
    \end{split}
  \]
\end{proposition}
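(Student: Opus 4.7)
The plan is to follow the template of \autoref{lem:v2}. Since the family $\pi \colon \mathcal{X} \to \widehat \M_{0,7}$ arises from the admissible subscheme $\mathcal{Z} \subset \P\mathcal{S}^\vee$ in the $\P^2$-bundle $\rho \colon \P\mathcal{S}^\vee \to \widehat \M_{0,7}$, the bundle $\V = \pi_*\omega_\pi^{-1}$ equals $\rho_*(\mathcal{I}_\mathcal{Z} \otimes \omega_\rho^{-1})$. By \autoref{prop:admissibleblowup}, the higher pushforwards $R^1\rho_*(\mathcal{I}_\mathcal{Z} \otimes \omega_\rho^{-1})$ vanish, so the sequence $0 \to \mathcal{I}_\mathcal{Z} \otimes \omega_\rho^{-1} \to \omega_\rho^{-1} \to \omega_\rho^{-1}|_\mathcal{Z} \to 0$ gives
\[ [\V] = [\rho_*\omega_\rho^{-1}] - [\rho_*(\omega_\rho^{-1}|_\mathcal{Z})] \]
in the Grothendieck group of $\widehat \M_{0,7}$. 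The first term is immediate: the relative Euler sequence, combined with $\det\mathcal{S} = \mathcal{L}^{-5}$ (since $\mathcal{S} = \O \oplus \mathcal{L}^{-2} \oplus \mathcal{L}^{-3}$), gives $\omega_\rho^{-1} = \O(3) \otimes \rho^*\mathcal{L}^{5}$, so $[\rho_*\omega_\rho^{-1}] = \Sym^3\mathcal{S} \otimes \mathcal{L}^{5}$ decomposes as an explicit direct sum of powers of $\mathcal{L}$.

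For the second term, I restrict to $\mathcal{C}$ via $\nu$, using $\nu^*\O(1) = \O_\mathcal{C}(3\sigma_\infty)$. The K-theoretic identity $[\O_\mathcal{Z}] = [\O_\mathcal{C}] - [\O_\mathcal{C}(-\mathcal{Z})]$ reduces the computation to two pushforwards under $\phi \colon \mathcal{C} \to \widehat \M_{0,7}$: one of the line bundle $\O_\mathcal{C}(9\sigma_\infty) \otimes \phi^*\mathcal{L}^{5}$ of relative degree $9$, and one of $\O_\mathcal{C}(8\sigma_\infty - \sigma_1 - \cdots - \sigma_5) \otimes \phi^*\mathcal{L}^{5}$ of relative degree $3$. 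I then apply \autoref{lem:push} with the section $\sigma_\infty$ to each. The second pushforward genuinely involves the divisors $\sigma_1, \ldots, \sigma_5$ and introduces the auxiliary line bundle
\[ \mathcal{N} = \sigma_\infty^* \O_\mathcal{C}(\sigma_1 + \cdots + \sigma_5), \]
whose first Chern class encodes the loci in $\widehat \M_{0,7}$ where each $\sigma_i$ collides with $\sigma_\infty$. After simplification, $[\V]$ becomes an explicit signed combination of terms of the form $\mathcal{L}^{k}$ and $\mathcal{N}^{-1}\mathcal{L}^{k}$.

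Applying the Whitney sum formula, the Chern classes $c_i(\V)$ come out as polynomials in $\lambda = c_1(\mathcal{L}) = \psi_\infty$ and $\eta = c_1(\mathcal{N})$, and the five integrals $\int v_1^4, \int v_1^2 v_2, \int v_1 v_3, \int v_2^2, \int v_4$ reduce to linear combinations of the top intersection numbers $\int_{\widehat \M_{0,7}} \lambda^a \eta^b$ for $a + b = 4$. The hard part will be evaluating these five numbers on $\widehat \M_{0,7}$. In contrast to \autoref{lem:v2}, where the sections are disjoint and $\mathcal{N}$ is trivial, the Hassett boundary of $\widehat \M_{0,7}$ contains nontrivial coincidence divisors $\{\sigma_i = \sigma_\infty\}$, so $\eta$ is nonzero and its interaction with $\lambda$ must be understood. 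To evaluate these, I would use the description of $\widehat \M_{0,7}$ as a birational modification of the Losev--Manin toric variety $\overline L_5$ (whose intersection theory is explicitly computable from its fan), or alternatively work inductively along the forgetful tower $\widehat \M_{0,7} \to \widehat \M_{0,6} \to \cdots \to \widehat \M_{0,4} \cong \P^1$, tracking $\psi_\infty$ and the coincidence classes contributing to $\eta$ at each step.
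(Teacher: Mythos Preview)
Your approach is correct and follows the same template as the paper's proof: compute $[\mathcal V]$ in the Grothendieck group as $\rho_*\omega_\rho^{-1} - \rho_*(\omega_\rho^{-1}|_{\mathcal Z})$, expand the first term as $\Sym^3\mathcal S \otimes \mathcal L^5$, and reduce the second via \autoref{lem:push} to powers of $\mathcal L$ twisted by a collision line bundle.

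The one substantive difference is the choice of section in \autoref{lem:push}. You use $\sigma_\infty$, which introduces $\mathcal N = \sigma_\infty^*\O(\sigma_1+\cdots+\sigma_5)$, the class of the locus where $\sigma_i$ collides with $\sigma_\infty$. The paper instead uses $\sigma_0$, obtaining the divisor $\Delta_0 = \sigma_0^*\O(\sigma_1+\cdots+\sigma_5)$ (collisions with $\sigma_0$) and the compact formula
\[
  [\mathcal V] = \mathcal L^{-1} - \mathcal L^4 + (\mathcal L^2+\mathcal L^3+\mathcal L^4+\mathcal L^5)\otimes \O(-\Delta_0),
\]
using $\sigma_0^*\O(\sigma_0)=\sigma_\infty^*\O(-\sigma_\infty)=\mathcal L$. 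The two expressions are of course equal in $K$-theory (one checks $\O(\Delta_0)=\mathcal N\otimes \mathcal L^5$), so your route is equally valid. The paper's choice is mildly cleaner because at most one $\sigma_i$ can coincide with $\sigma_0$, so $\Delta_0$ is a disjoint union of five smooth divisors, whereas up to three of the $\sigma_i$ can collide with $\sigma_\infty$.

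For the final evaluation of the degree-$4$ monomials, the paper does not use the Losev--Manin space or the forgetful tower; it simply pulls back along the reduction morphism $\zeta\colon \overline\M_{0,7}\to\widehat\M_{0,7}$, using $\zeta^*\widehat\psi_0=\psi_0-\sum_i[\Delta_{0,i}]$ and $\zeta^*[\Delta_0]=\sum_i[\Delta_{0,i}]$, and computes on $\overline\M_{0,7}$. Your suggested alternatives would also work, but the pullback to $\overline\M_{0,7}$ is the most direct route given the tools already in the paper.
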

\begin{proof}
  We begin by finding the class of $\mathcal V$ in the Grothendieck group of $\widehat \M_{0,7}$.
  The computation parallels the computation in the proof \autoref{lem:v2}, so we will be brief.
  Recall that $\phi \colon \mathcal C \to \widehat \M_{0,7}$ is the universal curve with sections $\sigma_0, \sigma_1, \dots, \sigma_\infty$.
  Setting $\mathcal L = \sigma_\infty^* \omega_\phi$, recall the bundles
  \begin{align}
    \mathcal A = \O \oplus \mathcal L^{-1} \text{ from \eqref{eqn:hatA}, and }
    \mathcal S = \O \oplus \mathcal L^{-2} \oplus \mathcal L^{-3} \text{ from \eqref{eqn:hatS}}.
  \end{align}
  Recall that $\rho \colon \P \mathcal S^\vee \to \widehat \M_{0,7}$ is the structure map and $\mathcal Z \subset \mathcal C$ the subscheme corresponding to the divisor $s_1 + \dots + s_5 + s_\infty$.
  In the Grothendieck group of $\widehat \M_{0,7}$, we have
  \begin{equation}\label{eqn:v31}
    \begin{split}
      \mathcal V &= \rho_* \omega^{-1}_\rho - \phi_* \left( \nu^* \omega^{-1}_\rho |_{\mathcal Z}\right)\\
      &= (\Sym^3 \mathcal S) \otimes \det \mathcal S^\vee - \phi_*\left(\mathcal \O\left(9\sigma_\infty\right) \otimes \phi^*\det \mathcal S^\vee|_{\mathcal Z}\right)\\
      &= (\Sym^3 \left( \O \oplus \mathcal L^{-2} \oplus \mathcal L^{-3} \right)) \otimes \mathcal L^5 - \phi_*\left(\mathcal \O\left(9\sigma_\infty\right) \otimes \phi^*\mathcal L^5\right) \\
      & \qquad +  \phi_*\left(\mathcal \O\left(9\sigma_\infty-{\mathcal Z}\right) \otimes \phi^*\mathcal L^5\right)\\
      &= \mathcal L^{-1} - \mathcal L^4 + (\mathcal L^2 + \mathcal L^3 + \mathcal L^4 + \mathcal L^5) \otimes \O\left(-\Delta_0\right).       
    \end{split}
  \end{equation}
  Here $\Delta_0 \subset \widehat \M_{0,7}$ is the divisor $\sigma_0^* \O(\sigma_1 + \dots + \sigma_5)$.
  This is the locus of marked curves where the marked point $\sigma_0$ coincides with $\sigma_i$ for some $i = 1, \dots, 5$.
  In the last step of the computation, we have used \autoref{lem:push} with the section $\sigma_0$ and that 
  \[ \sigma_0^* \O(\sigma_0) = \sigma_\infty^*\O(-\sigma_\infty) = \mathcal L,\]
  which holds since $\sigma_0$ and $\sigma_\infty$ are two disjoint sections of a $\P^1$-bundle.
  The rest is a straightforward computation using the Whitney sum formula.
\end{proof}
Let us indicate how to carry out the Chern class computation on $\widehat \M_{0,7}$.
The boundary divisors of the Hassett spaces are products of smaller Hassett spaces.
We can use this inductive structure to break down the computation, similar to how it is done on the usual $\overline \M_{0,7}$.
Alternatively, we can use the map $\zeta \colon \overline \M_{0,7} \to \widehat \M_{0,7}$ to pull back the classes to $\overline \M_{0,7}$, and do the computation there.
Let $\widehat \psi_i = c_1(\sigma_i^* \omega_\phi)$ denote the $\psi$-classes on $\widehat \M_{0,7}$ and $\psi_i$ its analogue on $\overline \M_{0,7}$.
Let $\Delta_{i,j} \subset \overline \M_{0,7}$ be the boundary divisor whose general point corresponds to the nodal union of two smooth rational curves with one component only containing the marked points indexed $i$ and $j$.
It is easy to see that
\begin{align*}
  \zeta^* \widehat \psi_0 &= \psi_0 - \sum_{i= 1}^5 [\Delta_{0,i}], \text{ and }\\
  \zeta^* [\Delta_{0}] &= \sum_{i= 1}^5 [\Delta_{0,i}],
\end{align*}
using which we can transport the entire Chern class computation to $\overline \M_{0,7}$.

We now compute the degree of the rational map from $\widehat \M_{0,7}$ to the moduli space $\M$ of cubic surfaces, induced by the family $\mathcal X \to \widehat \M_{0,7}$.
\begin{proposition}\label{prop:deg3}
  The degree of the map $\widehat \M_{0,7} \dashrightarrow \M$ is $20 \times 72 \times 6!$.
\end{proposition}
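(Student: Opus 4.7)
The plan is to follow the strategy of Propositions~\ref{lem:deg1} and~\ref{prop:deg2}. Since $\sigma\colon \M^\dagger \to \M$ has degree $72 \cdot 6!$, it suffices to show that the induced map $\mu\colon\widehat{\M}_{0,7} \dashrightarrow \M^\dagger$ has degree $20$.

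Identifying $\M^\dagger$ with ordered $6$-tuples $(p_1,\dots,p_6) \in (\P^2)^6$ modulo $\PGL_3$, the map $\mu$ sends $(C,s_0,\dots,s_\infty) \in \widehat{\M}_{0,7}$ to $(\nu(s_1),\dots,\nu(s_5),\nu(s_\infty))$, where $\nu\colon C\to\P^2$ is the cuspidal-cubic embedding with $\nu(s_0)$ at the cusp and $\nu(s_\infty)$ at the flex. Given a general ordered $6$-tuple, preimages under $\mu$ correspond bijectively to cuspidal plane cubics $C$ through all six $p_i$ with $p_6$ as the flex---the marked rational curve is then recovered from the normalization. Thus the statement reduces to the enumerative claim: \emph{for a general $(p_1,\dots,p_6)\in(\P^2)^6$, exactly $20$ cuspidal plane cubics pass through all six points with $p_6$ being the flex.}

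For this count I would work in the $\P^3$ of plane cubics through $p_1,\dots,p_6$. The cuspidal cubics trace out a curve $\Gamma\subset\P^3$ of degree $24$---the degree of the cuspidal cycle in the ambient $\P^9$, computed by pushing forward from $\P^9\times\P^2$ the class of the universal cuspidal incidence $\{(F,p):F\text{ is cuspidal at }p\}$ (the cuspidal condition, phrased intrinsically as the vanishing of the determinant of the Hessian quadratic form on $T_p\P^2$ modulo the direction of $p$, has bidegree $(2,0)$). The flex-at-$p_6$ condition $H(F)(p_6)=0$ cuts out a cubic surface $V\subset\P^3$. B\'ezout gives $\Gamma\cdot V = 72$, but the intersection is not transverse: there is excess contribution along the two cuspidal cubics through $p_1,\dots,p_5$ whose cusp lies at $p_6$, since the Hessian of a cuspidal cubic vanishes automatically at its cusp. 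A local computation (Taylor expansion of $F\mapsto H(F)(p_6)$ around a cuspidal cubic with cusp at $p_6$) shows this function vanishes to second order there; subtracting the appropriately-weighted excess from $72$ yields the desired count $20$.

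The main obstacle is carrying out the excess-intersection analysis cleanly, for instance via the Fulton--MacPherson excess formula or a direct local length computation. An alternative approach avoiding the excess would be to directly parametrize the $5$-dimensional space $\mathcal U$ of cuspidal cubics with flex at $p_6$---for instance by degree-$3$ maps $\P^1\to\P^2$ with prescribed cuspidal and flex ramifications---and compute the intersection number of the five incidence divisors $\{F(p_i)=0\}$ on a desingularization of $\mathcal U$.
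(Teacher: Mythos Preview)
Your reduction to the enumerative problem is exactly right and matches the paper: the degree of $\widehat\M_{0,7}\dashrightarrow\M^\dagger$ equals the number of cuspidal plane cubics through five general points with flex at a sixth. The paper, however, does not compute this number directly; it cites Miret--Xamb\'o \cite{mir.des:89} (who credit Schubert) for the value $20$, and explains how to locate it in their tables.

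Your B\'ezout-plus-excess approach does not close as stated. You correctly observe that the function $F\mapsto H(F)(p_6)$ vanishes to second order at a cubic $F_0$ with cusp at $p_6$: the Hessian matrix $\text{Hess}(F_0)(p_6)$ has rank $1$, so the cubic surface $V\subset\P^3$ has an ordinary double point there. But ``second order'' in the ambient $\P^3$ only gives a lower bound of $2$ for the intersection multiplicity $(\Gamma\cdot V)_{F_0}$; it does not give the multiplicity itself, which also depends on how $\Gamma$ sits relative to the tangent cone of $V$. Numerically, if each of the two bad points contributed only $2$, you would get $72-4=68$, not $20$. Since the $20$ honest solutions are (for general $p_i$) transverse intersections, the two cusp-at-$p_6$ points must in fact each carry multiplicity $26$. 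Establishing this requires a substantially more delicate local analysis than a Taylor expansion of $H(F)(p_6)$---one has to track the tangent direction of $\Gamma$ against the full local structure of $V$---and is not a routine computation.

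So the gap is real: the proposal identifies the right enumerative question and a plausible line of attack, but the excess-intersection step is both harder than indicated and numerically inconsistent with the claimed outcome. The paper sidesteps all of this by appealing to the classical literature.
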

\begin{proof}
  The map lifts to $\widehat \M_{0,7} \dashrightarrow \M^\dagger$.
  The degree of this map is the answer to the following enumerative problem: given 6 general points in $\P^2$, how many cuspidal cubics contain the first five and have a flex point at the sixth?
  Fortunately for us, this question has been answered already---the answer is 20.
  The answers appears in a remarkable paper \cite{mir.des:89} of Miret and Xamb\'o, who credit it to Schubert. 
  The paper contains a staggering number of enumerative results about cuspidal cubics.
  Our answer is in the first column in the row labelled $v^2$ in the second table (titled ``Order 2'') in Section~12 of \cite{mir.des:89}.

  We briefly explain how to read off the result from the table in \cite{mir.des:89}.
  The letter $v$ denotes the map from the (7-dimensional) space of cuspidal cubics to $\P^2$,
  the symbol $X_0$ the locus of cuspidal cubics passing through a general point in $\P^2$, and the symbol $X_1$ the locus of cuspidal cubics tangent to a general line in $\P^2$.
  The $i$-th column in the row labelled $v^2$ is the intersection number $(v^* H)^2 \cdot X_0^{6-i}X_1^{i-1}$, where $H \subset \P^2$ is a general line.
  For $i = 1$, this is precisely the number of cuspidal cubics flexed at a given point and passing through 5 general points.

  Since the degree of $\M^\dagger \to \M$ is $72 \times 6!$, the degree of $\widehat \M_{0,7} \dashrightarrow \M$ is $20 \times 72 \times 6!$.
\end{proof}

Using \autoref{prop:goodisgood} and the computation of the Chern classes and the degree, we obtain the following relation on the undetermined coefficients of $[\Orb(X)]$ in
\eqref{eqn:chernexpansion} for a general cubic surface $X$:
\begin{equation}
  \label{eq:relation3}
  3436a_{1^4} + 1076a_{1^{2}2} + 116 a_{13} + 316 a_{2^{2}} = 20 \times 72 \times 6!.
\end{equation}


\subsection{The fourth test family}
\label{sec:family-b_4}
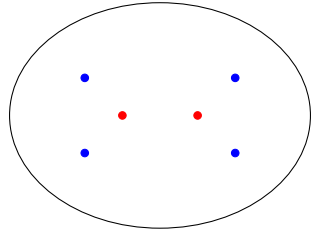
\begin{figure}
    \centering
    \begin{tikzpicture}
  \draw (0,0) ellipse (2 and 1.5);
  \begin{scope}[xshift=-1cm, yshift=-0.5cm]
  \draw[blue,fill]
  (0,1) circle (0.05)
  (2,0) circle (0.05)
  (2,1) circle (0.05)
  (0,0) circle (0.05)
  ;
\end{scope}
  \draw[red,fill]
  (0.5,0) circle (0.05)
  (-0.5,0) circle (0.05)
  ;
\end{tikzpicture}

    \caption{In the fourth test family, we blow up the plane at 4 fixed points (blue) and 2 variable points (red).}
    \label{fig:family4}
  \end{figure}
Our fourth family is perhaps the most obvious one.
We blow up 6 points on $\P^2$, four of which remain fixed, and the remaining two vary freely (see \autoref{fig:family4}).
A complication arises, however, while formalizing this construction.
When the two varying points lie on the line joining two of the fixed points, the resulting configuration is no longer admissible.
It takes some effort to resolve this issue.

Let $S$ be the blow up of $\P^2$ at 4 general points.
Then $S$ is a quintic del Pezzo. 
Let $S^{[2]}$ denote the Hilbert scheme of 2 points on $S$.
We have 10 exceptional curves $L_1, \dots, L_{10}$ on $S$.
Consider the planes $L^{[2]}_i \subset S^{[2]}$.
Observe that a length 2 subscheme $Z \subset S$ is not admissible precisely when $Z$ is contained in $L_i$ for some $i$.
For $i \neq j$, the lines $L_i$ and $L_j$ are either disjoint or intersect in a single (reduced) point.
Therefore, $L^{[2]}_i$ and  $L^{[2]}_j$ are disjoint subsets of $S^{[2]}$.
Let $\Lambda$ be the union of all $L^{[2]}_i$.
The crux of this
section is to show that the blow up $\Bl_{\Lambda}S^{[2]}$ hosts a
good family of cubic surfaces.

Set
\begin{equation}
  \label{eq:S2tilde}
  \widetilde {S^{[2]}} := \Bl_{\Lambda}S^{[2]},
\end{equation}
with $\beta \colon \widetilde {S^{[2]}} \to S^{[2]}$ being the blow-down map.
For $i=1, \dots, 10$, let $E_{i} = \beta^{-1}(L^{[2]}_{i})$ denote the
components of the of the exceptional divisor of the blow-up.
Consider
\[ F_i: = E_i \times L_i \subset \widetilde {S^{[2]}} \times S,\]
and let $F$ be the (disjoint) union $F = \bigcup_i F_i$.
Set
\begin{equation}
  \label{eq:Xtilde}
  \mathcal{Y} := \Bl_{F}\left( \widetilde{S^{[2]}} \times S \right),
\end{equation}
with $\eta \colon \mathcal Y \to  \widetilde{S^{[2]}} \times S$ being the blow-down map.
Consider the map
\[ \widetilde \pi \colon \mathcal Y \to \widetilde {S^{[2]}}\]
obtained by composing $\eta$ with the projection on to the first factor.
It is easy to check that $\widetilde \pi$ is a flat family of surfaces, isomorphic to the constant family with fiber $S$ over the complement of $E$ in $\widetilde{S^{[2]}}$.

Let $\mathcal Z \subset S^{[2]} \times S$ be the universal closed subscheme of length 2.
Let $\widetilde {\mathcal Z} \subset \widetilde{S^{[2]}} \times S$ be the fiber product
\[ \widetilde {\mathcal Z} = \mathcal Z \times_{S^{[2]}} \widetilde {S^{[2]}}.\]
The next lemma is critical to our construction.
\begin{lemma}
  The closed embedding
  $i: \widetilde{\mathcal{Z}} \hookrightarrow \widetilde{S^{[2]}}
  \times S$ lifts to a closed embedding
  $j: \widetilde{\mathcal{Z}} \hookrightarrow  \mathcal{Y}$.
\end{lemma}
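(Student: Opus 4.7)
The strategy is to invoke the universal property of the blow-up $\eta$. To lift $i$ to a morphism $j \colon \widetilde{\mathcal{Z}} \to \mathcal{Y}$ it suffices to show that the pullback ideal sheaf $i^{-1}\mathcal{I}_F \cdot \mathcal{O}_{\widetilde{\mathcal{Z}}}$ is invertible on $\widetilde{\mathcal{Z}}$. Once such a lift exists, $j$ is automatically a closed embedding: factoring $j$ through its graph
\[
\widetilde{\mathcal{Z}} \to \widetilde{\mathcal{Z}} \times_{\widetilde{S^{[2]}} \times S} \mathcal{Y} \to \mathcal{Y}
\]
expresses it as the composite of a closed immersion (the graph, which is closed because $\eta$ is proper and in particular separated) with the base change of the closed immersion $i$. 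Thus all the content is in verifying invertibility.

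Since the $F_i = E_i \times L_i$ are pairwise disjoint one may work one component at a time, and away from the preimage of $E_i$ the pullback $i^{-1}\mathcal{I}_{F_i}$ is the unit ideal, so the question is local near the preimage of $E_i$. The key scheme-theoretic observation is tautological: the universal length-2 subscheme $\mathcal{Z}|_{L_i^{[2]}} \subset S^{[2]} \times S$ is contained scheme-theoretically in $L_i^{[2]} \times L_i$, since $L_i^{[2]}$ is by definition the Hilbert scheme of 2 points on $L_i$. Equivalently, on $\mathcal{Z}$ the pullback of $\mathcal{I}_{S^{[2]} \times L_i}$ is contained in the pullback of $\mathcal{I}_{L_i^{[2]} \times S}$ locally near $\mathcal{Z} \cap (L_i^{[2]} \times S)$. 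Pulling this further back to $\widetilde{\mathcal{Z}}$ and using that $\beta^{-1}(L_i^{[2]}) = E_i$ as schemes---which holds because $L_i^{[2]} \subset S^{[2]}$ is a smooth codimension-2 local complete intersection that is blown up, so its scheme-theoretic preimage coincides with the exceptional divisor---we obtain
\[
i^{-1}\mathcal{I}_{F_i} \cdot \mathcal{O}_{\widetilde{\mathcal{Z}}} = i^{-1}\mathcal{I}_{E_i \times S} \cdot \mathcal{O}_{\widetilde{\mathcal{Z}}}
\]
in a neighborhood of the preimage of $E_i$. The right-hand side is invertible because $E_i \subset \widetilde{S^{[2]}}$ is a Cartier divisor, so the construction is complete.

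The main obstacle is this final scheme-theoretic identification. The set-theoretic containment is immediate, but promoting it to an equality of ideal sheaves requires both the tautological containment of the universal family inside $L_i^{[2]} \times L_i$ as a subscheme and the explicit local description of the blow-up along a smooth complete intersection. This care is necessary precisely because the pair $(\widetilde{\mathcal{Z}}, F_i)$ is not in general position: $\widetilde{\mathcal{Z}}$ meets $F_i$ along a divisor rather than in the expected codimension $2$, so one cannot avoid identifying the intersection scheme-theoretically with the Cartier divisor $\widetilde{\mathcal{Z}} \cap (E_i \times S)$.
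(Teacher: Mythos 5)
Your proof is correct and follows essentially the same route as the paper. The core step in both cases is the observation that the tautological containment of the restricted universal family inside $L_i^{[2]} \times L_i$ forces the scheme-theoretic equality $F_i \cap \widetilde{\mathcal{Z}} = (E_i \times S) \cap \widetilde{\mathcal{Z}}$, whose right-hand side is Cartier, so the universal property of the blow-up applies; the paper then disposes of the closed-embedding claim in one line via $i = \eta \circ j$, which is the standard fact your graph argument spells out. One small simplification you could make: the scheme-theoretic preimage of the blown-up center always coincides with the exceptional divisor by the very definition of the blow-up, so you do not need the smooth lci hypothesis to justify $\beta^{-1}(L_i^{[2]}) = E_i$.
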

\begin{proof}
  The subscheme $(L^{[2]}_i \times S) \cap \mathcal Z$ is contained in $(L^{[2]}_i \times L_i)$.
  Hence, we have an equality of schemes
  \[
    (L^{[2]}_i \times L_i) \cap \mathcal Z = (L^{[2]}_i \times S) \cap \mathcal Z.
  \]
  By pulling back both sides to $\widetilde{S^{[2]}} \times S$, we get
  \begin{equation}
    \label{eq:Cartier}
    F_i \cap \widetilde{\mathcal{Z}} = (E_i \times S) \cap \widetilde{\mathcal{Z}}.
  \end{equation}
  In particular, $F_i \cap \widetilde {\mathcal Z}$ is a Cartier divisor.
  By the universal property of the blow up, we get a lift $j \colon \widetilde{\mathcal Z} \to \mathcal Y$.
  Since $i = \beta \circ j$ is a closed embedding, so is $j$.
\end{proof}

The following diagram summarizes the situation so far:
\begin{center}
\begin{tikzcd} & \mathcal{Y} \arrow[d, "\eta"] & & \\ \td{\mathcal{Z}} \arrow[r,
  hook, "i"] \arrow[ur, hook, "j"] & \td{S^{[2]}} \times S \arrow[r] \arrow[d] & S^{[2]}\times S \arrow[d] \\ & \td{S^{[2]}} \arrow[r, "\beta"] &
  S^{[2]}.
\end{tikzcd}
\end{center}

We let $\mathcal{F} \subset \mathcal{Y}$ denote the exceptional
divisor of $\eta$. It has $10$ disjoint components, corresponding to
each $F_{i}$, which in turn correspond to the ten lines $L_{i}$.
To ease notation, we suppress the map $j \colon \widetilde{\mathcal Z} \to \mathcal Y$, and write $\widetilde{\mathcal Z} \subset \mathcal Y$.

Let us study the family $\widetilde \pi \colon \mathcal Y \to \widetilde{S^{[2]}}$.
Its fiber over a point in the complement of $E$ is just $S$.
The following proposition describes the fiber of $\mathcal Y$ over $e \in E_i$, along with the fiber of $\widetilde{\mathcal Z}$ in it; see \autoref{fig:familyY} for a picture.
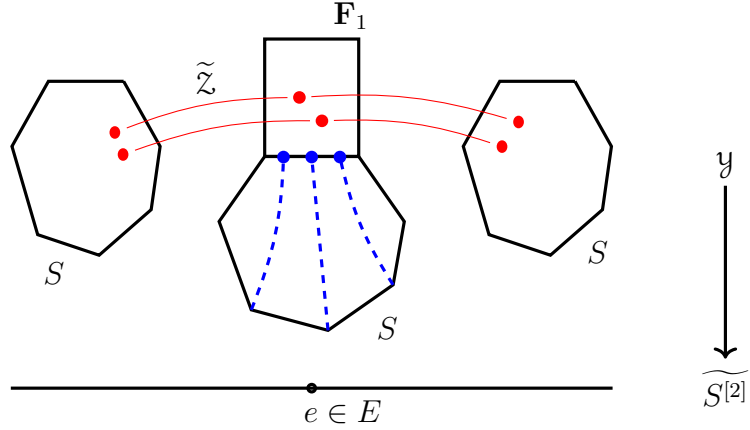
\begin{figure}
  \centering
    \begin{tikzpicture}[very thick]
  \draw (-4,0) -- (4,0);
  \draw (5.5, 3) node (Y) {$\mathcal Y$};
  \draw (5.5,0) node (S){$\widetilde {S^{[2]}}$};
  \draw (Y) edge[->] (S);
  \draw (0,0) circle (0.05) node [below] {$\qquad e \in E$};
  
  \begin{scope}[xshift=-3cm, yshift=3cm, yscale=1.25]
    \draw (60:1) -- (120:1) -- (170:1) -- (230:1) -- (280:1) -- (330:1) -- (10:1) -- (60:1);
    \draw[fill, red]
    (10:0.5) node (a1) {} circle (0.05)
    (40:0.5) node (a2) {} circle (0.05);
    \draw (250:1.25) node {$S$};
  \end{scope}

  \begin{scope}[xshift=3cm, yshift=3cm, yscale=1.25]
    \draw (60:1) -- (120:1) -- (170:1) -- (230:1) -- (280:1) -- (330:1) -- (10:1) -- (60:1);
    \draw[fill, red]
    (160:0.5) node (b1) {} circle (0.05)
    (120:0.5) node (b2) {} circle (0.05);
    \draw (310:1.25) node {$S$};
  \end{scope}

  \begin{scope}[yshift=2cm, scale=1.25]
    \draw (60:1) -- (120:1) -- (170:1) -- (230:1) -- (280:1) -- (330:1) -- (10:1) -- (60:1);
    \draw (60:1) -- (120:1) -- ++(0,1.25) -- ++(1,0) -- cycle;
    \draw[fill, red]
    (85:1.25) node (z1) {} circle (0.05)
    (95:1.50) node (z2) {} circle (0.05);
    \draw[blue,fill] (230:1) edge [dashed, bend right=10] (-0.3,0.86)
    (-0.3,0.86) circle (0.05);
    \draw[blue,fill] (280:1) edge [dashed] (0,0.86)
    (0,0.86) circle (0.05);
    \draw[blue,fill] (330:1) edge [dashed, bend left=10] (0.3,0.86)
    (0.3,0.86) circle (0.05);
    \draw (310:1.25) node {$S$};
    \draw (80:2.4) node {$\F_1$};
  \end{scope}
  \draw[thin, draw=red, bend left=10] (a1) edge (z1) (z1) edge (b1);
  \draw[thin, draw=red, bend left=10] (a2) edge node[above] {$\widetilde{\mathcal Z}$} (z2)
  (z2) edge (b2);
\end{tikzpicture}

    \caption{The family of surfaces $\mathcal Y \to \widetilde {S^{[2]}}$ and the subscheme $\widetilde {\mathcal Z} \subset \mathcal Y$.
      The dashed lines on the central fiber represent the three exceptional curves of $S$ intersecting the double curve.
    }
    \label{fig:familyY}
\end{figure}

\begin{proposition}
  \label{prop:geometry}
  Let $e \in E_{i}$ be any point, and let
  $\mathcal{Y}_{e} = \widetilde \pi^{-1}(e)$.
  \begin{enumerate}
  \item $\mathcal{Y}_{e}$ is the union of two surfaces $S$ and $\mathcal F_e$, where $\mathcal F_e$ is a copy of the Hirzebruch surface $\F_1$.
    They meet transversely along $L_{i} \subset S$ and a smooth curve of self-intersection $1$ in $\F_1$.
  \item The subscheme $\widetilde{\mathcal Z}_e \subset \mathcal Y_e$ lies in $\mathcal F_e$ and is disjoint from $S$.
    It maps isomorphically onto its image in $L_{i}$ under $\eta$.
  \end{enumerate}
\end{proposition}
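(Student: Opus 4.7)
The strategy is to reduce both parts to an explicit local computation of the blowup $\eta$ near a point of $F_i = E_i \times L_i$. Since $E_i \subset \td{S^{[2]}}$ and $L_i \subset S$ are both smooth divisors, I can pick local coordinates $(y_1, \ldots, y_4)$ on $\td{S^{[2]}}$ near $e$ with $E_i = \{y_1 = 0\}$, and $(z_1, z_2)$ on $S$ near a support point $p \in L_i$ of $Z_0$ with $L_i = \{z_1 = 0\}$. Then $F_i$ is locally cut out by $(y_1, z_1)$, so $\mathcal{Y}$ is locally the subvariety $\{a z_1 = b y_1\}$ of $\A^6 \times \P^1$, with $[a:b]$ the fiber coordinate of the exceptional $\P^1$.

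For part (1), restricting to $y_1 = 0$ gives the equation $a z_1 = 0$, which decomposes into $\{a = 0\}$ and $\{z_1 = 0\}$. Globally, $\{a = 0\}$ is the strict transform of $W = \{e\} \times S$; since $\eta$ is an isomorphism off $F_i$, this component is a copy of $S$. The other piece is the fiber $\mathcal{F}_e$ of the exceptional divisor $\mathcal{F}_i = \P(N_{F_i/\td{S^{[2]}}\times S})$ over $\{e\} \times L_i$. The normal bundle decomposes as $N_{F_i}|_{\{e\}\times L_i} \cong \O_{L_i} \oplus N_{L_i/S}$, and since $L_i$ is a $(-1)$-curve on the del Pezzo $S$, we have $N_{L_i/S} \cong \O(-1)$, giving $\mathcal{F}_e \cong \P(\O \oplus \O(-1)) \cong \F_1$. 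The intersection of the two components sits at $[a:b] = [0:1]$, identified with $L_i$ inside the $S$-component and with the sub-projectivization $\P(N_{L_i/S}) \subset \mathcal{F}_e$. Using the formula $N_{\P(\mathcal{L})/\P(\mathcal{V})} = \Hom(\mathcal{L}, \mathcal{V}/\mathcal{L})$ for the normal bundle of a sub-line-bundle in a projectivization, this section has normal bundle $\Hom(\O(-1), \O_{L_i}) \cong \O(1)$, so it is the $(+1)$-section of $\F_1$. Transversality of the meeting is immediate from the local equations $a = 0$ and $z_1 = 0$.

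For part (2), I use that the lemma just preceding the proposition produces a closed embedding $j \colon \td{\mathcal{Z}} \hookrightarrow \mathcal{Y}$; base-changing to the fiber over $e$ keeps $j|_e$ a closed embedding $\td{\mathcal{Z}}_e \hookrightarrow \mathcal{Y}_e$. Since $\td{\mathcal{Z}}_e = Z_0$ is supported on $L_i$ and $\td{\mathcal{Z}}$ meets $F_i$ precisely along $(E_i \times S) \cap \td{\mathcal{Z}}$, the lift lands in the exceptional divisor $\mathcal{F}_i$, hence in $\mathcal{F}_e$. To check disjointness from $S$, I write $\td{\mathcal{Z}}$ near a point of $\mathrm{supp}(Z_0)$ as $z_1 = y_1 \cdot g$ for a regular function $g$ on $\td{\mathcal{Z}}$; the existence of $g$ is precisely the Cartier divisor identity $F_i \cap \td{\mathcal{Z}} = (E_i \times S) \cap \td{\mathcal{Z}}$ already verified. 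In the chart $a \neq 0$, the lift then has $b/a = g$, placing the fiber-over-$e$ point at $[a:b] = [1 : g(e)]$, with $a \neq 0$. This avoids the intersection curve $\{[a:b] = [0:1]\}$, so $\td{\mathcal{Z}}_e$ is disjoint from $S$. Finally, $\eta \circ j|_e$ reproduces the original inclusion $\td{\mathcal{Z}}_e = Z_0 \hookrightarrow L_i$, so $\eta|_{\td{\mathcal{Z}}_e}$ is an isomorphism onto its image.

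I expect the main obstacle to be keeping the projectivization conventions consistent throughout, so that the section of $\mathcal{F}_e$ meeting the $S$-component comes out as the $(+1)$-curve of $\F_1$ rather than the $(-1)$-curve; this is routine but demands care, given the paper's convention $\P(\mathcal{W}) = \Proj \Sym \mathcal{W}^\vee$. A lesser subtlety is the non-reduced case $Z_0 = 2p$, where $\td{\mathcal{Z}}$ no longer locally splits into two disjoint graphs over $\td{S^{[2]}}$; however, both the local relation $z_1 = y_1 g$ and the chart-wise computation of the lift depend only on the Cartier divisor property from the lemma, so the argument applies uniformly.
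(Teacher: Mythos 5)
Your proof is correct, and it departs from the paper in one interesting place. Part~(1) is essentially the paper's argument: both identify the exceptional component over $\{e\}\times L_i$ as $\P\bigl(\O_{L_i}\oplus N_{L_i/S}\bigr) = \P\bigl(\O\oplus\O(-1)\bigr) \cong \F_1$ via the splitting of the normal bundle of $F_i$, and both identify the double curve as the section corresponding to the subbundle $N_{L_i/S}$. You go slightly further than the paper by explicitly computing the self-intersection of that section as $\Hom(\O(-1),\O)\cong\O(1)$, which the paper leaves implicit; this is a helpful sanity check on the projectivization convention. Transversality is handled the same way.

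The genuinely different step is the disjointness in Part~(2). The paper argues globally by intersection numbers: since $\td{\mathcal{Z}}\to\td{S^{[2]}}$ is finite flat of degree $2$, the cycle product $[\td{\mathcal{Z}}]\cdot[\mathcal{Y}_e]$ is $2$; writing $[\mathcal{Y}_e]=[\mathcal{F}_e]+[S]$ and using the already-established containment $\td{\mathcal{Z}}_e\subset\mathcal{F}_e$ (which contributes all of the $2$) forces $[\td{\mathcal{Z}}]\cdot[S]=0$, hence disjointness. You instead argue locally: the scheme-theoretic identity $(y_1,z_1)=(y_1)$ on $\O_{\td{\mathcal{Z}}}$ coming from \eqref{eq:Cartier} gives $z_1 = y_1 g$, and this forces the lift $j$ to land entirely in the affine chart $a\neq 0$, i.e.\ away from the $S$-component $\{a=0\}$. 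Both arguments are valid. Yours is more elementary and local -- it manifestly handles the non-reduced fibers $Z_0 = 2p$ uniformly, and it does not depend on the specific numerical coincidence that the flat degree is exactly $2$, so it would generalize more readily to longer subschemes. The paper's argument is shorter to state once the containment $\td{\mathcal{Z}}_e\subset\mathcal{F}_e$ is in hand, but it relies on that global degree count. The final assertion of Part~(2) is handled identically in both proofs.
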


\begin{proof}
  We study the blowup $\mathcal{Y} = \Bl_{F} \left( \td{S^{[2]}} \times S
  \right)$, beginning with the normal bundle $N_{F/ \td{S^{[2]}} \times S }$.
  We focus on the component $F_{i} = E_{i} \times L_{i}$.
  Denoting by ${\pi}_1 \colon F_i \to E_i$ and ${\pi}_2 \colon F_i \to L_i$ the two projections, we have
  \[N_{F_{i}/ \td{S^{[2]}} \times S } = {\pi}_{1}^{*}
    N_{E_{i}/\td{S^{[2]}}} \oplus {\pi}_{2}^{*}N_{L_{i}/S}.\]
  The component of $\mathcal{F}$ lying above $F_{i}$ is the
  projectivization $\P N_{F_{i}/ \td{S^{[2]}} \times S }$.
  Restricting the projectivization to $\{e\} \times L_{i} \simeq L_i$ yields
  $\P (\O_{L_{i}} \oplus \O_{L_{i}}(-1))$.
  Thus we get  $\mathcal{F}_{e} \simeq \F_{1}$.
  Since $(\{e\} \times S) \cap F_{i} = \{e\} \times L_{i}$ is a Cartier
  divisor on $\{e\} \times S$, the proper transform of $\{e\} \times S$ in
  $\mathcal{Y}$ is again a copy of $S$.
  It meets
  $\mathcal{F}_{e} = \P (\O_{L_{i}} \oplus \O_{L_{i}}(-1)) \simeq
  \F_{1}$ along the section corresponding to the summand
  $N_{L_{i}/S} = \O_{L_{i}}(-1)$.
  Altogether, we get the description of $\mathcal{Y}_{e}$ provided in (1).

  Since the embedding $i \colon \widetilde {\mathcal Z} \to \widetilde{S^{[2]}} \times S$ is the composite
  \[\widetilde {\mathcal Z} \xrightarrow{j} \mathcal Y \xrightarrow{\eta} \widetilde{S^{[2]}} \times S,\]
  and $\mathcal F = \eta^{-1}(F)$, we have
  \[ \widetilde {\mathcal Z} \cap j^{-1}\left(\mathcal F\right) = \widetilde {\mathcal Z} \cap i^{-1}(F).\]
  We have already seen in \eqref{eq:Cartier} that
  \[F \cap \td{\mathcal{Z}} = (E \times S)\cap \td{\mathcal{Z}}.\]
  Restricting the first coordinate of $E \times S$ to $e$ implies
  $\td{\mathcal{Z}_{e}} \subset \mathcal{F}_{e}$.

  To see that $\widetilde Z_e$ is disjoint from $S$, consider
the intersection product
$[\td{\mathcal{Z}}] \cdot [\mathcal{Y}_{e}]$.  On the one hand, since
$\td{\mathcal{Z}} \to \td{S^{[2]}} \times S$ is finite,
flat, and has degree $2$, this intersection product is $2$.  On the
other hand, by part (1), it is equal to
$[\td{\mathcal{Z}}] \cdot ([\mathcal{F}_{e}]+ [S])$.
Since we have
$\td{\mathcal{Z}}_{e} \subset \mathcal{F}_{e}$, we get
$[\td{\mathcal{Z}}] \cdot [S] = 0$, and hence $\td{\mathcal Z}$ must be disjoint from $S$.

The last assertion follows immediately from the fact that $\widetilde Z \subset \mathcal Y$ maps isomorphically to its image in $\widetilde{S^{[2]}} \times S$.
\end{proof}

The following proposition provides one more necessary detail about the position of $\widetilde {\mathcal Z}_e$ in the $\F_1$ component of $\mathcal Y_e$.
\begin{proposition}
  \label{prop:nodirectrix}
  Maintain the notation of \autoref{prop:geometry}.
  \begin{enumerate}
  \item The length two
    subscheme $\td{\mathcal{Z}}_{e} \subset \F_{1}$ is not contained in the directrix of $\F_{1}$.
  \item Set $\tau_i = T_i \cap \F_1$.
    The subscheme $\td{\mathcal Z}_e \cup \tau_i$ is an admissible subscheme of $\F_1$.
  \end{enumerate}
\end{proposition}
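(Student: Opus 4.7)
The plan is to unwind the identifications from \autoref{prop:geometry} and pin down the position of $\widetilde{\mathcal Z}_e$ in $\mathcal F_e \simeq \P(\O_{L_i} \oplus N_{L_i/S})$. The directrix is the section of the first summand $\O_{L_i} = N_{E_i/\widetilde{S^{[2]}}}|_{\{e\}\times L_i}$ (a $(-1)$-curve). A point $e \in E_i$ above $z = \{p,q\} \in L_i^{[2]}$ is a nonzero line in $N_{L_i^{[2]}/S^{[2]}}|_z = N_{L_i/S}|_p \oplus N_{L_i/S}|_q$, spanned by some $(v_p, v_q)$. The lift $j\colon \widetilde{\mathcal Z} \hookrightarrow \mathcal Y$ at $(e,p)$ is governed by the conormal direction of $D_i = F_i \cap \widetilde{\mathcal Z}$ inside $\widetilde{\mathcal Z}$; since $\widetilde{\mathcal Z} \to \widetilde{S^{[2]}}$ is \'etale at $(e,p)$, this direction pushes into $N_{F_i}|_{(e,p)} = N_{E_i}|_e \oplus N_{L_i/S}|_p$ as $(1, v_p)$ in suitable coordinates. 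Hence the point of $\widetilde{\mathcal Z}_e$ over $p$ equals $[1:v_p] \in \F_1$, which lies on the directrix iff $v_p = 0$, and similarly over $q$. Since $(v_p, v_q) \neq 0$, at least one of these points is off the directrix. The non-reduced case $p = q$ is handled analogously, using the tangent direction of $z$ in $L_i$.

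\textbf{Part (2), setup.} I interpret $T_i$ as the union of the three other $(-1)$-curves on the quintic del Pezzo $S$ that meet $L_i$ (from the Petersen graph on the ten lines of $S$); each meets $L_i = S \cap \mathcal F_e$ transversally at a single point, so $\tau_i$ consists of three reduced points on the $(+1)$-section $C := S \cap \mathcal F_e$ of $\F_1$. I would verify the three conditions of \autoref{def:admissible} on $\F_1$, a smooth del Pezzo of degree $8$, for which an admissible subscheme has length $5 = 8 - 3$. Curvilinearity is immediate: $\widetilde{\mathcal Z}_e$ is disjoint from $S$ by \autoref{prop:geometry}, hence from $\tau_i \subset C$, so $Z := \widetilde{\mathcal Z}_e \cup \tau_i$ is a disjoint union of a length-two subscheme in a smooth surface and three reduced points.

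\textbf{Cohomology and main obstacle.} For the remaining two conditions, I would argue as follows. Since $\omega_{\F_1}^{-1}|_C \cong \O_{\P^1}(3)$ and the restriction $H^0(\omega_{\F_1}^{-1}) \to H^0(\omega_{\F_1}^{-1}|_C)$ is surjective (dimensions $9$ and $4$), the three points of $\tau_i$ on $C$ impose independent conditions, giving $h^0(\mathcal I_{\tau_i} \otimes \omega_{\F_1}^{-1}) = 6$. Then
\[
0 \to \mathcal I_Z \otimes \omega_{\F_1}^{-1} \to \mathcal I_{\tau_i} \otimes \omega_{\F_1}^{-1} \to \omega_{\F_1}^{-1}|_{\widetilde{\mathcal Z}_e} \to 0
\]
reduces conditions (b) and (c) of \autoref{def:admissible} to showing that the $5$-dimensional subsystem $|\omega_{\F_1}^{-1}(-C)| = |e + 2f|$ surjects onto the $2$-dimensional space $H^0(\omega_{\F_1}^{-1}|_{\widetilde{\mathcal Z}_e})$ and has no base points in $\F_1 \setminus (Z \cup C)$. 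Under the contraction $\F_1 \to \P^2$ of the directrix $e$, this subsystem pulls back from the complete linear system of plane conics through the image $p_0$ of $e$, and $C$ maps to a line not containing $p_0$. The main obstacle, and the precise point at which (1) is essential, is the case that $\widetilde{\mathcal Z}_e$ meets the directrix (exactly one of $v_p, v_q$ vanishes): then the length-two scheme is concentrated at $p_0 \in \P^2$ but, by (1), not tangent to the exceptional direction, and a direct case analysis of plane conics through $p_0$ produces both the required separation and the needed base-point-freeness.
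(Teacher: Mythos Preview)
Your Part~(1) matches the paper's argument: both compute the lift $j$ via the differential of the blow-down $\beta$, identify the point of $\widetilde{\mathcal Z}_e$ over $p$ as $[1:v_p]$ in the fiber of $\F_1$, and conclude from $(v_p,v_q)\neq 0$ that $\widetilde{\mathcal Z}_e\not\subset e$.

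For Part~(2) the paper gives no details beyond ``straightforward,'' so any correct check is fine; your outline, however, has two slips. First, in the reduced case with exactly one of $v_p,v_q$ vanishing, the image of $\widetilde{\mathcal Z}_e$ under $\F_1\to\P^2$ is \emph{not} concentrated at $p_0$: the point with $v_q\neq 0$ lies off the directrix and maps to a point distinct from $p_0$. Second, you have misplaced where Part~(1) is actually needed. The system $|e+2f|$ is very ample on $\F_1$ (it embeds $\F_1$ as the cubic scroll $S(1,2)\subset\P^4$), so the surjection onto $H^0(\omega_{\F_1}^{-1}|_{\widetilde{\mathcal Z}_e})$, and hence condition~(b), holds for \emph{every} length-two subscheme with no case analysis. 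The genuine obstruction is to condition~(c): if $\widetilde{\mathcal Z}_e$ were contained in the directrix $e$, then since $\omega_{\F_1}^{-1}|_e\cong\O_{\P^1}(1)$ has only a $2$-dimensional space of sections, every section of $\omega_{\F_1}^{-1}$ vanishing on the length-two scheme $\widetilde{\mathcal Z}_e\subset e$ would vanish on all of $e$, forcing $e$ into the base locus of $|\mathcal I_Z\otimes\omega_{\F_1}^{-1}|$. This is exactly what Part~(1) excludes. Once $\widetilde{\mathcal Z}_e\not\subset e$ (and $\widetilde{\mathcal Z}_e$ is not in a ruling, which follows from its isomorphic projection to $L_i$), its secant line in the scroll is not contained in $\F_1$, and the remaining verification of global generation is routine.
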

\begin{proof}
  The proof follows easily from unravelling the map
  \[ j \colon \widetilde {\mathcal Z}_e \to \F_1,\]
  which we now do.
  Set $Z = \widetilde{\mathcal Z}_e$ and let $N_e$ be the 1-dimensional vector space
  \[ N_e = N_{E_i/\widetilde{S^{[2]}}} \big|_e. \]
  From the proof of \autoref{prop:geometry}, we write the $\F_1$ component more canonically as
  \[
    \F_1 = \P\left(N_e \otimes \O_{L_i} \oplus N_{L_i/S}\right).
  \]
  The point $e \in E_i$ gives a (non-zero) map
  \[
    d \beta \colon N_e \to N_{L^{[2]}_i / S^{[2]}}|_{\beta(e)}.
  \]
  The point $\beta(e) \in L^{[2]}_i \subset S^{[2]}$ corresponds to the length 2 subscheme $Z \subset L_i \subset S$.
  It is easy to see that we have a canonical identification
  \[
    N_{L^{[2]}_i / S^{[2]}}\big |_{\beta(e)} = H^0\left(Z, N_{L_i/S}|_Z\right).
  \]
  We thus get a map
  \[
    d \beta \colon N_e \to H^0\left(Z, N_{L_i/S}|_Z\right).
  \]
  We leave it to the reader to verify that the map $j \colon Z \to \F_1$ is induced by the map
  \begin{equation}\label{eqn:emap}
    N_e \otimes \O_Z \to N_e \otimes \O_Z \oplus N_{L_i/S}|_Z
  \end{equation}
  given by $(\id, \epsilon \circ d\beta)$, where
  \[
    \epsilon \colon H^0\left(Z, N_{L_i/S}|_Z\right) \otimes \O_Z \to N_{L_i/S}|_Z
  \]
  is the evaluation map.
  Since $\epsilon$ is an isomorphism and $d \beta$ is non-zero, the second coordinate of \eqref{eqn:emap} is non-zero.
  This non-vanishing equivalent to the proposition.
  (Incidentally, note that the first coordinate of \eqref{eqn:emap} is invertible.
  This is equivalent to the fact that $Z \subset \F_1$ is disjoint from the curve where $\F_1$ meets the other component $S$.
  In the proof of \autoref{prop:geometry}, we concluded this fact using intersection numbers.)

  The assertion about admissibility is straightforward.
\end{proof}

We are now in a position to describe the family of cubic surfaces.
Consider the family
\[ \widetilde \pi \colon \mathcal Y \to \widetilde{S^{[2]}},\]
along with $\widetilde {\mathcal Z} \subset \mathcal Y$.
Set $U = \widetilde{S^{[2]}} \setminus E$.
Over $U$, the $\widetilde{\mathcal Z}$ is an admissible subscheme of $\mathcal Y$, and here our family will be the associated cubic surface as in \autoref{def:cubicZ}.
We now describe our cubic surface over a point $e \in E_i$.
In the $S$ component of $\mathcal Y_e$, we have three exceptional curves that meet the double curve $L_i \subset S$.
They intersect $L_i$ in three distinct points.
Let $T_i$ be the union of these three exceptional curves (in \autoref{fig:familyY}, these are indicated by the dashed lines).
On the $\F_1$ component, we have a distinguished length 5 subscheme: the union of the length two subscheme $\widetilde{\mathcal Z}_e$ and the 3 points $\tau_i := T_i \cap L_i$.
This subscheme of length 5 is admissible, and our cubic surface over $e$ will be its associated cubic surface.

We construct the family that we described fiber-wise above using the linear series associated to the line bundle
\begin{equation}\label{eqn:l1}
  \mathcal L = \omega_{\widetilde \pi}^{-1} \otimes \O_{\mathcal Y}(-\mathcal F).
\end{equation}
Since $\mathcal Y$ is the blow up of $\widetilde{S^{[2]}} \times S$ with exceptional divisor $\mathcal F$, we can also write this line bundle as
\begin{equation}\label{eqn:l2}
  \mathcal L = \omega_{S}^{-1} \otimes \O_{\mathcal Y}(-2\mathcal F).
\end{equation}
The following proposition describes $\mathcal L$ on the degenerate fibers of $\widetilde \pi$.
\begin{proposition}
  \label{prop:LYe}
  Let $e \in E_{i}$ and $\mathcal{Y}_{e} = S \cup_{L_{i}} \F_{1}$ as in \autoref{prop:geometry}.
  Let $T_i \subset S$ be the union of the three exceptional curves meeting $L_i$ and set $\tau_i = L_i \cap T_i$.
  Then
  \begin{enumerate}
  \item $\mathcal{L}|_{S} \simeq \O_{S}(T_{i})$. 
  \item $\mathcal{L}|_{\F_{1}} \simeq \omega_{\F_1}^{-1}$.
  \item The restriction map
  \begin{align}
    \label{eq:restrL}
    H^{0}(\mathcal{Y}_{e}, \mathcal{L}|_{\mathcal{Y}_{e}}) \to H^{0}(\F_{1}, \omega_{\F_1}^{-1})
  \end{align}
  is an isomorphism onto the $6$ dimensional vector space
  $H^{0}\left(\F_{1}, \omega_{\F_1}^{-1}\otimes \mathcal{I}_{\tau_{i}}\right).$
\item The base locus of the complete linear system $|\mathcal L|$ on $\mathcal Y_e$ is $T_i$.
  \end{enumerate}
\end{proposition}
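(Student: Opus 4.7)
The plan is to dispatch the four parts in sequence, using the two descriptions \eqref{eqn:l1} and \eqref{eqn:l2} of $\mathcal L$ alongside the geometry of $\mathcal Y_e = S \cup_{L_i} \F_1$ from \autoref{prop:geometry}.

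For (1), I would restrict \eqref{eqn:l2} to the $S$-component of $\mathcal Y_e$ (the proper transform of $\{e\}\times S$ under $\eta$). The second-factor $\omega_S^{-1}$ pulls back to $\omega_S^{-1}$ on this component. For $\O(\mathcal F)|_S$, observe that $L_i^{[2]}\cap L_j^{[2]} = \emptyset$ for $i\neq j$ (any common point would have to be a length two scheme supported on the single transverse intersection point, which is impossible since the tangent line would have to lie in both $L_i$ and $L_j$), so the exceptional divisors $E_j \subset \widetilde{S^{[2]}}$ are pairwise disjoint and only $\mathcal F_i$ meets $S$, transversally along $L_i$. Hence $\O(\mathcal F)|_S = \O_S(L_i)$ and $\mathcal L|_S = \omega_S^{-1}(-2L_i)$. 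Writing $S = \Bl_{p_1,\dots,p_4}\P^2$ with $h$ the hyperplane class and $e_j$ the exceptional classes, one verifies that for $L_i = e_1$ (say) both $\omega_S^{-1}(-2L_i)$ and $\O_S(T_i) = \O_S(\ell_{12}+\ell_{13}+\ell_{14})$ equal $3h - 3e_1 - e_2 - e_3 - e_4$ in $\Pic(S)$; the other cases follow by symmetry.

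For (2), I would combine adjunction on the nodal union, which gives $\omega_{\widetilde\pi}^{-1}|_{\F_1} = \omega_{\F_1}^{-1}(-L_i)$, with \eqref{eqn:l1}. Rather than compute $\O(\mathcal F)|_{\F_1}$ directly through the projective bundle structure on $\mathcal F_i$, I would verify $\mathcal L|_{\F_1} = \omega_{\F_1}^{-1}$ by comparing intersection numbers on the generators of $\Pic(\F_1)$. On the double curve $L_i$ (a $(+1)$-section), both sides give $3$: for $\mathcal L$ by part (1), for $\omega_{\F_1}^{-1}$ by the standard adjunction computation. On a ruling fiber $f$, I would use \eqref{eqn:l2}: $\omega_S^{-1}|_f$ is trivial since $f$ projects to a point of $S$, while $\O(-2\mathcal F)|_f = \O(2)$ from the identification $N_{\mathcal F_i/\mathcal Y}|_f \cong \O_{\P^1}(-1)$. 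So $\mathcal L \cdot f = 2 = \omega_{\F_1}^{-1}\cdot f$.

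For (3), I would apply the Mayer--Vietoris sequence
\begin{equation*}
0 \to \mathcal L|_{\mathcal Y_e} \to \mathcal L|_S \oplus \mathcal L|_{\F_1} \to \mathcal L|_{L_i} \to 0
\end{equation*}
and take global sections. A Riemann--Roch computation gives $h^0(S, \O_S(T_i)) = 1$, with the unique section cutting out $T_i$, so its restriction to $L_i$ spans $H^0(\O_{L_i}(3) \otimes \mathcal I_{\tau_i})$. A pair $(s_S, s_{\F})$ glues to a section of $\mathcal L|_{\mathcal Y_e}$ iff $s_S|_{L_i} = s_{\F}|_{L_i}$; since the map $s_S \mapsto s_S|_{L_i}$ is injective (the unique section does not vanish along $L_i$), the second projection identifies $H^0(\mathcal L|_{\mathcal Y_e})$ with $H^0(\omega_{\F_1}^{-1} \otimes \mathcal I_{\tau_i})$. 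For (4), the base locus on $S$ is exactly $T_i$ by the above description of $H^0(\O_S(T_i))$. On $\F_1$, the base locus equals that of $|\omega_{\F_1}^{-1} \otimes \mathcal I_{\tau_i}|$. By \autoref{prop:nodirectrix}(2), the larger system $|\omega_{\F_1}^{-1} \otimes \mathcal I_{\widetilde{\mathcal Z}_e \cup \tau_i}|$ is base-point-free, so the five conditions imposed by $\widetilde{\mathcal Z}_e \cup \tau_i$ are independent. In particular, any sub-configuration $\tau_i \cup \{p\}$ with $p \in \widetilde{\mathcal Z}_e \setminus \tau_i$ imposes independent conditions, yielding a section of $|\omega_{\F_1}^{-1}\otimes\mathcal I_{\tau_i}|$ not vanishing at $p$; for $p \notin \widetilde{\mathcal Z}_e \cup \tau_i$, base-point-freeness of the larger system does the job directly. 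Hence the base locus on $\F_1$ is exactly $\tau_i$, and the total base locus on $\mathcal Y_e$ is $T_i \cup \tau_i = T_i$.

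The most delicate step, I expect, is part~(1): tracking the restriction of $\O(\mathcal F)$ to the proper transform through $\eta$ (in particular, showing no other component $\mathcal F_j$ contributes), and then identifying $\omega_S^{-1}(-2L_i)$ cleanly with $\O_S(T_i)$ in $\Pic(S)$. Once (1) and (2) are settled, the Mayer--Vietoris chase in (3) and the base-locus analysis in (4) are essentially bookkeeping plus invocations of admissibility.
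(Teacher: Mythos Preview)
Your proposal is correct and follows essentially the same route as the paper: use \eqref{eqn:l2} together with the identity $\omega_S^{-1} = \O_S(T_i + 2L_i)$ for (1), compute the restriction to $\F_1$ for (2), and for (3)--(4) exploit that $h^0(S,\O_S(T_i)) = 1$ with the unique section cutting out $T_i$; your Mayer--Vietoris chase and base-locus analysis spell out what the paper declares ``straightforward.''

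The one tactical difference is in (2). Instead of checking intersection numbers against a basis of $\Pic(\F_1)$, the paper observes that if $\mathcal S \subset \mathcal Y$ is the proper transform of $E \times S$, then $\mathcal F + \mathcal S$ is pulled back from the base $\widetilde{S^{[2]}}$ and hence trivial on each fiber, giving $\O(\mathcal F)|_{\F_1} = \O(-\mathcal S)|_{\F_1} = \O_{\F_1}(-L_i)$ directly. Combined with $\eta^*\omega_S^{-1}|_{\F_1}$ being a fiber class (since $\omega_S^{-1}\cdot L_i = 1$), \eqref{eqn:l2} then gives $\mathcal L|_{\F_1} = f + 2L_i = \omega_{\F_1}^{-1}$ in one shot. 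This is a bit slicker than your verification, but both are valid. Incidentally, your closing expectation that (1) is the delicate step is inverted: in the paper, (1) is a single line, while (2) is where the one nontrivial trick lives.
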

\begin{proof}
  We have $\omega_S^{-1} = \O(T_i + 2L_i)$.
  This and \eqref{eqn:l2} gives $\mathcal L|_S = \O_S(T_i)$.

  Let $\mathcal S \subset \mathcal Y$ be the proper transform of $E \times S \subset \widetilde{S^{[2]}} \times S$, so that the preimage in $\mathcal Y$ of $E \times S$ is the sum $\mathcal F + \mathcal S$  .
  Then we have
  \[\O(\mathcal F)|_{\F_1} = \O(-\mathcal S)|_{\F_1}.\]
  Since $\omega_S^{-1} \cdot L_i = 1$, the pull-back of $\omega_S^{-1}$ to $\F_1$ is the class of a fiber.
  Since $\omega^{-1}_{\F_1}$ is the sum of a fiber class and 2 times the class of a section of self-intersection 1, we get $\mathcal L|_{\F_1} = \omega_{\F_1}^{-1}$.

  The statements about the dimension and the base locus of $|\mathcal L|$ on $\mathcal Y_e$ are straightforward.
  The only key point is that $\O_S(T_i)$ is one-dimensional, and the vanishing locus of its non-zero sections is $T_i$.
\end{proof}

Let
\[
\mathcal W := \widetilde \pi_* \left( \mathcal L \otimes \mathcal I_{\widetilde{\mathcal Z}} \right).
\]
It follows from Grauert's theorem that $\mathcal W$ is a vector bundle of rank 4.
Consider the rational map
\[ \kappa \colon \mathcal Y \dashrightarrow \P \mathcal W^\vee\]
induced by the evaluation
\[ \widetilde \pi^* \mathcal W \to \mathcal L.\]
It is easy to check that for every $e \in \widetilde{S^{[2]}}$, the image of $\kappa(\mathcal Y_e)$ is contained in a unique cubic surface in $\P \mathcal W^\vee_e$.
Let $\mathcal X \subset \P \mathcal W^\vee$ be the resulting family of cubic surfaces.
More precisely, consider the map
\[ \Sym^3 \mathcal W \to \pi_*(\mathcal L^3)\]
and verify (by looking at the fibers) that both sides are vector bundles and the map is surjective with kernel of rank 1.
The rank 1 kernel defines the cubic $\mathcal X \subset \P \mathcal W^\vee$.
Let $\pi \colon \mathcal X \to \widetilde{S^{[2]}}$ be the obvious map.

The next proposition identifies the anti-canonical bundle of $\pi$.
\begin{proposition}
  We have $\omega_\pi^{-1} = \O(1) \otimes \pi^*\O(E)$.
\end{proposition}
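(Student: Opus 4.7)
My plan is to apply adjunction in the embedding $\mathcal X \subset \P \mathcal W^\vee$ and then isolate a line bundle pulled back from the base. By construction, $\mathcal X$ is cut out by the distinguished rank-$1$ subbundle $\mathcal K \subset \Sym^3 \mathcal W = \pi_*\mathcal O(3)$ appearing in the definition. The inclusion $\mathcal K \hookrightarrow \pi_*\mathcal O(3)$ corresponds by $\pi$-adjunction to a section of $\mathcal O(3) \otimes \pi^*\mathcal K^{-1}$ on $\P\mathcal W^\vee$ whose zero locus is $\mathcal X$. Hence $\mathcal O_{\P\mathcal W^\vee}(\mathcal X) = \mathcal O(3) \otimes \pi^*\mathcal K^{-1}$. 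Combined with the standard formula $\omega_{\P\mathcal W^\vee/\widetilde{S^{[2]}}} = \mathcal O(-4) \otimes \pi^*\det\mathcal W^\vee$ (used earlier in the paper), relative adjunction yields
\[
\omega_\pi^{-1} = \mathcal O(1)|_{\mathcal X} \otimes \pi^*\bigl(\det\mathcal W \otimes \mathcal K\bigr).
\]
So the proposition reduces to showing that $M := \det\mathcal W \otimes \mathcal K$ is isomorphic to $\mathcal O(E)$ on $\widetilde{S^{[2]}}$.

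Next I would localize the problem. Over $U := \widetilde{S^{[2]}} \setminus E$, the family $\widetilde\pi \colon \mathcal Y \to \widetilde{S^{[2]}}$ restricts to the product $U \times S \to U$, and $\mathcal L$ restricts to the pullback of $\omega_S^{-1}$. In this setting, $\mathcal X|_U$ coincides with the family of cubic surfaces associated to the admissible subscheme $\widetilde{\mathcal Z}|_U \subset U \times S$ as in \autoref{def:cubicZ}, and by \autoref{prop:admissibleblowup} the embedding $\mathcal X|_U \hookrightarrow \P\mathcal W^\vee|_U$ is nothing other than the relative anti-canonical embedding. This identifies $\pi_*\mathcal O(1)|_U$ with $\pi_*\omega_\pi^{-1}|_U$ canonically, which forces $M|_U \cong \mathcal O_U$. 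Consequently $M = \mathcal O\bigl(\sum_i n_i E_i\bigr)$ for integers $n_i$. Since the construction of $\mathcal Y$, $\widetilde{\mathcal Z}$, and $\mathcal L$ is functorial in $S$, the whole diagram is equivariant for the natural $\Aut(S)$-action, and $\Aut(S) \cong S_5$ acts transitively on the ten lines $L_i$ and hence on the ten divisors $E_i$. Thus all $n_i$ equal a common integer $n$.

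Finally, to determine $n$, I would restrict to a test curve $B \subset \widetilde{S^{[2]}}$ meeting $E_1$ transversally at a single point and disjoint from $E_2, \dots, E_{10}$. Pushing forward the conjectural identity $\omega_\pi^{-1} = \mathcal O(1) \otimes \pi^*M$ along $B$ gives $\mathcal V|_B = \mathcal W|_B \otimes M|_B$, so that $\deg\det\mathcal V|_B - \deg\det\mathcal W|_B = 4n$. Both sides can be computed explicitly: $\det\mathcal W|_B$ from the blow-up description of $\mathcal Y_B$ as in the proof of \autoref{prop:geometry}, and $\det\mathcal V|_B$ from \autoref{prop:admissibleblowup} applied to the anti-canonical family of blow-ups. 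These two computations force $n = 1$.

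The main obstacle is this last step: the localization and symmetry arguments cleanly isolate the twist to a single integer, but pinning down $n = 1$ requires a concrete intersection-theoretic computation across the boundary divisor. Equivalently, one can avoid the test-curve by taking determinants of $0 \to \mathcal K \to \Sym^3\mathcal W \to \widetilde\pi_*(\mathcal L^3 \otimes \mathcal I_{\widetilde{\mathcal Z}}^3) \to 0$ and using Grothendieck--Riemann--Roch for $\widetilde\pi$ on the blow-up $\mathcal Y$; either route is computationally concrete but notationally intricate rather than conceptually deep.
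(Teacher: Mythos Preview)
Your adjunction set-up is correct and the reduction to $M = \det\mathcal W \otimes \mathcal K \cong \mathcal O(nE)$ for a single integer $n$ is valid, but the route is genuinely different from the paper's and leaves the decisive step undone. The paper never touches $\mathcal K$ or adjunction in $\P\mathcal W^\vee$. Instead it blows up $\mathcal Y$ along $\widetilde{\mathcal Z}$ to obtain $\widehat{\mathcal Y}$, and observes that the induced map $\widehat\kappa \colon \widehat{\mathcal Y}\setminus\mathcal S \dashrightarrow \mathcal X$ is an isomorphism away from codimension at least~2 on both sides. Pulling back $\mathcal O(1)$ gives $\widehat\kappa^*\mathcal O(1) = \mathcal L(-\widehat D) = \omega_{\widehat\pi}^{-1}\otimes\mathcal O(-\mathcal F)$; since $\widetilde\pi^*\mathcal O(E) = \mathcal O(\mathcal F+\mathcal S)$, on the complement of $\mathcal S$ one has $\widehat\pi^*\mathcal O(E) = \mathcal O(\mathcal F)$, so $\widehat\kappa^*(\mathcal O(1)\otimes\pi^*\mathcal O(E))$ and $\omega_{\widehat\pi}^{-1}$ agree there. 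Normality of $\mathcal X$ then transports the identification across. The twist by $\mathcal O(E)$ thus falls out directly from the decomposition of $\widetilde\pi^{-1}(E)$ into $\mathcal F$ and $\mathcal S$, with no undetermined coefficient to compute.

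Your argument, by contrast, packages all the boundary geometry into the single integer $n$ and then proposes to recover it by a test-curve or GRR computation that you do not carry out. That computation is feasible---and in fact \autoref{prop:v4groth} later computes $[\mathcal V] = [\mathcal W\otimes\mathcal O(E)]$ in the Grothendieck group, from which $n=1$ could be read off a posteriori---but as written the proof is incomplete at exactly the point that distinguishes the statement from its trivial restriction to $U$. The paper's geometric comparison is both shorter and self-contained; your adjunction framework is conceptually clean but trades a direct identification for a deferred calculation.
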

\begin{proof}
  Let $\widehat {\mathcal Y}$ be the blow-up of $\mathcal Y$ along $\widetilde{\mathcal Z}$ and $\widehat \pi \colon \widehat{\mathcal Y} \to \widetilde{S^{[2]}}$ the resulting map.
  Then it is easy to check that the rational map $\widehat\kappa \colon \widehat {\mathcal Y} \dashrightarrow \mathcal X$ (induced by $\kappa$) extends to a regular map away from the union of the curves $T_i$ over the points of $E_i$.
  Recall that $\mathcal S \subset \mathcal Y$ is the proper transform of $E \times S$, so that $\widetilde \pi^* \O(E) = \mathcal F + \mathcal S$.
  The map
  \[
    \widehat \kappa \colon \widehat {\mathcal Y} \setminus \mathcal S \dashrightarrow \mathcal X
  \]
  is an isomorphism away from a locus of codimension at least 2 on both sides.
  If $\widehat D \subset \widehat {\mathcal Y}$ is the exceptional divisor of the blow-up, then we have
  \begin{align*}
    \widehat \kappa ^* \O(1) &= \mathcal L(-\widehat D) \\
                             &= \omega^{-1}_{\widetilde \pi} \otimes \O(-\mathcal F) \otimes \O_{\widehat Y}(-\widehat D) \\
                             &= \omega_{\widehat \pi}^{-1} \otimes \O(-\mathcal F).
  \end{align*}
  We see that the line bundles $\widehat \kappa^* \O(1) \otimes \widehat \pi^* \O(E)$ and $\omega_{\widetilde \pi}^{-1}$ are isomorphic on $\widehat {\mathcal Y} \setminus \mathcal S$.
  Hence, they are isomorphic away from a locus of codimension at least 2 on $\mathcal X$.
  Since $\mathcal X$ is a hypersurface in a smooth variety, and it is smooth in codimension 1, it is normal.
  It follows that $\O(1) \otimes \pi^* \O(E)$ and $\omega_\pi^{-1}$ are isomorphic on $\mathcal X$.
\end{proof}

\begin{proposition}
  \label{prop:good4} The family $\pi: \mathcal{X} \to \td{S^{[2]}}$ is
  good.
\end{proposition}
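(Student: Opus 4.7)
The plan is to show that every fiber $\mathcal{X}_b$ has a finite automorphism group and then invoke \autoref{ex:finaut}. The analysis splits according to whether $b$ lies in the open complement $U := \widetilde{S^{[2]}} \setminus E$ or on the exceptional divisor $E = \bigsqcup_{i=1}^{10} E_i$.

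For $b \in U$, the fiber $\widetilde\pi^{-1}(b) = S$ carries an admissible length-2 subscheme $\widetilde{\mathcal Z}_b$---admissibility here is precisely the condition we secured by blowing up $\Lambda$. Contracting $S \to \P^2$ along the four exceptional $(-1)$-curves converts $\widetilde{\mathcal Z}_b$ into a length-6 subscheme $Z_b \subset \P^2$ consisting of the $4$ original blown-up points together with the $2$ new ones. The loci $\Lambda$ were chosen exactly so that $Z_b$ is admissible in the sense of \autoref{def:admissible}, and $\mathcal{X}_b$ coincides with $X_{Z_b}$ as in \autoref{def:cubicZ}. Applying \autoref{prop:good}, finiteness of $\Aut(\mathcal{X}_b)$ reduces to finiteness of $\Aut(\P^2, Z_b)$; since any such automorphism must fix the $4$ original general points (whose stabilizer in $\PGL_3$ is trivial), we obtain $\Aut(\P^2, Z_b) = 1$.

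For $b = e \in E_i$, \autoref{prop:LYe} and \autoref{prop:nodirectrix} exhibit $\mathcal{X}_e$ as the cubic surface associated to the admissible length-5 subscheme $W_e := \widetilde{\mathcal Z}_e \cup \tau_i$ on $\F_1$. Writing $\F_1 = \Bl_q\P^2$ for the auxiliary contraction of the directrix, the condition $W_e \not\subset \text{directrix}$ from \autoref{prop:nodirectrix} ensures that $W_e$ maps isomorphically to a length-5 subscheme $W_e' \subset \P^2$ disjoint from $q$. Setting $Z_e' := W_e' \cup \{q\}$, a direct pushforward computation shows $\omega_{\F_1}^{-1} \otimes \mathcal{I}_{W_e}$ corresponds to $\O(3) \otimes \mathcal{I}_{Z_e'}$, whence $Z_e'$ is admissible and $\mathcal{X}_e \cong X_{Z_e'}$. \autoref{prop:good} again reduces finiteness of $\Aut(\mathcal{X}_e)$ to finiteness of $\Aut(\P^2, Z_e')$, which follows from a direct combinatorial analysis: the three points of $\tau_i$ are collinear (on the image of $L_i$), and the points $q$ and $\widetilde{\mathcal Z}_e$ together add enough additional constraints to rigidify the configuration.

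The main obstacle is Case 2: carefully unpacking the definition of $\mathcal{X}_e$ via the linear series $|\mathcal{L} \otimes \mathcal{I}_{\widetilde{\mathcal Z}}|$ on the reducible surface $\mathcal{Y}_e = S \cup_{L_i} \F_1$, verifying that it coincides with $X_{W_e}$ (equivalently, with $X_{Z_e'}$ under the auxiliary contraction), and finally establishing the combinatorial finiteness of $\Aut(\P^2, Z_e')$. The first two items follow from tracking how the base locus $T_i$ of $|\mathcal{L}|_{\mathcal{Y}_e}|$ from \autoref{prop:LYe}(4) gets contracted, so that only the $\F_1$-component contributes to the image cubic.
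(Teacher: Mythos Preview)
Your approach is exactly the paper's: show every fiber has finite automorphism group and invoke \autoref{ex:finaut}, reducing via \autoref{prop:good} to finiteness of $\Aut$ of the pair (ambient surface, admissible subscheme). The paper's proof is a two-line sketch (``it is easy to verify using \autoref{prop:good}''), so your added detail is welcome.

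There is, however, a genuine gap in your Case~2 reduction. You write that ``the condition $W_e \not\subset \text{directrix}$ from \autoref{prop:nodirectrix} ensures that $W_e$ maps isomorphically to a length-5 subscheme $W_e' \subset \P^2$ disjoint from $q$.'' This inference is false: \autoref{prop:nodirectrix}(1) says $\widetilde{\mathcal Z}_e$ is \emph{not contained in} the directrix, not that it is \emph{disjoint from} it. Indeed, unwinding the map $j$ in the proof of \autoref{prop:nodirectrix}, one sees that for certain $e \in E_i$ (those corresponding to normal directions that vanish at one point of $\beta(e)$), exactly one point of $\widetilde{\mathcal Z}_e$ lies on the directrix. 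In that situation your blowdown $\F_1 \to \P^2$ does not carry $W_e$ isomorphically onto a length-5 scheme disjoint from $q$, and your formula $Z_e' = W_e' \cup \{q\}$ is not a length-6 admissible subscheme.

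The fix is to argue finiteness of $\Aut(\F_1, W_e)$ directly (the proof of \autoref{prop:good} goes through verbatim with $\F_1$ in place of $\P^2$). The identity component of $\Aut(\F_1)$ fixing the three points $\tau_i$ on the $(+1)$-section is a single $\G_m$, which fixes both sections $\sigma_\pm$ pointwise and nothing else. Since $\widetilde{\mathcal Z}_e$ is disjoint from $\sigma_+$ (\autoref{prop:geometry}(2)), not contained in $\sigma_-$ (\autoref{prop:nodirectrix}(1)), and not contained in a fiber (it projects isomorphically to $L_i$), this $\G_m$ cannot preserve $\widetilde{\mathcal Z}_e$ in any of the possible configurations. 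A minor point in Case~1: the automorphism need not \emph{fix} the four original points, only permute the support of $Z_b'$; but since that support contains four points in general position, the pointwise stabilizer is trivial and finiteness follows.
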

\begin{proof}
  Over $e \not \in E$, the cubic $\mathcal X_e$ is the one associated to $\mathcal Z_e \subset S$, and over $e \in E$, say $e \in E_i$, the cubic $\mathcal X_e$ is the one associated to $\widetilde {\mathcal Z}_e \cup \tau_i \subset \F_1$.
  In both cases, it is easy to verify using \autoref{prop:good} that $\mathcal X_e$ has a finite automorphism group.
\end{proof}

Having constructed the family $\pi \colon \mathcal X \to \widetilde{S^{[2]}}$, we take up the task of calculating the Chern classes of
\[ \mathcal V = \pi_* \left(\omega_\pi^{-1}\right) = \mathcal W \otimes \O_{\widetilde{S^{[2]}}}(E).\]
Let $\mathcal U = \left(\omega_S^{-1}\right)^{[2]}$ be the rank 2 tautological bundle on $S^{[2]}$, defined by
\[ \mathcal U = {\pi_1}_*\pi_2^* \left(\omega_S^{-1}\right),\]
where $\pi_i$ are the two projections on $S^{[2]} \times S$.
\begin{proposition}\label{prop:v4groth}
  In the Grothendieck group of $\widetilde {S^{[2]}}$, we have
  \[
    \mathcal V =  \O(E) + \O^3 + \O(-E)^2 - \mathcal U \otimes \O(-E),
  \]
  where $\O$ denotes $\O_{\widetilde{S^[2]}}$.
\end{proposition}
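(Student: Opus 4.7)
The plan is to start from the identification $\mathcal V = \mathcal W\otimes \O_{\widetilde{S^{[2]}}}(E)$, and compute $[\mathcal W]$ in the Grothendieck group by pushing forward the short exact sequence
\[
0 \to \mathcal L\otimes \mathcal I_{\widetilde{\mathcal Z}} \to \mathcal L \to \mathcal L|_{\widetilde{\mathcal Z}} \to 0
\]
along $\widetilde\pi$. Writing $q\colon \widetilde{S^{[2]}}\times S\to S$ and $p\colon \widetilde{S^{[2]}}\times S \to \widetilde{S^{[2]}}$ for the two projections, I would factor $\widetilde\pi = p\circ\eta$ through the blowup $\eta\colon \mathcal Y \to \widetilde{S^{[2]}}\times S$. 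Since $\widetilde{\mathcal Z}\to \widetilde{S^{[2]}}$ is finite flat of degree two, $R^i\widetilde\pi_*(\mathcal L|_{\widetilde{\mathcal Z}})=0$ for $i>0$, so the K-theoretic identity to establish is
\[
[\mathcal W] = [R\widetilde\pi_*\mathcal L] - [\widetilde\pi_*(\mathcal L|_{\widetilde{\mathcal Z}})].
\]

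For the first term, use $\mathcal L = (q\eta)^*\omega_S^{-1}\otimes \O(-2\mathcal F)$ from \eqref{eqn:l2}. Since $F = \bigsqcup_i F_i = \bigsqcup_i E_i\times L_i$ is regularly embedded of codimension $2$, the standard blowup computation gives $R\eta_*\O(-2\mathcal F) = \mathcal I_F^2$ concentrated in degree zero; then the projection formula yields $R\eta_*\mathcal L = q^*\omega_S^{-1}\otimes \mathcal I_F^2$. I would expand
\[
[\mathcal I_F^2] = [\O] - [\O_F] - [\mathcal I_F/\mathcal I_F^2],
\]
tensor with $q^*\omega_S^{-1}$, and push down by $p$. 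The ingredients are: $h^0(S,\omega_S^{-1})=6$ (quintic del Pezzo); $\omega_S^{-1}|_{L_i}=\O_{L_i}(1)$ with $h^0 = 2$; the splitting of the conormal bundle $N^*_{F_i/\widetilde{S^{[2]}}\times S} = \pi_{E_i}^*\O(-E_i)|_{E_i} \oplus \pi_{L_i}^*\O_{L_i}(1)$; and $h^0(\O_{L_i}(2))=3$.

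For the second term, equation \eqref{eq:Cartier} shows that $j^{-1}(\mathcal F) = \widetilde{\mathcal Z}\cap(E\times S)$, so as a Cartier divisor $j^*\mathcal F = (\widetilde\pi|_{\widetilde{\mathcal Z}})^*E$. Combining this with $\eta j = i$ gives
\[
\mathcal L|_{\widetilde{\mathcal Z}} = i^*q^*\omega_S^{-1}\otimes (\widetilde\pi|_{\widetilde{\mathcal Z}})^*\O(-2E).
\]
Flat base change around the fiber square $\widetilde{\mathcal Z} = \mathcal Z\times_{S^{[2]}}\widetilde{S^{[2]}}$ then identifies $\widetilde\pi_*(i^*q^*\omega_S^{-1}) = \beta^*\mathcal U$, so after applying the projection formula
\[
\widetilde\pi_*(\mathcal L|_{\widetilde{\mathcal Z}}) = \beta^*\mathcal U \otimes \O(-2E).
\]

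Finally, I would collect the Step 1 contributions using the identity (valid for pairwise disjoint effective Cartier divisors, verified by checking equality of Chern characters on each $E_i$)
\[
\sum_{i=1}^{10}[\O(-nE_i)] = 9[\O] + [\O(-nE)],
\]
which collapses the ten-fold sums into $[R\widetilde\pi_*\mathcal L] = [\O]+3[\O(-E)]+2[\O(-2E)]$. Tensoring the difference by $[\O(E)]$ yields the claimed formula. The main obstacle is the bookkeeping in Step 1: one must push down the contributions of $[\O_F]$ and $[\mathcal I_F/\mathcal I_F^2]$ from each $F_i$ to $\widetilde{S^{[2]}}$ and then use the disjointness of the $E_i$ to collapse $\sum_i[\O_{E_i}]$ and $\sum_i[\O(-E_i)|_{E_i}]$ into pure $\O(-nE)$ terms. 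The fact that the answer simplifies to a sum of only line bundles and one copy of $\beta^*\mathcal U(-E)$ serves as an internal sanity check.
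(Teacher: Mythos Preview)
Your proposal is correct and follows essentially the same approach as the paper: split $[\mathcal W]$ as $[\widetilde\pi_*\mathcal L]-[\widetilde\pi_*(\mathcal L|_{\widetilde{\mathcal Z}})]$, compute the first term via $\eta_*\mathcal L=\omega_S^{-1}\otimes\mathcal I_F^2$ and the filtration by $\O_F$ and $\mathcal I_F/\mathcal I_F^2$, compute the second via the identification $j^*\mathcal F=(\widetilde\pi|_{\widetilde{\mathcal Z}})^*E$ and flat base change, and then tensor by $\O(E)$. The paper's write-up is terser---it states $\mathcal B=\mathcal U\otimes\O(-2E)$ in one line and records $\mathcal A=\O^6-\O_E^5-\O_E(-E)^2$ before simplifying---but the logic is identical to yours.
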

\begin{proof}
  We compute the class of $\mathcal W$ and obtain the class of $\mathcal V$ by tensoring by $\O(E)$.
  Recall the family $\widetilde \pi \colon \mathcal Y \to \widetilde{S^{[2]}}$ and the line bundle $\mathcal L$ on it defined in \eqref{eqn:l1}.
  Set
  \[ \mathcal A := \widetilde{\pi}_* \mathcal L \text{ and } \mathcal B := \widetilde{\pi}_* \left( \mathcal L|_{\widetilde{\mathcal Z}} \right).\]
  Then $\mathcal A$ and $\mathcal B$ are vector bundles of rank 6 and 2, respectively.
  By the definition of $\mathcal W$, we have the exact sequence
  \[ 0 \to \mathcal B \to \mathcal A \to \mathcal W \to 0,\]
  and hence $\mathcal W = \mathcal A - \mathcal B$ in the Grothendieck group.
  
  We first compute $\mathcal B$.
  By the push-pull formula and the description \eqref{eqn:l2} of $\mathcal L$, we get
  \begin{equation}\label{eqn:B}
    \mathcal B = \mathcal U \otimes \O_{\widetilde{S^{[2]}}}(-2E).
  \end{equation}
  
  We now compute $\mathcal A$.
  Recall that $\eta \colon \mathcal Y \to \widetilde{S^{[2]}} \times S$ is the blow-up at $F = \bigsqcup_i E_i \times L_i$.
  Using \eqref{eqn:l2} again, we get
  \[
    \eta_* \mathcal L = \omega_S^{-1} \otimes \mathcal I^2_{F}.
  \]
  Using the pair of sequences
  \begin{equation}
    \label{eq:seqA}
    \begin{split}
      &0 \to \mathcal{I}^2_{F} \to \O_{\td{S^{[2]}} \times S} \to \O_{2F} \to 0, \text{ and}\\
      &0 \to \mathcal{I}_{F}/\mathcal{I}_{F}^{2} \to \O_{2F} \to \O_{F} \to 0,
    \end{split}
  \end{equation}
  and tensoring by $\omega_S^{-1}$ gives (in the Grothendieck group)
  \begin{equation}\label{eqn:el}
    \eta_* \mathcal L = \omega_S^{-1} \otimes \left(\O_{\widetilde{S^{[2]}} \times S} - \O_F - \mathcal{I}_F / \mathcal{I}^2_F\right).
  \end{equation}
  On the component $F_i$ of $F$, the conormal bundle $\mathcal{I}_F / \mathcal{I}^2_F$ splits as a direct sum
  \[ \pi_1^*\O_{E_i}(-E_i) \oplus \pi_2^*\O_{L_i}(1),\]
  where $\pi_i \colon F_i \to E_i \times L_i$ are the two projections.
  After substituting in \eqref{eqn:el} and pushing forward to $\widetilde{S^{[2]}}$, we get
  \begin{equation}\label{eqn:A}
    \begin{split}
      \mathcal A &= \O^6- \O_E^{5}- \O_E(-E)^{2}\\
      &= \O + \O(-E)^3 + \O(-2E)^2.
    \end{split}
  \end{equation}
  Subtracting $\eqref{eqn:B}$ from $\eqref{eqn:A}$ and tensoring by $\O(E)$ gives the result.
\end{proof}

Having computed the class of $\mathcal V$, what remains is a computation in the Chow ring of $\widetilde{S^{[2]}}$.
In the computation, we suppress pull-back symbols to ease notation.
We have the following simplification.
\begin{proposition}
    \label{lemma:uE}
  Let $u_i = c_i(\mathcal U)$ for $i = 1, 2$.
  Then  $u_{1}\cdot E = u_{2} \cdot E = 0$.
\end{proposition}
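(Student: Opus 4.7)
The plan is to reduce the assertion to a computation on each component $L_j^{[2]}$ of the center of the blow-up $\beta \colon \widetilde{S^{[2]}} \to S^{[2]}$. Write $E = \bigsqcup_{j=1}^{10} E_j$ with $E_j = \beta^{-1}\left(L_j^{[2]}\right)$, let $\iota_j \colon E_j \hookrightarrow \widetilde{S^{[2]}}$ denote the inclusion, and let $\alpha_j = \beta|_{E_j} \colon E_j \to L_j^{[2]}$. The projection formula gives
\[
u_i \cdot [E_j] = \iota_{j*}\left(\iota_j^* \beta^* u_i\right) = \iota_{j*}\left(\alpha_j^* \left(u_i|_{L_j^{[2]}}\right)\right).
\]
Summing over $j$, it therefore suffices to prove that $u_i|_{L_j^{[2]}} = 0$ in $A^*\left(L_j^{[2]}\right)$ for $i = 1, 2$ and every $j$.

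Next, I identify the restriction $\mathcal U|_{L_j^{[2]}}$. The universal length-$2$ subscheme of $S^{[2]} \times S$, restricted over $L_j^{[2]}$, is contained in $L_j^{[2]} \times L_j$ and coincides with the universal length-$2$ subscheme of $L_j^{[2]}$; hence $\mathcal U|_{L_j^{[2]}}$ is the tautological rank $2$ bundle $\left(\omega_S^{-1}|_{L_j}\right)^{[2]}$. Since $L_j \simeq \P^1$ is a $(-1)$-curve on the smooth del Pezzo surface $S$, adjunction gives $\omega_S^{-1}|_{L_j} \simeq \O_{\P^1}(1)$.

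The key step — and the only one requiring a geometric idea — is to show that $\left(\O_{\P^1}(1)\right)^{[2]}$ is a trivial rank $2$ bundle on $\left(\P^1\right)^{[2]} \simeq \P^2$. For every length $2$ subscheme $Z \subset \P^1$, the restriction map
\[
H^0\left(\P^1, \O(1)\right) \to H^0\left(Z, \O(1)|_Z\right)
\]
is an isomorphism of $2$-dimensional vector spaces, its kernel being $H^0\left(\P^1, \O(-1)\right) = 0$. Globally, this produces a canonical isomorphism from the trivial bundle $H^0\left(\P^1,\O(1)\right) \otimes \O_{(\P^1)^{[2]}}$ onto $\left(\O_{\P^1}(1)\right)^{[2]}$. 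Hence $\mathcal U|_{L_j^{[2]}}$ is trivial, its Chern classes vanish, and $u_1 \cdot E = u_2 \cdot E = 0$ as required.
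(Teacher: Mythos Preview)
Your proof is correct and takes a genuinely different route from the paper's. The paper argues by moving cycles: it represents $u_2$ by $C^{[2]}$ for a general anti-canonical curve $C$ (which meets each $L_j$ in a single point, so $C^{[2]}$ misses $L_j^{[2]}$), and represents $u_1$ by the locus of length-$2$ schemes contained in some member of a general anti-canonical pencil (which misses $L_j^{[2]}$ for the same reason). Since these representatives are disjoint from the blow-up center $\Lambda$, the products with $E$ vanish.

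Your argument instead restricts the bundle $\mathcal U$ itself to each $L_j^{[2]}$ and shows it is trivial, using that $\omega_S^{-1}|_{L_j}\cong\O_{\P^1}(1)$ and that the tautological bundle $\O_{\P^1}(1)^{[2]}$ on $(\P^1)^{[2]}$ is globally generated by, and has the same rank as, $H^0(\P^1,\O(1))$. This is a clean, purely algebraic computation that avoids having to identify geometric representatives for the Chern classes; it also makes transparent \emph{why} the vanishing holds (the bundle is actually trivial on the center, not merely that its Chern classes happen to be represented by disjoint cycles). The paper's approach, on the other hand, fits naturally with the subsequent proposition, where the same geometric representatives of $u_1$ and $u_2$ are used to compute the monomials $u_1^4$, $u_1^2u_2$, $u_2^2$ as enumerative problems.
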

\begin{proof}
  We show that $u_i \in A^i(S^{[2]})$ is supported on cycles disjoint from $\Lambda = \bigsqcup_i L^{[2]}_i$.

  For $u_2$, fix a general $C \in |\omega_S^{-1}|$.
  Then $C$ intersects each exceptional curve $L_i$ transversely in a single point.
  Then $u_2$ is the class of $C^{[2]} \subset S^{[2]}$, which is clearly disjoint from $\Lambda$.
  
  For $u_1$, fix a general pencil of curves $\{C_t \mid t \in \P^1\} \subset |\omega_S^{-1}|$.
  Then for every $t \in \P^1$, the curve $C_t$ intersects each exceptional curve $L_i$ transversely in a single point.
  Then $u_{1}$ is the class of
  \[\{[Z] \in S^{[2]} \mid Z \subset C_{t} \text{ for some $t\in \P^{1}$}\},\]
  which is clearly disjoint from $\Lambda$.
\end{proof}

We now compute the non-trivial degree 4 intersection numbers.
\begin{proposition}
  \label{prop:monomialsU}
  On $\widetilde{S^{[2]}}$, we have
  \[
    \begin{split}
      \int u_{1}^{4} = 36, \qquad   \int u_{1}^{2}u_{2} = 15, \qquad 
      \int  u_{2}^{2} = 10, \qquad    \int  E^{4} = -30.
    \end{split}
\]
\end{proposition}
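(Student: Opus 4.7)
The plan is to reduce all four intersection numbers to computations on ambient spaces where the geometry is explicit.

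For the three monomials in $u_1, u_2$: these classes are pulled back from $S^{[2]}$ along $\beta \colon \widetilde{S^{[2]}} \to S^{[2]}$, so by the projection formula the problem reduces to computing $\int_{S^{[2]}} u_1^4$, $\int_{S^{[2]}} u_1^2 u_2$, and $\int_{S^{[2]}} u_2^2$. On $S^{[2]}$, I would pull back once more along the degree-$2$ quotient $q \colon Z := \Bl_\Delta(S \times S) \to S^{[2]}$, so each integral becomes $\frac{1}{2}\int_Z q^*(\,\cdot\,)$. The crucial ingredient is an exact sequence on $Z$,
\[
  0 \to q^*\mathcal{U} \to \tau_1^*\omega_S^{-1} \oplus \tau_2^*\omega_S^{-1} \to \tau_1^*\omega_S^{-1}\big|_{\tilde\Delta} \to 0,
\]
obtained by a Mayer--Vietoris argument applied to $\omega_S^{-1}$ restricted to the union of the graphs of the two projections $\tau_i \colon Z \to S$ inside $Z \times S$ (these graphs meet exactly along the exceptional divisor $\tilde\Delta$). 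Reading off Chern classes gives $q^*u_1 = \tilde\ell_1 + \tilde\ell_2 - \tilde\Delta$ and $q^*u_2 = \tilde\ell_2(\tilde\ell_1 - \tilde\Delta)$, where $\tilde\ell_i := \tau_i^* c_1(\omega_S^{-1})$.

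The remaining intersection numbers on $Z$ follow from the relation $\xi^2 + c_1(T_S)\xi + c_2(T_S) = 0$ on $\tilde\Delta = \P(T_S)$ (in the paper's convention, with $\xi = -\tilde\Delta|_{\tilde\Delta}$), together with the numerical inputs $c_1(\omega_S^{-1})^2 = 5$, $c_1(T_S) = c_1(\omega_S^{-1})$, and $c_2(T_S) = \chi_{\text{top}}(S) = 7$ for the quintic del Pezzo. A straightforward multinomial expansion of $(q^*u_1)^4$, $(q^*u_1)^2(q^*u_2)$, and $(q^*u_2)^2$, followed by division by $2$, should yield $36$, $15$, and $10$.

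For $\int E^4$, the ten components $E_i$ of $E$ are disjoint, so it suffices to compute each $\int E_i^4$. By the Segre-class formula for the exceptional divisor of a codimension-two blow-up,
\[
  \int E_i^4 = -\int_{L_i^{[2]}} s_2(N_i) = -\int_{\P^2}\bigl(c_1(N_i)^2 - c_2(N_i)\bigr),
\]
where $N_i := N_{L_i^{[2]}/S^{[2]}}$. Since $L_i$ is a $(-1)$-curve on $S$, I would identify $N_i \cong (N_{L_i/S})^{[2]} = \O_{\P^1}(-1)^{[2]}$, a tautological rank-$2$ bundle on $(\P^1)^{[2]} \cong \P^2$. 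Its Chern classes are computed by the same double-cover method applied to $\P^1 \times \P^1 \to \P^2$, giving $c_1(N_i)^2 = 4$, $c_2(N_i) = 1$, whence $\int E_i^4 = -3$ and $\int E^4 = -30$.

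The main conceptual step is obtaining the exact sequence for $q^*\mathcal{U}$; once it is in place, everything reduces to routine intersection theory. The main technical hazard is keeping track of sign conventions---in particular, verifying $\O_Z(\tilde\Delta)|_{\tilde\Delta} = \O_{\P(T_S)}(-1)$ in the paper's projectivization convention, since the opposite sign would flip several of the intermediate intersection numbers and break the arithmetic.
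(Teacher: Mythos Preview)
Your proposal is correct and complete. The computation for $E^4$ matches the paper's exactly in spirit (the paper simply asserts $N_{L_i^{[2]}/S^{[2]}} \cong \O_{\P^2}(-1)^{\oplus 2}$ directly, while you recover the same Chern numbers via the tautological-bundle description).

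For the three monomials in $u_1, u_2$, however, you take a genuinely different route from the paper. The paper interprets each number enumeratively using explicit cycle representatives: $\int u_2^2 = \binom{5}{2}$ counts pairs of points in the intersection of two general anti-canonical curves; $\int u_1^2 u_2 = 15$ comes from comparing the geometric genus ($1$) and arithmetic genus ($16$) of the image of a general $C \in |\omega_S^{-1}|$ under two anti-canonical pencils $C \to \P^1 \times \P^1$; and $\int u_1^4 = 36$ is obtained by applying the double-point formula to the rational map $S \dashrightarrow (\P^1)^4$ given by four general pencils. Your approach---pulling back along the double cover $\Bl_\Delta(S\times S) \to S^{[2]}$, extracting $q^*u_1$ and $q^*u_2$ from the Mayer--Vietoris sequence, and reducing to the Chern numbers $c_1(\omega_S^{-1})^2 = 5$, $c_2(T_S) = 7$---is the standard algebraic method for tautological classes on $S^{[2]}$. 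It is more systematic and avoids the need for the double-point formula or a resolution of the rational map; the paper's approach is more geometric and makes the enumerative meaning of each number transparent. Either is a perfectly good proof.
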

\begin{proof}
  We know explicit cycles on $S^{[2]}$ that represent $u_1$ and $u_2$ (see the proof of \autoref{lemma:uE}).
  Using these, we convert the intersection numbers into enumerative problems, which we solve.

  The number $\int u_1^4$ is the answer to the following problem.
  Choose $4$ general anti-canonical pencils on $S$, and let $f \colon S \dashrightarrow (\P^1)^4$ be the rational map induced by them.
  Then $u_1^4$ is the number of double points of the map $f$ (pairs of points in $S$ that have the same image under $f$).
  We find that the number is 36 by applying the double point formula \cite[Theorem~2]{ful:78} to a resolution of $f$.

  The number $\int u_{1}^{2}u_{2}$ is the answer to the following problem.
  Fix a general anti-canonical $C \subset S$ and two general anti-canonical pencils.
  Then $\int u_1^2u_2$ is the number of double points of the map $C \to \P^1 \times \P^1$ induced by the two pencils.
  Using the genus of $C$, which is 1, and the arithmetic genus of $f(C)$, which is 16, we see that the number is 15.

  The number $\int u_2^2$ is the number of pairs of points common to two general anti-canonical curves $C_1$ and $C_2$.
  Since there are $5$ points in $C_1 \cap C_2$, the number of pairs is ${5 \choose 2} = 10$.

  Finally, to compute $\int E^4$, observe that $E$ is the disjoint union of $E_i$ for $i = 1, \dots, 10$.
  Each $E_i$ is the $\P^1$-bundle over $L_i$ defined by
  \[ E_i = \P N_{L_i^{[2]}/S^{[2]}},\]
  and the restriction of $E_i$ to $E_i$ is $\O(-1)$.
  We have $L_i^{[2]} \cong \P^2$ and
  \[ N_{L_i^{[2]}/S^{[2]}} = N_{L_i/S}^{[2]} \cong \O_{\P^2}(-1) \oplus \O_{\P^2}(-1).\]
  Now it is a straightforward to check that $E_i^4 = -3$, and hence $E^2 = -30$.
\end{proof}
\begin{proposition}\label{prop:4deg4}
  Let $\mathcal V = \pi_*\left(  \omega_{\pi}^{-1} \right)$ be the anti-canonical section bundle of $\pi \colon \mathcal X \to \widetilde{S^{[2]}}$,
  and let $v_i = c_i(\mathcal V)$.
  Then we have
  \[
    \int_{B_{4}} v_{1}^{4} = 6, \qquad
    \int_{B_{4}} v_{1}^{2}v_{2} = 21, \qquad
      \int_{B_{4}} v_{1}v_{3} = 6, \qquad
      \int_{B_{4}} v_{2}^{2} = 16, \qquad
      \int_{B_{4}} v_{4} = 1.
  \]
\end{proposition}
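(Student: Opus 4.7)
The plan is to reduce the five intersection numbers to the known evaluations $\int u_1^4 = 36$, $\int u_1^2 u_2 = 15$, $\int u_2^2 = 10$, $\int e^4 = -30$ of \autoref{prop:monomialsU}, by computing each Chern class $v_i = c_i(\mathcal V)$ in terms of $u_1, u_2$ and $e = [E]$ from the Grothendieck-group identity of \autoref{prop:v4groth} and simplifying with the vanishing $u_1 e = u_2 e = 0$ of \autoref{lemma:uE} (which also kills every product $u_i\cdot e^k$ with $k \geq 1$).

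Concretely, from $[\mathcal V] = [\O(E)] + 3[\O] + 2[\O(-E)] - [\mathcal U(-E)]$ I would first write
\[
c(\mathcal V) \;=\; \frac{(1+e)(1-e)^2}{1 + (u_1 - 2e) + (u_2 + e^2)},
\]
where the simplification in the denominator already uses $u_1 e = 0$. The numerator equals $1 - e - e^2 + e^3$. To invert the denominator $(1 + D)$ up to total degree four, I would split $D = D_1 + D_2$ into its degree-one piece $D_1 = u_1 - 2e$ and degree-two piece $D_2 = u_2 + e^2$, compute the relevant products $D_1^k D_2^\ell$ (discarding every term containing $u_i e^k$ with $k \geq 1$), and assemble the degree-by-degree expansion of $(1+D)^{-1}$.

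Multiplying the two expansions and collecting terms should produce the remarkably clean identities
\[
v_1 = e - u_1,\quad v_2 = u_1^2 - u_2,\quad v_3 = 2u_1 u_2 - u_1^3,\quad v_4 = u_1^4 - 3u_1^2 u_2 + u_2^2,
\]
in which $e$ survives only in $v_1$; all other $e$-contributions cancel miraculously thanks to the $u_i e = 0$ relations. Given these formulas, each required integral becomes a routine substitution: for instance $v_1^4 = (e - u_1)^4 = e^4 + u_1^4$ integrates to $-30 + 36 = 6$, $v_1^2 v_2 = (e^2 + u_1^2)(u_1^2 - u_2) = u_1^4 - u_1^2 u_2$ integrates to $36 - 15 = 21$, and $v_4$ integrates to $36 - 45 + 10 = 1$.

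There is no genuine conceptual obstacle here: the entire argument is a controlled algebraic simplification. The only point requiring any care is the consistent application of $u_i e^k = 0$ throughout the series inversion; this is precisely what causes the formulas for $v_2, v_3, v_4$ to involve $u_1, u_2$ alone and keeps the final numerical check short.
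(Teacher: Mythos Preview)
Your proposal is correct and follows exactly the approach the paper indicates: combine the Grothendieck-group expression for $\mathcal V$ from \autoref{prop:v4groth} with the vanishing $u_i \cdot E = 0$ of \autoref{lemma:uE} and the four intersection numbers of \autoref{prop:monomialsU}. The paper's own proof is a one-liner deferring to these inputs, and you have accurately unwound the Whitney-sum computation (your closed forms $v_1 = e - u_1$, $v_2 = u_1^2 - u_2$, $v_3 = 2u_1 u_2 - u_1^3$, $v_4 = u_1^4 - 3u_1^2 u_2 + u_2^2$ and the resulting integrals all check out).
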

\begin{proof}
  Follows from the expression for $\mathcal V$ in the Grothendieck group we found in \autoref{prop:v4groth} and the Chern class computations we did in \autoref{prop:monomialsU} and \autoref{prop:4deg4}.
\end{proof}

Let $\mu \colon \widetilde{S^{[2]}} \dashrightarrow \M$ be the map to the moduli space of cubic surfaces induced by the family $\pi \colon \mathcal X \to \widetilde{S^{[2]}}$.
\begin{proposition}
  The degree of $\mu \colon \widetilde{S^{[2]}} \dashrightarrow \M$ is $36 \times 6!$.
\end{proposition}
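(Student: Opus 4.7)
My plan is to compute $\deg \mu$ by counting set-theoretic preimages of a generic $[X_0] \in \M$ directly. Since $\beta \colon \widetilde{S^{[2]}} \to S^{[2]}$ is an isomorphism away from the exceptional divisor $E$, and a generic fiber of $\mu$ avoids both $E$ and the non-admissible locus $\Lambda$, we have
\[
  \deg \mu \;=\; \#\bigl\{ \{s_1,s_2\} \in S^{[2]} : X_Z \cong X_0 \bigr\}.
\]

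Next I would set up a bijection between this preimage set and pairs $(\{e_1,e_2\},\phi)$, where $\{e_1,e_2\} \subset X_0$ is an unordered pair of disjoint $(-1)$-curves and $\phi \colon X_0/\{e_1,e_2\} \xrightarrow{\sim} S$ is an isomorphism. Given $Z$ with $X_Z \cong X_0$, the pair $\{e_1,e_2\}$ consists of the two exceptional divisors of $X_Z \to S$ transported to $X_0$, and $\phi$ is the induced map on contractions; conversely, $(\{e_1,e_2\},\phi)$ recovers $Z = \phi(\{s'_1,s'_2\})$, where $s'_i$ is the image of $e_i$. For generic $X_0$ (so that $\Aut(X_0)$ is trivial) and generic $Z$ (so that the $\Aut(S)$-stabilizer of $\{s_1,s_2\}$ is trivial), this correspondence is bijective.

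Two classical inputs then finish the count. First, a smooth cubic surface contains exactly $216$ unordered pairs of disjoint lines: each of its $27$ lines meets $10$ others (two per tritangent plane containing the line) and is disjoint from $27 - 1 - 10 = 16$, giving $\tfrac{1}{2}\cdot 27 \cdot 16 = 216$. Second, contracting any such pair produces a smooth del Pezzo surface of degree $5$, unique up to isomorphism, with $|\Aut(S)| = |S_5| = 120$. Therefore
\[
  \deg \mu \;=\; 216 \times 120 \;=\; 25920 \;=\; 36 \times 6!.
\]

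The main technical point to verify is that the generic fiber of $\mu$ really does lie in the admissible locus and misses $E$, so that the blow-up $\beta$ introduces no additional preimages; this will follow from openness of admissibility together with the observation that a generic cubic surface corresponds to a generic admissible pair of points on $S$. A small auxiliary check is that for generic $X_0$ the induced pair $\{s_1,s_2\}$ has trivial stabilizer in $\Aut(S)$, so the $216 \times 120$ data points are in fact pairwise distinct in $S^{[2]}$.
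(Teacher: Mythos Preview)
Your argument is correct and arrives at the right number, but it takes a different route from the paper's. The paper observes that the map $\mu$ factors through a \emph{birational} map $\widetilde{S^{[2]}} \dashrightarrow \M^\dagger/S_2$, where $\M^\dagger$ is the configuration space of six ordered points in $\P^2$ and the $S_2$ swaps the last two; since $\M^\dagger \to \M$ has degree $72 \times 6!$, the degree of $\mu$ is half of that, namely $36 \times 6!$. The birationality is immediate because four general points in $\P^2$ have a unique $\PGL_3$-orbit representative matching the four blown-up points of $S$, so the remaining unordered pair is determined.

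Your approach instead counts preimages directly via the bijection with pairs $(\{e_1,e_2\},\phi)$, invoking the classical facts that a smooth cubic has $216$ unordered pairs of skew lines and that the quintic del Pezzo has $|\Aut(S)| = 120$. This is a perfectly valid alternative; it trades the single ``birational lift'' observation for two pieces of combinatorial input, and has the advantage of being self-contained without reference to $\M^\dagger$. One small remark: the auxiliary check you flag (trivial $\Aut(S)$-stabilizer of $\{s_1,s_2\}$) is actually subsumed by the triviality of $\Aut(X_0)$, since two data $(\{e_1,e_2\},\phi)$ and $(\{e_1',e_2'\},\phi')$ yielding the same $\{s_1,s_2\}$ would produce two isomorphisms $X_0 \to \Bl_{\{s_1,s_2\}}S$ and hence a nontrivial automorphism of $X_0$.
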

\begin{proof}
  Recall that the moduli space of marked cubic surfaces $\M^\dagger$ is the configuration space of 6 ordered points in $\P^2$.
  The map $\mu$ clearly lifts to a birational map
  \[ \widetilde S^{[2]} \dashrightarrow \M^\dagger / S_2,\]
  where the $S_2$ permutes the last 2 points of the configuration.
  Since the degree of $\M^\dagger \to \M$ is  $72 \times 6!$, the degree of $\mu$ is half of that.
\end{proof}

Using \autoref{prop:goodisgood} and the computation of the Chern classes and the degree, we obtain the following relation on the undetermined coefficients of $[\Orb(X)]$ in
\eqref{eqn:chernexpansion} for a general cubic surface $X$:
\begin{align}
  \label{eq:relation4}
  6 a_{1^{4}} + 21 a_{1^{2}2} + 6a_{13} + 16a_{2^2}+  a_{4} = 36 \times 6!.
\end{align}

\subsection{Isotrivial families}
In this section, we continue the theme of constructing good families, but with a twist.
Let $X \subset \P^3$ be a cubic surface with an automorphism group $G$.
We then get a family of cubic surfaces
\[ \pi \colon [X/G] \to BG,\]
and hence a map from $BG$ to the moduli stack of cubic surfaces
\[ \mu \colon BG \to \mathscr M.\]
Suppose we know that the family $\pi$ is good.
In this case, this simply means that $X$ is not in the closure of a general cubic surface $S$.
Then, we get
\[ \mu^* [\Orb(S)] = 0,\]
which in turn gives a linear relation among the coefficients of the expression for $[\Orb(S)]$ pulled back to $A^4(BG)$.
To get a useful relation, the group $A^4(BG)$ must be rich.
In particular, we must take $G$ to be infinite; otherwise, $A^4(BG)$ is torsion, and we only get a congruence relation.

If we wish to obtain a family over a schematic base instead of $BG$, we can easily do so.
We consider an arbitrary scheme $B$ with the free action of $G$ such that the quotient $B/G$ is a scheme, and take the family to be
\[ \pi \colon (X \times B)/G \to B/G,\]
where $G$ acts on $X \times B$ diagonally.
In particular, for $G = \G_m$, the only group we use, we can take $B$ to be an arbitrary $\G_m$-torsor (line bundle minus the zero section) over a scheme.
See \autoref{sec:schematicexample} for an example.

To implement the strategy outlined above, we need to prove that the cubic surface $X$ does not lie in the closure of the orbit of a general cubic surface.
Following \autoref{def:goodfamily}, say that $X$ is \emph{good} if it has this property.
\autoref{prop:isogood} proves that the following cubic surfaces are good (the parenthesis describes the singularities):
\begin{enumerate}
\item $x_0x_1x_3 = x_2^3$ \quad $(3A_2)$
\item $x_3(x_0x_2-x_1^2) = x_0x_1^2$ \quad $(A_3 + 2A_1)$
\item $x_3(x_0x_2-x_1^2) = x_0^2x_1$ \quad $(A_4 + A_1)$
\item $x_3x_0^2 = x_1^3 + x_2^3$ \quad $(D_4)$
\end{enumerate}
For the reader's amusement, we include real pictures of the surfaces above in \autoref{fig:surfaces}.
\begin{figure}
  \begin{subfigure}{.24\textwidth}
    \centering
    \includegraphics[width=\textwidth]{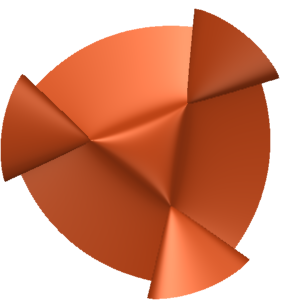}
    \caption{$3A_2$}
  \end{subfigure}
  \begin{subfigure}{.24\textwidth}
    \centering
    \includegraphics[width=\textwidth]{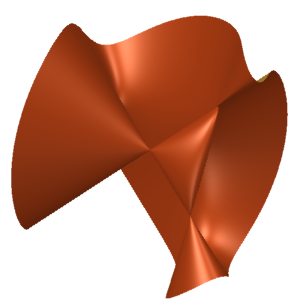}
    \caption{$A_3+2A_1$}
  \end{subfigure}
  \begin{subfigure}{.24\textwidth}
    \centering
    \includegraphics[width=\textwidth]{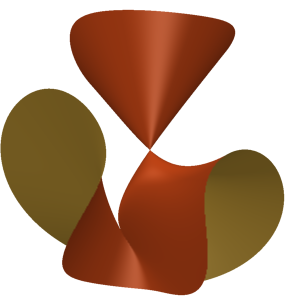}
    \caption{$A_4+A_1$}
  \end{subfigure}
  \begin{subfigure}{.24\textwidth}
    \centering
    \includegraphics[width=\textwidth]{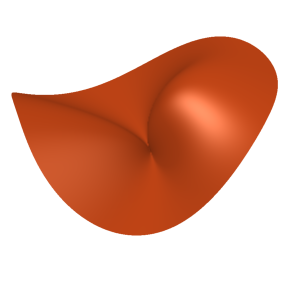}
    \caption{$D_4$}
  \end{subfigure}
  \caption{Some cubic surfaces with a $\G_m$-action that are not in
    the closure of the orbit of a generic cubic surface. We used the
    3D grapher
    \href{https://singsurf.org/parade/Cubics.php}{SURFER} for these
    images.}
  \label{fig:surfaces}
\end{figure}

Let $X \subset \P^3$ be a good cubic surface.
Let $G$ be the automorphism group of $X$ and $\G_m \to G$ a one-parameter subgroup.
The family
\[ [X/\G_m] \to B\G_m\]
yields a relation
\begin{equation}\label{eqn:isorel}
  a_{1^4}v_1^4 + a_{1^2\cdot 2} v_1^2v_2 + a_{1\cdot 3} v_1v_3 + a_{2^2}v_2^2 + a_4 v_4 = 0,
\end{equation}
where the $v_i$ are the Chern classes of the push-forward of the anti-canonical bundle.
Let us explain how to compute the push-forward of the anti-canonical bundle in this setting.
For an integer $a$, let $\chi(a)$ denote the $1$-dimensional $\G_m$ representation where the $\G_m$ action is given by
\[ t \colon v \mapsto t^a v.\]
Suppose $X \subset \P^3$ is the zero-locus of a homogeneous cubic form $F \in k[x_0,x_1,x_2,x_3]$.
Assume that the $\G_m$ acts on the variables $x_i$ by scaling, say $t \in \G_m$ acts by
\[ t \colon x_i \mapsto t^{w_i}x_i,\]
where $w_i \in \Z$.
Since $V(F)$ is $\G_m$-fixed, there exists $w \in \Z$ such that
\[ F \left(t x_0, t x_1, t x_2, t x_3\right) = t^w F\left(x_0, x_1, x_2,x_3\right).\]
Write $V^\vee$ for the $k$-vector space $\langle x_0, x_1, x_2,x_3 \rangle$.
As a $\G_m$-representation, we have
\[ V^\vee = \chi(w_1) \oplus \chi(w_2) \oplus \chi(w_3) \otimes \chi(w_4).\]
The cubic $F$ defines a $\G_m$-invariant section of $\Sym^3 V^\vee \otimes \chi(-w)$.
Thus, we can view $X$ as the zero locus of the line bundle $\O(3) \otimes \pi^*\chi(-w)$ in the projective bundle $\pi \colon [\P V/\G_m] \to B\G_m$.
By the adjunction formula, we get
\[ \omega_X^{-1} = \O_{\P V}(1) \otimes \det V \otimes \pi^*\chi(w) |_X,\]
and hence
\begin{equation}\label{eqn:rep}
\begin{split}
  \pi_*\left( \omega_X^{-1} \right) &= V^\vee \otimes \det V \otimes \chi(w) \\
  &\cong \left( \bigoplus \chi(w_i)  \right) \otimes \chi\left(-\sum w_i\right) \otimes \chi(w).
\end{split}
\end{equation}

\subsection{The family $3A_2$}
Consider
\[ X = V(x_0x_1x_3 - x_2^3).\]
A $\G_m$ stabilizing $X$ acts on the variables by weights $(a,b,0,c)$ where $a,b,c \in \Z$ are any such that $a+b+c = 0$.
Note that in this case, the weight $w$ of the cubic equation defining $X$ is $0$.
Hence, from \eqref{eqn:rep}, we get
\[ \pi_*\left(\omega^{-1}_X\right) = \chi(a) \oplus \chi(b) \oplus \chi(0) \oplus \chi(c).\]
Letting $q = c_1(\chi(1)) \in A^1(B\G_m)$, we get
\[ v_1 = 0, \quad v_2 = (ab+bc+ca) \cdot q^2, \quad v_3 = abc \cdot q^3, \quad v_4 = 0,\]
and hence
\[ v_1^4 = 0, \quad v_1^2v_2 = 0, \quad v_1v_3 = 0,\quad v_2^2 \neq 0, \quad v_4 = 0.\]
Substituting in \eqref{eqn:isorel} yields the relation
\begin{equation}\label{eqn:iso1}
  a_{2^2} = 0.
\end{equation}

\subsection{The family $A_3 + 2A_1$}
Consider
\[ X = V(x_3(x_0x_2-x_1^2) - x_0x_1^2).\]
A $\G_m$ stabilizing $X$ acts on the variables by weights $(-3,1,5,-3)$.
The weight $w$ of the cubic equations defining $X$ is $-1$.
From \eqref{eqn:rep}, we get
\[ \pi_*\left( \omega_X^{-1} \right) = \chi(-4) \oplus \chi(0) \oplus \chi(4) \oplus \chi(-4).\]
Setting $q = c_1(\chi(1)) \in A^1(B\G_m)$, we get
\[ v_1 = -4q, \quad v_2 = -16q^2, \quad v_3 = 64q^3, \quad v_4 = 0,\]
and hence
\[ v_1^4 = 256q^4 \quad v_1^2v_2 = -256q^4, \quad v_1v_3 = -256q^4, \quad v_2^2 = 256q^4, \quad v_4 = 0.\]
Substituting in \eqref{eqn:isorel} yields the relation
\begin{equation}\label{eqn:iso2}
  a_{1^4} - a_{1^2 2}- a_{13} +  a_{2^2}  = 0.
\end{equation}

\subsection{The family $A_4 + A_1$}
Consider
\[ X = V(x_3(x_0x_2-x_1^2) - x_0^2x_1).\]
A $\G_m$ stabilizing $X$ acts on the variables by weights $(1,-1,-3,3)$.
The weight $w$ of the cubic defining $X$ is $1$.
From \eqref{eqn:rep}, we get
\[ \pi_*\left( \omega_X^{-1} \right) = \chi(2) \oplus \chi(0) \oplus \chi(-2) \oplus \chi(4).\]
Setting $q = c_1(\chi(1)) \in A^1(B\G_m)$, we get
\[ v_1 = 4q, \quad v_2 = -4q^2, \quad v_3 = -16q^3, \quad v_4 = 0,\]
and hence
\[ v_1^4 = 256q^4 \quad v_1^2v_2 = -64q^4, \quad v_1v_3 = -64q^4, \quad v_2^2 = 16q^4, \quad v_4 = 0.\]
Substituting in \eqref{eqn:isorel} yields the relation
\begin{equation}\label{eqn:iso3}
  16a_{1^4} -4a_{1^2 2}- 4a_{13} + a_{2^2}  = 0.
\end{equation}

\subsection{The family $D_4$}
Consider
\[ X = V(x_3x_0^2 - x_1^3-x_2^3).\]
A $\G_m$ stabilizing $X$ acts on the variables by weights $(5,1,1,-7)$.
The weight $w$ of the cubic equation defining $X$ is $3$.
By \eqref{eqn:rep}, we get
\[ \pi_*\left( \omega_X^{-1} \right) = \chi(8) \oplus \chi(4) \oplus \chi(4) \oplus \chi(-4).\]
Setting $q = c_1(\chi(1))$, we get
\[ v_1 = 12q, \quad v_2 = 16q^2, \quad v_3 = -192q^3, \quad v_4 = -512q^4,\]
and hence
\[ v_1^4 = 20736q^4 \quad v_1^2v_2 = 2304q^4, \quad v_1v_3 = -2304q^4, \quad v_2^2 = 256q^4, \quad v_4 = -512q^4.\]
Substituting in \eqref{eqn:isorel} yields the relation
\begin{equation}\label{eqn:iso4}
  81a_{1^4} +9a_{1^2 2}- 9a_{13} +  a_{2^2}-2a_4  = 0.
\end{equation}

\subsection{Families over a base scheme}\label{sec:schematicexample}
By choosing a suitable base $B$ and a map $B \to B \G_m$, we
can construct an isotrivial family over $B$ by pulling back one of the
families above.  We describe this explicitly in an example.

Let $B = \P^4$.  Set
\[V^\vee = \mathcal O(5) \oplus \mathcal O(1) \oplus \mathcal O(1) \oplus
  \mathcal O(-7).\] Let $x_0, x_1, x_2, x_3$ be generators of the four
summands on the standard $\A^4 \subset \P^4$.  We have
the section $(x_3x_0^2-x_1^3-x_2^3)$ of
$\Sym^3 V^\vee = \mathcal O(3)^{\oplus 20}$.  Note that this section has a
pole of order $3$ along the hyperplane at infinity.  As a result, it
defines a section $\xi$ of $\Sym^3 V^\vee \otimes \mathcal O(-3)$, which is
nowhere vanishing.  Equivalently, $\xi$ is a section of
$\mathcal O_{\P V}(3) \otimes \pi^* \mathcal O(-3)$.  Our family
$\mathcal X \to B$ is defined by the zero-locus of $\xi$ in
$\P V \to B$.

\section{The equivariant orbit class}
\label{sec:tietogether}
We use the information provided by the test families to prove the theorems advertised in the introduction.
Let $V$ be a 4 dimensional vector space.
Recall our notation $W$ for the $\GL V$ representation 
\[W = \Sym^3V^\vee \otimes \det V\]
and
\[\mathscr M = \left[ W \setminus \{0\} / \GL V \right]\]
for the moduli stack of cubic surfaces.
\begin{theorem}\label{thm:eqvclass}
  There exists a non-empty open subset $U \subset W$ such that for every cubic surface $X$ represented by a point of $U$, the class of $\Orb(X)$ in $A^4_{\GL V}(W)$ is given by
  \[
    [\Orb(X)] = 1080 \cdot \left(v_{1}^{2}v_{2} - v_{1}v_{3}+ 9v_{4}\right),
  \]
  where $v_i = c_i(V)$ are the Chern classes of the standard representation of $\GL V$.
\end{theorem}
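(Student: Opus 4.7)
The plan is to apply the method of undetermined coefficients. Write
\[
[\Orb(X)] = a_{1^4} v_1^4 + a_{1^2 2} v_1^2 v_2 + a_{1 3} v_1 v_3 + a_{2^2} v_2^2 + a_4 v_4
\]
in $A^4_{\GL V}(W) = \Z[v_1,v_2,v_3,v_4]_4$, with integer coefficients that depend a priori on $X$. The goal is to pin these five coefficients down using the eight linear relations \eqref{eq:relation1}--\eqref{eq:relation4} and \eqref{eqn:iso1}--\eqref{eqn:iso4} supplied by the test families of \autoref{sec:testfamilies}.

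First, I would construct the open set $U \subset W$ promised by the statement. Each of the eight test families is attached to a non-empty Zariski open $U_i \subset W$: for the four blow-up families, $U_i$ is the open subset supplied by \autoref{def:goodfamily} via the goodness results proved family-by-family; for the four isotrivial families, $U_i$ is the open subset of cubic surfaces whose orbit closure does not contain the relevant singular cubic, provided by \autoref{prop:isogood}. Since $W$ is irreducible, $U := \bigcap_{i=1}^{8} U_i$ is still non-empty and Zariski open, and on this $U$ all eight relations must simultaneously hold.

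Next, I would translate each test family into a relation. For a blow-up family $\pi \colon \mathcal X \to B$ with induced moduli map $\mu$, \autoref{prop:goodisgood} gives $\deg \mu = \int_B \mu^*[\Orb(X)]$ for $X \in U$; expanding the right side in the Chern classes of $\mathcal V = \pi_*\omega_\pi^{-1}$ computed in \autoref{lem:v1}, \autoref{lem:v2}, \autoref{prop:v3}, and \autoref{prop:4deg4}, and equating with the degree computations, reproduces the relations \eqref{eq:relation1}--\eqref{eq:relation4}. For an isotrivial family $[X_i/\G_m] \to B\G_m$, the goodness of $X_i$ forces $\mu^*[\Orb(X)] = 0$ in $A^4(B\G_m) = \Z$, and substituting the $\G_m$-weight calculations of the corresponding subsections yields the four scalar equations \eqref{eqn:iso1}--\eqref{eqn:iso4}.

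Finally, I would solve this overdetermined linear system. Equation \eqref{eqn:iso1} gives $a_{2^2}=0$; comparing \eqref{eqn:iso2} and \eqref{eqn:iso3} then forces $a_{1^4}=0$ and $a_{1 3} = -a_{1^2 2}$; relation \eqref{eq:relation1} fixes $a_{1^2 2} = 1080$, whence $a_{1 3} = -1080$, and finally \eqref{eqn:iso4} yields $a_4 = 9 \cdot 1080 = 9720$. The three remaining relations \eqref{eq:relation2}, \eqref{eq:relation3}, \eqref{eq:relation4} then hold automatically, providing an independent consistency check on the entire computation. The main difficulty in this proof does not lie in the present step at all: it lies in producing the eight provably good test families, verifying goodness in the presence of singular fibers (which draws on \autoref{sec:orbclosure}), and carrying out the Chern class calculations on non-trivial bases such as $\widehat{\M}_{0,7}$ and $\widetilde{S^{[2]}}$. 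Once those ingredients are in hand, the present theorem reduces to the linear algebra described above.
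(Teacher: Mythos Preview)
Your proposal is correct and follows essentially the same approach as the paper: write $[\Orb(X)]$ as a linear combination of the five degree-$4$ monomials in the $v_i$, invoke the eight relations \eqref{eq:relation1}--\eqref{eq:relation4} and \eqref{eqn:iso1}--\eqref{eqn:iso4} established by the test families, and solve. Your explicit construction of $U$ as the intersection of the eight open sets and your step-by-step solution of the linear system (using \eqref{eqn:iso1}--\eqref{eqn:iso3} and \eqref{eq:relation1} to determine all coefficients, with the rest as checks) merely spell out what the paper leaves implicit.
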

\begin{proof}
  By excision and homotopy invariance of equivariant Chow groups, we have
  \[ A^4(\mathscr M) = A^4_{\GL V}(W) = A^4_{\GL V}(\bullet).\]
  The ring $A^*_{\GL V}(\bullet)$ is generated by the classes $v_i$, and hence $A^4_{\GL V}(\bullet)$ is a free $\Z$-module generated by the monomials in $v_i$ of total degree 4.
  Therefore, for every cubic surface $X$, we have an expression
  \[[\Orb(X)] = a_{1^4}v_1^4 + a_{1^2\cdot 2} v_1^2v_2 + a_{1\cdot 3} v_1v_3 + a_{2^2}v_2^2 + a_4 v_4,\]
  for some coefficients $a_i \in \Z$.
  For a generic $X$, the families in \autoref{sec:testfamilies} give the following linear relations among the coefficients
  \begin{align*}
    16 \cdot a_{1^4} + 4 \cdot a_{1^2 2} + a_{2^2} &= 4320 \text{ from \eqref{eq:relation1}}\\
    625 a_{1^{4}} + 125 a_{1^{2}2} - 25a_{13} + 25a_{2^2} - 6 a_{4} &= 103680 \text{ from \eqref{eq:relation2}}\\
    3436a_{1^4} + 1076 a_{1^{2}2} +116 a_{13} + 316 a_{2^2} &= 1036800 \text{ from \eqref{eq:relation3}}\\
    6a_{1^{4}} + 21a_{1^{2}2}+6a_{13}+16a_{2^{2}}+a_{4} &= 25920 \text{ from \eqref{eq:relation4}}\\
    a_{2^2} &= 0  \text{ from \eqref{eqn:iso1}}\\
    a_{1^4} - a_{1^2 2}- a_{13} +  a_{2^2}  &= 0  \text{ from \eqref{eqn:iso2}}\\
    16a_{1^4} -4a_{1^2 2}- 4a_{13} +  a_{2^2}  &= 0  \text{ from \eqref{eqn:iso3}}\\
    81a_{1^4} +9a_{1^2 2}- 9a_{13} +  a_{2^2}-2a_4  &= 0  \text{ from \eqref{eqn:iso4}}.
  \end{align*}
  These equations uniquely determine all the coefficients $a_i$ to be as stated.
\end{proof}

\begin{corollary}
  The degree of the closure of the $\PGL_4$ orbit of a generic cubic surface in $\P^3$ is $96120$.
\end{corollary}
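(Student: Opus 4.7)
The plan is to derive this corollary directly from \autoref{thm:eqvclass} by pushing the equivariant class $[\Orb(X)] \in A^4_{\GL V}(W)$ forward to a numerical class on $\P\Sym^3 V^\vee$ and reading off the coefficient of $h^4$, where $h$ denotes the hyperplane class. As explained in the discussion preceding \autoref{thm:eqvclass2} in the introduction, the composite
\[
  A^\bullet_{\GL V}(W) \longrightarrow A^\bullet_{\GL V}(W \setminus \{0\}) \otimes \Q = A^\bullet(\mathscr M) \otimes \Q \longrightarrow A^\bullet\!\left(\P\Sym^3 V^\vee\right) \otimes \Q,
\]
with the last arrow being pullback along the $\PGL V$-torsor $\P\Sym^3 V^\vee \to \mathscr M$, sends $[\Orb(X)]$ to the class of the projective orbit closure $\Orb([F])$. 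Since this closure has codimension $4$, its class is the degree of $\Orb([F])$ times $h^4$, so the problem reduces to computing this coefficient.

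The one genuinely geometric step is to identify the pullback of the tautological bundle $\mathcal V$ to $\P^{19} = \P\Sym^3 V^\vee$. First I would observe that the universal cubic $\mathcal X \subset \P V \times \P^{19}$ is cut out by a section of $\pi_1^*\O_{\P V}(3) \otimes \pi_2^*\O_{\P^{19}}(1)$, so adjunction gives
\[
  \omega_{\mathcal X/\P^{19}}^{-1} = \pi_1^*\O_{\P V}(1)|_{\mathcal X} \otimes \pi_2^*\O_{\P^{19}}(-1)|_{\mathcal X}.
\]
Twisting the Koszul sequence of $\mathcal X$ in $\P V \times \P^{19}$ by $\pi_1^*\O_{\P V}(1)$ and pushing forward along $\pi_2$ (using that $H^\bullet(\P^3, \O(-2)) = 0$) then yields at once
\[
  \mathcal V = V^\vee \otimes \O_{\P^{19}}(-1) \cong \O_{\P^{19}}(-1)^{\oplus 4}.
\]

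Once $\mathcal V$ is in hand the rest is arithmetic: the Chern classes are $v_i = \binom{4}{i}(-h)^i$, so $v_1^2 v_2 - v_1 v_3 + 9 v_4 = (96 - 16 + 9)\,h^4 = 89\, h^4$, and hence $[\Orb([F])] = 1080 \cdot 89 \cdot h^4 = 96120\, h^4$. There is no real obstacle remaining: the hard work is upstream in \autoref{thm:eqvclass}, and at this point the corollary is essentially bookkeeping, with the only care needed being the adjunction-and-pushforward identification of $\mathcal V$ carried out above.
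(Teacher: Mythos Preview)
Your proof is correct and follows essentially the same route as the paper: construct the universal family over $\P^{19} = \P\Sym^3 V^\vee$, identify $\mathcal V$ via adjunction, and plug into the formula from \autoref{thm:eqvclass}. Your identification $\mathcal V \cong \O(-1)^{\oplus 4}$ differs in sign from the paper's $\O(1)^{\oplus 4}$ (the paper appears to contain a typo in its adjunction step), but since the expression $v_1^2 v_2 - v_1 v_3 + 9v_4$ is invariant under $v_i \mapsto (-1)^i v_i$, both yield $89\,h^4$ and hence $96120$.
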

\begin{proof}
  Let $\P^3 = \P V$.
  Consider the universal cubic surface $X \subset \P V\times \P \Sym^3 V^\vee$.
  Let $\pi \colon \mathcal X \to \P \Sym^3 V^\vee$ be the second projection and $\mu \colon \P \Sym^3 V^\vee \to \mathscr M$ the map induced by $\pi$.
  Tautologically, for any cubic surface $X$, the pre-image under $\mu$ of $\Orb(X)$ is the closure of the $\PGL(V)$ orbit of $X$.
  By adjunction, we have
  \[ \omega_\pi = \O(1) \boxtimes \O(1),\]
  and hence
  \[ \mathcal V = \pi_* \omega_\pi \cong \O(1)^4.\]
  For a generic $X$, we evaluate 
  \[ [\Orb(X)] = 1080(v_1^2v_2 - v_1v_3+9v_4)\]
  in $A^4(\P \Sym^3V^\vee)$ and find that the answer is $96120$ (times the class of a codimension 4 linear subspace).
\end{proof}

\begin{corollary}
  Let $X \subset \P^4$ be a general cubic 3-fold, and let $\pi \colon \mathcal X \to \P^4$ be the family of hyperplane sections of $X$.
  Given a generic cubic surface $S$, there are $42120$ fibers of $\pi$ isomorphic to $S$.
\end{corollary}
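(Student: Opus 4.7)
The plan is to apply \autoref{theorem:main} with $B = (\P^4)^\vee$ and $\cX \subset X \times (\P^4)^\vee$ the universal hyperplane section of a general smooth cubic threefold $X$---that is, the incidence variety of pairs $(x, H)$ with $x \in H$. Since $\dim (\P^4)^\vee = 4$ and $(\P^4)^\vee$ is proper, the setup is amenable to \autoref{theorem:main} once goodness is checked and the relevant Chern numbers are computed.

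For the Chern classes, I would observe that $\cX$ is cut out in $X \times (\P^4)^\vee$ by the incidence section of $\O(1,1)$. Since $\omega_X = \O_X(-2)$ and $\omega_{(\P^4)^\vee} = \O(-5)$, adjunction yields $\omega_\pi^{-1} = (\O_X(1) \boxtimes \O(-1))|_\cX$. Tensoring the ideal sheaf sequence
\[
  0 \to \O(-1,-1) \to \O \to \O_\cX \to 0
\]
on $X \times (\P^4)^\vee$ with $\O(1,-1)$ and pushing forward to $(\P^4)^\vee$, using $H^i(X, \O_X) = H^i(X, \O_X(1)) = 0$ for $i > 0$, should give a short exact sequence
\[
  0 \to \O(-2) \to \O(-1)^{\oplus 5} \to \V \to 0.
\]
From this I read off $c(\V) = (1-h)^5/(1-2h)$, where $h$ denotes the hyperplane class on $(\P^4)^\vee$. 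A brief computation should yield $v_1^2 v_2 - v_1 v_3 + 9 v_4 = 39\, h^4$, and multiplication by the universal constant $1080$ produces the claimed $42120$.

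The hard part will be verifying that the family is good. For a general smooth cubic threefold, every hyperplane section is an integral cubic surface with at worst isolated rational double point singularities. Most cubic surfaces with such singularities have finite automorphism group, which activates \autoref{ex:finaut}; the exceptions form a short list of singularity types carrying a $\G_m$-action, including $3A_2$, $A_3{+}2A_1$, $A_4{+}A_1$, and $D_4$---precisely the types exploited as isotrivial test families in \autoref{sec:testfamilies}. Each such type imposes conditions on $X$ beyond generic tangency, cutting out a proper closed subvariety of the space of smooth cubic threefolds, so for $X$ general no hyperplane section realizes any of them. Together with \autoref{ex:finaut}, this establishes goodness, and \autoref{prop:goodisgood} then identifies the number of hyperplane sections isomorphic to a generic $S$ with $\int_{(\P^4)^\vee} \mu^* [\Orb(F)]$, giving $42120$.
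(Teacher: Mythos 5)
Your proposal takes the same route as the paper: pull back $[\Orb(S)]$ along the moduli map of the hyperplane-section family, invoke goodness to rule out excess intersection, and evaluate the universal formula. You have also supplied the Chern class computation the paper omits, and it is correct: from the exact sequence $0 \to \O(-2) \to \O(-1)^{\oplus 5} \to \V \to 0$ on $(\P^4)^\vee$ one reads off $v_1 = -3h$, $v_2 = 4h^2$, $v_3 = -2h^3$, $v_4 = h^4$, whence $v_1^2v_2 - v_1 v_3 + 9 v_4 = 39\,h^4$ and $1080 \cdot 39 = 42120$.

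One small imprecision in your goodness sketch: the list of bad types you quote is illustrative rather than exhaustive (the $A_5$ cubic carries a $\G_a$-action, and one must also rule out $D_5$, $E_6$, cones, and reducible, non-normal, and non-reduced cubics), while $3A_2$ is GIT-semistable and so already harmless by \autoref{ex:semistable}. The cleaner way to run the argument is this: the fibers that threaten goodness are exactly those that are both GIT-unstable and have a positive-dimensional automorphism group, and these form a finite union of $\PGL_4$-orbit closures in $\P^{19}$, each of dimension at most $14$ and hence of codimension at least $5$; an incidence-variety dimension count on pairs $(X,H)$ then shows that the $4$-parameter family of hyperplane sections of a general cubic threefold misses any $\PGL_4$-invariant locus of codimension $\geq 5$. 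The paper dispatches all of this with ``by the genericity of $X$ and $S$,'' so you are filling in a detail it leaves to the reader.
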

\begin{proof}
  Note that two cubic surfaces are isomorphic if and only if they are in the same $\PGL_4$ orbit.
  Consider the pull-back of $\Orb(S)$ to the base $\P^4$ of the family.
  By the genericity of $X$ and $S$, this pull-back consists of distinct reduced points corresponding to the points of $\Orb(S) \setminus \partial \Orb(S)$.
  The statement now follows from \autoref{thm:eqvclass} and an easy Chern class computation, which we omit.
\end{proof}

\subsection{Proof of \texorpdfstring{\autoref{theorem:main}}{first theorem} from the introduction}
The theorem follows immediately from \autoref{thm:eqvclass} and \autoref{prop:goodisgood}.

\subsection{Proof of \texorpdfstring{\autoref{thm:eqvclass2}}{second theorem} from the introduction}\label{proof:eqvclass2}
Let $V$ be a 4-dimensional vector space.
Set
\[\mathscr N = \left[ \Sym^3V^\vee \setminus \{0\} / \GL V \right]. \]
Let $\mathcal V$ denote the vector bundle on $\mathscr N$ corresponding to the standard representation of $\GL V$.
The tautological non-zero section of $\Sym^3V^\vee$ on $\mathscr N$ defines a family of cubic surfaces
\[ \pi \colon \mathscr X \to \mathscr N,\]
and hence a map $\mathscr N \to \mathscr M$.
Under this map, the pull-back of an orbit closure is an orbit closure.
Note that $\mathscr X \subset \P \mathcal V$ is the zero locus of $\O(3)$.
By adjunction, we have
\[ \omega_\pi = \O(1) \otimes \det V |_{\mathscr X},\]
and hence
\[ \pi_* \omega_\pi = V^\vee \otimes \det V.\]
\autoref{thm:eqvclass} gives the class of the pull-back of a generic orbit closure in terms of the Chern classes $v_i = c_i(V^\vee \otimes \det V)$.
Writing them in terms of $c_i = c_i(V)$ yields the result.

\section{Addressing the orbit closure problem}
\label{sec:orbclosure}
We have repeatedly faced the problem of showing that a certain cubic surface is not in the orbit closure of a certain (generic) cubic surface.
In this section, we develop two sets of techniques to address this question.

\subsection{Using GIT}
The first technique uses GIT and applies broadly to any orbit closure problem.
Let $W$ be a quasi-projective variety with a linearized action of a reductive group $G$.
The basic observation is that if $x \in W$ is semi-stable and $s \in W$ is stable, then $x$ does not lie in the orbit closure of $s$.
We prove an extension of this idea that turns out to be highly effective.

\begin{proposition}\label{prop:gitproduct}
  Let $G$ be a reductive group.
  Let $W$ and $H$ be smooth quasi-projective varieties along with linearized actions of $G$.
  Let $x, s \in W$ be points satisfying the following conditions:
  \begin{enumerate}
  \item $x$ has a reductive stabilizer $T \subset G$.
  \item for some $h \in H$ fixed by the identity component of $T$, the point $(x, h) \in W \times H$ is semi-stable.
  \item for all $h \in H$, the point $(s, h) \in W \times H$ is stable.
  \end{enumerate}
  Then $x$ does not lie in the closure of the $G$-orbit of $s$.
\end{proposition}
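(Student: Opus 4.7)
The plan is to argue by contradiction: assume $x$ lies in $\overline{G \cdot s}$, and derive a contradiction from the tension between the semi-stability of $(x,h)$ and the uniform stability of $(s,h')$ for all $h' \in H$. The strategy is to lift the degeneration in $W$ witnessing $x \in \overline{G \cdot s}$ to a degeneration in the product $W \times H$ landing at some $(x, h'') \in W \times H$, and then use the fact that stable orbits are closed in the semi-stable locus to force $x \in G \cdot s$; conditions (1) and (2) then yield a contradiction via the stabilizer.

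The first step I would take is to invoke the valuative criterion for membership in orbit closures: choose a DVR $R$ with fraction field $K$ and residue field $k$, along with an element $g \in G(K)$, such that the $R$-point $g \cdot s$ of $W$ has closed fiber $x$. Consider the $K$-point $g \cdot (s, h) = (g \cdot s, g \cdot h)$ of $W \times H$, with $h$ from condition (2). The $W$-coordinate extends to $R$ automatically; to extend the $H$-coordinate, I would embed $H$ in a $G$-equivariant projective closure $\bar H$ (via Sumihiro's theorem applied to the linearized $G$-action on the quasi-projective variety $H$) and extend $g \cdot h$ to an $R$-point of $\bar H$ with closed fiber some $\bar h \in \bar H$. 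This yields an $R$-family in $W \times \bar H$ whose generic fiber lies in the $G$-orbit of $(s, h)$ and whose closed fiber is $(x, \bar h)$.

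The hard part, which I expect to be the main obstacle, is to arrange $\bar h \in H$ with $(x, \bar h)$ semi-stable in $W \times H$. This is where conditions (1) and (2) must be used essentially. My approach is to exploit that the identity component $T^0 \subset \mathrm{Stab}(x)$ is reductive and fixes $h$: replacing $g$ by $t \cdot g$ for $t \in T^0(K)$ leaves the $W$-coordinate's closed fiber unchanged (it remains $x$, since $T^0$ fixes $x$) while shifting the $H$-coordinate's closed fiber within its $T^0(k)$-orbit in $\bar H$. I would then choose a one-parameter subgroup of $T^0$ contracting $\bar h$ to a $T^0$-fixed point of $\bar H$, and apply the Hilbert-Mumford criterion to the semi-stable point $(x, h)$ to rule out this $T^0$-fixed limit lying in the boundary $\bar H \setminus H$. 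The delicate part is coordinating the equivariant compactification with the $T^0$-action so that semi-stability of $(x, h)$ forbids escape to the boundary; this is where reductivity of $T^0$ (ensuring good 1-PS behavior) really matters.

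Once $\bar h$ is placed inside $H$ with $(x, \bar h)$ semi-stable, the rest is immediate. By condition (3), $(s, h)$ is stable, so $G \cdot (s, h)$ is closed in the semi-stable locus of $W \times H$. Our $R$-family certifies $(x, \bar h) \in \overline{G \cdot (s, h)}$, and combined with semi-stability this forces $(x, \bar h) \in G \cdot (s, h)$, so $x = g_0 \cdot s$ for some $g_0 \in G$. Conjugating stabilizers gives $T = g_0 \, \mathrm{Stab}(s) \, g_0^{-1}$. Applying condition (3) to $h'' = g_0^{-1} h$ shows $\mathrm{Stab}(s) \cap \mathrm{Stab}(h'')$ is finite; since $\mathrm{Stab}(s)^0 = g_0^{-1} T^0 g_0$ fixes $h''$ by condition (2), the group $T^0$ must be trivial. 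In the intended applications $T^0$ contains a $\G_m$, so this is the desired contradiction, completing the proof.
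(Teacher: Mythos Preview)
Your overall architecture---degenerate in $W$, lift to $W\times H$, use closedness of stable orbits---is natural, but the step you flag as the ``hard part'' really is the crux, and your proposed resolution does not go through. Replacing $g$ by $t\cdot g$ with $t\in T^0(K)$ only moves the limit $\bar h$ within its $T^0(k)$-orbit (assuming $t$ extends over $R$); if instead you take $t$ to be a genuine $1$-PS of $T^0$, you are performing a second degeneration and lose control of the $W$-coordinate. Either way, the $T^0$-fixed point you eventually reach in $\bar H$ has no relation to the specific $h$ from condition~(2), so invoking Hilbert--Mumford for $(x,h)$ tells you nothing about it. In short, there is no mechanism here forcing the limit on the $H$-side to stay inside $H$, let alone to land at a semistable point.

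The paper sidesteps this difficulty entirely with a different idea. The key lemma (proved via an \'etale slice \`a la Luna/Alper) is that if $x\in\overline{G\cdot s}$, then one can choose the degenerating one-parameter subgroup $\lambda\colon\G_m\to G$ to land in the stabilizer $T$ of $x$ itself, with $x=\lim_{t\to 0}\lambda(t)\cdot s'$ for some $s'\in G\cdot s$. Since $\G_m$ is connected, $\lambda$ lands in $T_0$, and since $h$ is $T_0$-fixed, the product degeneration is simply $(\lambda(t)\cdot s',\,h)$ with limit exactly $(x,h)$. No compactification of $H$ or tracking of $g\cdot h$ is needed; the limit on the $H$-side is $h$ on the nose. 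From there, stability of $(s',h)$ forces $(x,h)$ to be unstable (its stabilizer contains $\lambda(\G_m)$, so it cannot be stable, and it is a non-orbit limit of a stable point), contradicting~(2). The missing ingredient in your approach is precisely this slice lemma.

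A minor point: your final contradiction relies on $T^0$ being nontrivial, which is not among the hypotheses. The paper's argument does not need this extra assumption to reach the stated conclusion (it shows $(x,h)$ is unstable directly, contradicting~(2)).
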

The proof needs the following lemma.
It is well-known to experts---it appears as a remark in \cite{alp.hal.ryd:20} (see \S~1.3, Immediate consequences: 5)---but we include a proof for completeness.
Recall that $\k$ is an algebraically closed field of characteristic 0, all schemes considered are of finite type over $\k$, and a point means a $\k$-point.
\begin{lemma}\label{prop:oneparam}
  Let \(U\) be a smooth scheme over $\k$ with an action of a linear algebraic group $G$.
  Let $x \in U$ be a point whose stabilizer group $T \subset G$ is reductive.
  If $x$ lies in the closure of the $G$-orbit of a point $s \in U$, then there exists a one-parameter subgroup $\lambda \colon \G_m \to T$ and a point $s'$ in the $G$-orbit of $s$ such that
  \[ x = \lim_{t \to 0} \lambda(t) \cdot s'.\]
\end{lemma}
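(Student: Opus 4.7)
The plan is to reduce the problem to a local model around $x$ in which $T$ acts linearly on a smooth affine scheme fixing $x$, and then apply the classical Hilbert--Mumford criterion to this torus/reductive action. The key input is a local structure theorem for the quotient stack $[U/G]$ near $x$.

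First, I would reduce to an étale-local model via the slice theorem. After shrinking $U$ to a $G$-invariant open subset in which $G \cdot x$ is closed (which is possible since $G \cdot x \subset U$ is locally closed), I would invoke the Alper--Hall--Rydh local structure theorem for stacks with linearly reductive stabilizers (or Luna's classical slice theorem if $G$ happens to be reductive). The hypothesis that $T$ is reductive, combined with our working in characteristic zero, guarantees $T$ is linearly reductive, so the theorem supplies a smooth affine $T$-scheme $N$ with a distinguished $T$-fixed point $0 \in N$, together with a $T$-equivariant étale map $(N,0) \to (U,x)$ whose associated map $G \times^{T} N \to U$ is étale and has image a $G$-invariant open neighborhood of $G \cdot x$.

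Second, I would transport the orbit closure condition to the slice. Since $x \in \overline{G \cdot s}$ and the slice neighborhood is $G$-saturated, some $G$-translate $s' = g \cdot s$ lies in the image of $G \times^T N \to U$; replacing $s'$ by an appropriate representative, we may assume $s' \in N$. Because the slice map is étale and $T$-equivariant, the containment $x \in \overline{G \cdot s'}$ forces $0 \in \overline{T \cdot s'}$ inside $N$. At this point the problem has been reduced to a statement about a reductive group $T$ acting on a smooth affine scheme $N$ with a $T$-fixed point $0$ in the closure of the $T$-orbit of some point $s' \in N$.

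Third, I would apply the Hilbert--Mumford criterion (or Kempf's theorem) to the $T$-action on $N$: whenever a $T$-fixed point lies in the closure of a $T$-orbit on an affine $T$-scheme, there is a one-parameter subgroup $\lambda \colon \G_m \to T$ realizing that degeneration. This produces $\lambda$ with $\lim_{t \to 0} \lambda(t) \cdot s' = 0$ in $N$. Pushing forward through the étale slice map yields $\lim_{t \to 0} \lambda(t) \cdot s' = x$ in $U$, as required. The principal obstacle --- and the point where the hypothesis that $T$ is reductive is indispensable --- is securing the étale slice in the first step, since without reductivity of $T$ there is no hope of a $T$-equivariant linearization around $x$, and the Hilbert--Mumford step would be unavailable. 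A minor technical annoyance is ensuring that the slice theorem applies (closedness of $G \cdot x$ in a suitable open), but this is handled by the preliminary shrinking.
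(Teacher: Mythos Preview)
Your proposal is correct and follows essentially the same approach as the paper: invoke an \'etale slice theorem to reduce the $G$-orbit closure problem on $U$ to a $T$-orbit closure problem on an affine slice, then apply the Hilbert--Mumford/Kempf criterion. The paper uses Alper's earlier and more elementary result \cite{alp:10} (producing an affine \'etale map $[W/T] \to [U/G]$) rather than the full Alper--Hall--Rydh theorem, and is somewhat more explicit about why the orbit-closure condition descends to the slice (via openness of the \'etale map and a fiber-product argument), but these are minor variations on the same strategy.
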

\begin{proof}
  By \cite[Theorem~3]{alp:10}, there exists a $T$-invariant locally closed affine $W \subset U$ containing the point $x$ such that the map
  \[ \pi \colon [W/T] \to [U / G]\]
  is affine and \'etale.
  (Note that this result is substantially easier than the main theorem of \cite{alp.hal.ryd:20}.)
  Define $Z$ as the fiber product
  \[
    \begin{tikzcd}
      Z \ar{r}\ar{d}& {[W/T]}\ar{d}{\pi}\\
      \spec \k \ar{r}{s}& {[U/G]}.
    \end{tikzcd}
  \]
  Since $\pi$ is representable, \'etale, and of finite type, $Z$ is a finite disjoint of copies of $\spec \k$.
  Let $z_1, \dots, z_n \in [W/T](\k)$ be the points of $Z$.
  Since the point $x \in [U/G](\k)$ lies in the closure of $s \in [U/G](\k)$ and the map $\pi$ is \'etale (and hence open), the point $x \in [W/T](\k)$ lies in the closure of the set $\{z_1,\dots, z_n\}$.
  But then $x$ lies in the closure of $z_i$ for some $i$.
  In other words, the point $x \in W(\k)$ lies in the closure of the $T$-orbit of a lift $s_i \in W(\k)$ of $z_i \in [W/T](\k)$.
  By the Hilbert--Mumford criterion \cite[Theorem~1.4]{kem:78}, there exists a one-parameter subgroup $\lambda \colon \G_m \to T$ such that we have the equation
  \begin{equation}\label{eqn:specialization}
    x = \lim_{t \to 0} \lambda (t) \cdot s_i.
  \end{equation}
  Since $s_i \in [W/T](\k)$ maps to $s \in [U/G](\k)$, the point $s_i \in W$ is in the same $G$-orbit as $s \in U$.
  The proof is now complete.
\end{proof}

\begin{proof}[Proof of \autoref{prop:gitproduct}]
  Let $T_0 \subset T$ be the connected  component of the identity.
  We prove the contrapositive.
  Assuming that $x$ lies in the closure of the $G$-orbit of $s$, we show that for any $T_0$-fixed $h \in H$, the point $(x, h) \in W \times H$ is unstable.

  By \autoref{prop:oneparam}, there exists a one parameter subgroup $\lambda \colon \G_m \to T_0$ and a point $s' \in W$ in the same $G$-orbit as $s$ such that
  \[ x = \lim_{t \to 0} \lambda(t) \cdot s'.\]
  For any $T_0$-fixed $h \in H$ we have
  \[
    (x, h) = \lim_{t \to 0} \lambda(t) \cdot (s', h).
  \]
  Since $(s',h)$ is $G$-stable, $(x,h)$ must be $G$-unstable.
\end{proof}

We immediately get an application to cubic surfaces.
\begin{proposition}\label{prop:isogood}
  The following singular cubic surfaces are not in the closure of the $\PGL_4$-orbit of any smooth cubic surface without an Eckardt point (the parenthesis describes the singularities):
  \begin{enumerate}
  \item $x_0x_1x_3 = x_2^3$ \quad $(3A_2)$
  \item $x_3(x_0x_2-x_1^2) = x_0x_1^2$ \quad $(A_3 + 2A_1)$
  \item $x_3(x_0x_2-x_1^2) = x_0^2x_1$ \quad $(A_4 + A_1)$
  \item $x_3x_0^2 = x_1^3 + x_2^3$ \quad $(D_4)$
  \end{enumerate}
\end{proposition}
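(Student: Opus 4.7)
The strategy is to apply \autoref{prop:gitproduct} with $G = \PGL V$ acting on $W = \P \Sym^3 V^\vee$, taking $x$ to be the singular cubic $X_i$ in question and $s$ to be a smooth cubic $S$ without Eckardt point.

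For the $3A_2$ cubic $X_1$ one can take $H = \Spec \k$ and $h$ trivial. By Mumford's classification, a cubic surface with only $A_1$ and $A_2$ singularities is GIT semistable, so $X_1$ is semistable; its $\PGL V$-stabilizer contains the two-dimensional torus $\{\mathrm{diag}(a,b,1,(ab)^{-1}) : a,b \in \G_m\}$ and in fact is $T \rtimes S_3$ (with the $S_3$ permuting the three $A_2$ points $[1:0:0:0]$, $[0:1:0:0]$, $[0:0:0:1]$), which is reductive; and any smooth cubic is GIT stable. Thus all three hypotheses of \autoref{prop:gitproduct} are satisfied at once, and $X_1$ cannot lie in the orbit closure of $S$.

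For each of the remaining cubics, which are unstable, I intend to take $H = \P V$ equipped with the standard $\PGL V$-action and linearization $\O(1)$, and to use the product linearization $\O(a) \boxtimes \O(b)$ on $W \times H$ for suitable positive integers $a, b$. For each $X_i$ with $i = 2, 3, 4$, set $h \in H$ to be the coordinate point corresponding to the worst singular point (the $A_3$, $A_4$, and $D_4$ point, respectively). A direct inspection of the equations shows that $h$ is fixed by the $\G_m$-stabilizer $T_0$ of $X_i$ exhibited in the preceding isotrivial families. With this setup, the task reduces to verifying:
\begin{enumerate}
\item[(i)] for some positive integers $a, b$, the pair $(X_i, h)$ is semistable in $(W \times H, \O(a) \boxtimes \O(b))$;
\item[(ii)] for the same $a, b$, the pair $(S, h')$ is stable for every smooth $S$ without Eckardt point and every $h' \in \P V$.
\end{enumerate}

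Step (i) is a Hilbert--Mumford calculation: for a 1-PS with diagonal weights $(w_0,w_1,w_2,w_3)$ summing to zero, the weight of $(X_i, h)$ splits as $a \cdot \mu_\lambda(X_i) + b \cdot \mu_\lambda(h)$; since $h$ is a coordinate point, $\mu_\lambda(h)$ is (up to sign) one of the $w_j$, while $\mu_\lambda(X_i)$ is read off the finite monomial content of $X_i$, so semistability reduces to a finite combinatorial check. Step (ii) follows from stability of $S$ alone: the function $\lambda \mapsto \mu_\lambda(S)$ is strictly positive on normalized 1-PS and bounded below on the compact space of directions, so taking $b/a$ small enough ensures that the contribution $b \cdot \mu_\lambda(h')$ cannot cancel it. The main obstacle is tuning the ratio $b/a$ case by case so that (i) and (ii) hold simultaneously; the $D_4$ cubic will be the most delicate, since its destabilizing one-parameter subgroup has the relatively large weights $(5,1,1,-7)$ of the associated test family, forcing a careful numerical choice of $(a, b)$.
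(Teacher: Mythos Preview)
Your approach via \autoref{prop:gitproduct} is exactly the paper's. For the $3A_2$ surface your shortcut with $H=\Spec\k$ is fine and slightly cleaner than the paper's uniform treatment, since this surface is already GIT semistable. For the remaining three, the paper also pairs the cubic with an auxiliary projective space, but it takes $H=\P V^\vee$ (hyperplanes) rather than your $H=\P V$ (points), and it cites the variation-of-GIT analysis of Gallardo and Martinez-Garcia together with Sakamaki's classification of automorphism groups rather than computing from scratch.

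There are two genuine gaps in your outline. First, \autoref{prop:gitproduct} requires the \emph{full} stabilizer of $x$ to be reductive, not merely that the identity component be a torus; you assert this for $X_1$ but say nothing for $X_2,X_3,X_4$. The paper obtains reductivity from Sakamaki's tables, which give $\G_m\rtimes\Sigma_2$, $\G_m$, and $\G_m\rtimes\Sigma_3$ respectively. Second, and more seriously, your argument for (ii) only produces, for each fixed $S$, a threshold $t(S)$ below which $(S,h')$ is $t$-stable for all $h'$. But since $X_2,X_3,X_4$ are themselves unstable, the pair $(X_i,h)$ becomes unstable as $t=b/a\to 0$; so (i) forces $t\ge c_i$ for some $c_i>0$. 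You therefore need $t(S)>c_i$ for \emph{every} smooth $S$ without an Eckardt point, which is a uniform statement your compactness argument does not supply: the stability margin of $S$ can tend to zero as $S$ approaches the strictly semistable locus. This is precisely the content of a wall-crossing analysis. The paper gets it from Gallardo--Martinez-Garcia, who show that for every $t\in(0,5/9)$ and every smooth cubic $S$ without Eckardt points, every pair $(S,h')$ is $t$-stable; the paper then exhibits specific torus-fixed hyperplanes $h$ (namely $x_2=0$ or $x_0=0$) and values $t=1/5,\,1/3,\,3/7\in(0,5/9)$ making $(X_i,h)$ semistable. If you pursue $H=\P V$ you will have to carry out the analogous VGIT for pairs (cubic, point) yourself.
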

\begin{proof}
  Let $V$ be a 4-dimensional vector space.
  Set $W = \P \Sym^3 V^\vee$ and $H = \P V^\vee$; both have natural linearized actions of $\PGL(V)$.
  The product $W \times H$ parametrizes cubic surfaces along with a hyperplane.
  We use the GIT analysis for this space carried out by Gallardo and Martinez-Garcia \cite{gal.mar:19}.
  We have a $1$-parameter choice of linearizations on $W \times V$ indexed by a positive rational number $t$.
  The $G$-linearized line bundle corresponding to $t$ is $\O(a) \boxtimes \O(b)$, where $a$ and $b$ are (sufficiently divisible) positive integers satisfying $a/b = t$.

  From \cite[Theorem~3]{sak:10}, we see that the listed cubic surfaces have a reductive stabilizer group.
  In fact, in all cases the connected component of the identity is a torus.

  Let $S$ be a smooth cubic surface without an Eckardt point corresponding to a point $s \in W$.
  Suppose we have $0 < t < 5/9$.
  From \cite[Theorem~2]{gal.mar:19}, we see  that for any $h \in H$, the point $(s,h)$ is $t$-stable.
  On the other hand, from \cite[Table 2]{gal.mar:19}, we see that for some $t$ in the range $0 < t < 5/9$, there exists a choice of $h \in H$, fixed by the connected component of the identity of the stabilizer of $s$, such that $(s,h)$ is $t$-semistable.
  We now apply \autoref{prop:gitproduct}.

  In the following table, we list the automorphism groups, the hyperplane $h$, and the value of $t$ for which $(s,h)$ is $t$-semistable.
  We denote the symmetric group on $n$ letters by $\Sigma_n$.
  The automorphism groups are from \cite{sak:10} and the GIT semi-stability assertions are from \cite{gal.mar:19}.
  \begin{center}
  \begin{tabular}{l l l l l}
    \toprule
    $X$ & Singularities of $X$& $\Aut(X)$ & $h$ & $t$\\
    \midrule
    $x_0x_1x_3 = x_2^3$& $3A_2$&$\G_m^2 \rtimes \Sigma_3$ & $x_2 = 0$ & All $t \in (0,1)$\\
    $x_3(x_0x_2-x_1^2) = x_0x_1^2$& $A_3+2A_1$&$\G_m \rtimes \Sigma_2$ & $x_2 = 0$ & $1/5$ \\
    $x_3(x_0x_2-x_1^2) = x_0^2x_1$&$A_4+A_1$& $\G_m$ & $x_2 = 0$ & $1/3$\\
    $x_3x_0^2 = x_1^3 + x_2^3$&$D_4$& $\G_m \rtimes \Sigma_3$ & $x_0 = 0$ & $3/7$\\
    \bottomrule
  \end{tabular}
\end{center}
\end{proof}

\subsection{Using 6 points in $\P^2$}
The second technique is specific to cubic surfaces.
We first reduce to the case of 6 points in $\P^2$, and then use the geometry of 1-parameter families of automorphisms of $\P^2$.
This method can show, for example, that the cubic with the $A_5$ singularity does not lie in the closure of the orbit of a generic cubic.
(This particular surface had been immune to all our previous strategies.)

\begin{proposition}\label{prop:cubictopoints}
  Let $W \subset \P^2$ be an admissible length 6 subscheme and let $X = X_W$ be the cubic surface associated to $W$.
  Let $S \subset \P^2$ be a smooth cubic surface, and suppose $X$ lies in the $\PGL_4$-orbit closure of $S$.
  Then $W$ lies in the $\PGL_3$ orbit closure of a length 6 scheme $Z \subset \P^2$ such that $\Bl_Z \P^2$ is isomorphic to $S$.
\end{proposition}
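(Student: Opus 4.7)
The plan is to transport a blow-down presentation of $S$ to $\P^2$ through a one-parameter degeneration of $S$ to $X$, and extract a specialization of six-tuples in $\P^2$. First, since $X \in \overline{\PGL_4 \cdot S}$, after a finite base change I assemble a flat family $\pi \colon \mathcal X \to C$ over a smooth pointed curve $(C,0)$ with $\mathcal X_0 \cong X$ and $\mathcal X_c \cong S$ for $c \neq 0$ near $0$. Because $W$ is admissible, $X_W$ has only ADE singularities by \autoref{prop:admissibleblowup}, so Brieskorn's simultaneous resolution theorem (after a further finite base change ramified at $0$) produces a smooth family $\widetilde{\mathcal X} \to C$ whose special fiber $\widetilde X$ is the minimal resolution of $X_W$ and whose other fibers are $\cong S$.

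On $\widetilde X$ the blow-up morphism $\widetilde X \to \P^2$ determines six classes $e_1,\dots,e_6 \in \Pic(\widetilde X)$ with $e_i \cdot e_j = -\delta_{ij}$ and $e_i \cdot K_{\widetilde X} = -1$, represented by the total exceptional divisors over the points of $\mathrm{supp}\,W$ (in general reducible chains of a $(-1)$-curve and some $(-2)$-curves). Since the fibers are smooth rational surfaces with $h^2(\O) = 0$, the relative Picard scheme is \'etale over $C$, so each $e_i$ extends uniquely to nearby fibers, where on a smooth cubic it is represented by a unique line. These classes extend to $\widetilde{\mathcal X}$ as $C$-flat families of effective divisors $\mathcal E_i$, and the simultaneous relative contraction produces a smooth $\P^2$-bundle $\mathcal P \to C$. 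Trivializing $\mathcal P \cong \P^2 \times C$ (after shrinking), the contracted divisors form a family $c \mapsto Z_c \subset \P^2$ of length-$6$ subschemes with $Z_0 = W$ and $\Bl_{Z_c}\P^2 \cong S$ for $c \neq 0$.

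Finally I check that $\{Z_c : c \neq 0\}$ lies in a single $\PGL_3$-orbit. Over any connected \'etale neighborhood $U$ of a point in $C \setminus \{0\}$, trivialize $\widetilde{\mathcal X}|_U \cong S \times U$; since $S$ has only finitely many lines, each $\mathcal E_i|_U$ must be constant at some $\ell_i \subset S$. Contracting the fixed six $(\ell_1,\dots,\ell_6)$ in $S \times U$ gives a canonical trivialization of $\mathcal P|_U$ for which $Z_c$ is constant at some $Z \in (\P^2)^6$ with $\Bl_Z\P^2 \cong S$. My trivialization of $\mathcal P$ over $C$ differs from this canonical one over $U$ by a $\PGL_3$-gauge $\phi \colon U \to \PGL_3$, so $Z_c = \phi(c)\cdot Z$ for all $c \in U$; taking $c \to 0$ yields $W \in \overline{\PGL_3 \cdot Z}$.

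I expect the main technical obstacle to be in the middle paragraph: verifying that the Picard extensions of $e_i$ are represented by effective divisors on $\widetilde{\mathcal X}$ whose special fibers recover the correct exceptional trees of $\widetilde X \to \P^2$, especially in the case where $W$ is non-reduced and the specialized $(-1)$-curves on $S$ break up into $(-1) + (-2)$-chains. The Brieskorn base change in Step 1 is standard and does not affect the orbit-closure conclusion, and the gauge-change argument in Step 3 is formal; once the divisors $\mathcal E_i$ are constructed with the right special fibers, the proof is complete.
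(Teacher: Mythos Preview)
Your outline shares the same opening moves as the paper's proof: pass to a one-parameter family over a DVR (the paper) or a pointed curve (you), invoke Brieskorn--Tjurina simultaneous resolution to replace the family by a smooth family $\widetilde{\mathcal X}$ whose special fiber is the minimal resolution $\widetilde X$ of $X_W$, and then transport a blow-down structure across the family using that $h^2(\O)=0$ kills obstructions in the relative Picard scheme. The final gauge/constancy argument is also essentially the same as the paper's.

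The genuine difference is in the middle step, and the paper's choice neatly dissolves the obstacle you flagged. Rather than extending the six exceptional classes $e_1,\dots,e_6$ and attempting a simultaneous relative contraction, the paper extends the single line bundle $L' = $ pullback of $\O_{\P^2}(1)$ along $\widetilde X \to \P^2$. One checks $h^0(L')=3$ and $h^i(L')=0$ for $i>0$; then $\mathcal L$ extends $L'$ (via unobstructedness and Artin approximation), $\pi'_*\mathcal L$ is free of rank $3$, and the evaluation map is surjective, giving directly a morphism $\alpha\colon \widetilde{\mathcal X}\to \P^2_R$. The family $\mathcal Z\subset \P^2_R$ is then read off as the cokernel of the natural map $\pi'_*(\omega^{-1}_{\widetilde{\mathcal X}/R})\otimes \omega_{\P^2_R/R} \to \O_{\P^2_R}$, and flatness follows by comparing lengths on the two fibers. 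In this formulation there is nothing to check about effective representatives of $e_i$ or about contracting reducible $(-1)+(-2)$ chains in families: the map to $\P^2$ is produced by a linear system, and the scheme $\mathcal Z$ by a sheaf-theoretic construction.

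Your route can almost certainly be completed, but the step ``these classes extend as $C$-flat families of effective divisors $\mathcal E_i$ whose special fibers are the correct exceptional trees, and the simultaneous contraction yields a smooth $\P^2$-bundle'' is real work when $W$ is non-reduced, exactly as you anticipated. If you want to finish along your lines, the cleanest fix is to observe that $H := -K - (e_1+\cdots+e_6)/\text{(appropriate expression)}$ is not the right move; instead just note that your six extended classes determine the class $H = $ (the unique class with $H^2=1$, $H\cdot K=-3$ orthogonal to the $(-2)$-curves), and then switch to the linear system $|H|$ as the paper does. That replaces the contraction step by a base-point-freeness check, which is straightforward.
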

\begin{proof}
  Since $W$ is in the orbit closure of $S$, there exists a $\k$-algebra DVR $R$ with fraction field $K$ and residue field $k$ with a family of cubic surfaces $\pi \colon \mathcal X \to \spec R$ whose central fiber $\mathcal X_0$ is isomorphic to $X$ and whose general fiber $\mathcal X_K$ is isomorphic to $S_K = S \times_k K$.
  We know that $X$ has only ADE singularities (\autoref{prop:admissibleblowup}) and hence we can resolve the singularities of $X$ in the family.
  That is, there exists a smooth and projective $\pi' \colon \mathcal X' \to \spec R$ and $\beta \colon \mathcal X' \to \mathcal X$ such that $\beta_K$ is an isomorphism and $\beta_0$ is the minimal resolution of singularities; see \cite{bri:70} or \cite{tju:70}.

  Set $X' = \mathcal X'_0$.
  We know from the proof of \autoref{prop:good} that $X'$ is also the minimal desingularization of $\Bl_Z\P^2$.
  Let $X' \to \P^2$ be the composite $X' \to \Bl_Z \P^2 \to \P^2$, and let $L'$ be pullback of $\O(1)$ to $\mathcal X'_0$.
  It is easy to see that $h^i(X', L') = 0$ for $i > 0$ and $h^0(X', L') = 3$.
  Since $H^2(X', \O_{X'}) = 0$, there are no obstructions to deforming $L'$---it extends to compatible family of line bundles on $\mathcal X' \times_R \spec R/t^n$ for every $n$, where $t \in R$ is the uniformizer. 
  By Artin's approximation theorem \cite{art:69} applied to the relative Picard functor of $\pi'$, we deduce that $L'$ extends to a line bundle $\mathcal L$ on $\mathcal X'$, after possibly replacing $R$ by a finite \'etale cover.
  By cohomology and base change, we see that $\pi'_* \mathcal L$ is a free $R$-module of rank 3.
  It is also easy to see that the evaluation map $\pi'^*\pi'_* \mathcal L \to \mathcal L$ is surjective.
  After choosing an isomorphism $\pi'_* \mathcal L \cong R^3$, we get a map $\alpha \colon \mathcal X' \to \P^2_R$, which restricts to the original map $X' \to \P^2$ on the central fiber.
  
  Recall that $\mathcal X'_K = \mathcal X_K = S \times_k K$.
  Since $H^1(S, \O_S) = 0$, it follows that the line bundle $\mathcal L$ on $S \times_k K$ is constant, that is, isomorphic to $L \otimes_k K$ for some line bundle $L$ on $S$.
  The complete linear system $|L|$ gives a map $a \colon S \to \P^2$.
  It is easy to check that the map $\alpha \colon \mathcal X'_K \to \P^2_K$ and $a_K \colon \mathcal X'_K \to \P^2_K$ are equal, up to an automorphism of $\P^2_K$.
  
  The natural map $\alpha^* \omega_{\P^2_R/R} \to \omega_{\mathcal X'/R}$ gives a map $\omega^{-1}_{\mathcal X'/R} \to \alpha^* \omega^{-1}_{\P^2_R/R}$ and hence an evaluation map
  \[ \pi'_* \left(\omega_{\mathcal X'/R}\right) \otimes_R \O_{\P^2_R} \to \omega_{\P^2_R/R}^{-1}.\]
  Tensor by $\omega_{\P^2_R/R}$ to get
  \[ \pi'_* \left(\omega_{\mathcal X'/R}\right) \otimes_R \omega_{\P^2_R/R} \to \O_{\P^2_R}.\]
  Then the cokernel is $\O_{\mathcal Z}$ for a closed subscheme $\mathcal Z \subset \P^2_R$.
  Note that both the general and the special fiber of $\mathcal Z \to \spec R$ have length 6, and hence $\mathcal Z \to \spec R$ is flat.
  The central fiber is the subscheme $W \subset \P^2$ and, up to an automorphism of $\P^2_K$, the general fiber is $Z \times_k K \subset \P^2_K$, where $Z \subset \P^2$ is such that $S \cong \Bl_Z \P^2$.
  The proof is now complete.
\end{proof}

\subsubsection{The anatomy of linear rational maps $\P_t^2 \dashrightarrow \P_t^2$}\label{sec:ratmap}
To study orbit closures of collections of points in $\P^2$, we must understand one parameter families of automorphisms of $\P^2$.

Let $R$ be a $\k$-algebra DVR with residue field $\k$, fraction field $K$, and uniformizer $t$.
For an $R$-module $M$, we set $M_K = M \otimes_R K$ and $M_0 = M \otimes_R \k$.
Let $\phi \colon \P^2_K \to \P^2_K$ be an isomorphism.
We analyze the corresponding rational map $\P^2_R \dashrightarrow \P^2_R$.
The analysis should hold for $\P^n$ for any $n$; we stick to $n = 2$ to keep us focused.

Let $M$ and $N$ be free $R$-modules of rank 3, and suppose $\phi \colon M_K \to N_K$ is a $K$-linear isomorphism.
By scaling $\phi$ by the correct power of the uniformizer, assume that $\phi$ induces a map $\phi \colon M \to N$ such that $\phi_0 \colon M_0 \to N_0$ is non-zero.
By the \emph{rank} of $\phi$, we mean the rank of $\phi_0$.

By making a change of basis on $M$ and $N$, we may assume that $\phi$ is given by a diagonal matrix
\[
  \phi \equiv
  \begin{pmatrix}
    1 & & \\
     & t^a & \\
    & & t^b
  \end{pmatrix},
\]
where $0 \leq a \leq b$ are integers.

If $a = b = 0$, then there is nothing to analyze.

Suppose $a = 0$ and $b > 0$, or equivalently, $\rk \phi_0 = 2$.
In this case, the rational map $\phi$ is defined away from the point $[0:0:1]$ in $\P M_0$.
It sends the point $[a:b:c]$ to the point $[a:b:0]$, which lies on the line $Z = 0$.
Said in a coordinate free manner, the rational map $\phi$ is defined away from a particular point $p \in \P M_0$.
For $q \in \P M_0 \setminus \{0\}$, the image $\phi(q)$ lies on a particular line $L \subset \P N_0$, and the map
\begin{equation}\label{eqn:rk2}
  \phi \colon \P M_0 \setminus \{p\} \to L
\end{equation}
is the linear projection from $p$ followed by an isomorphism.

Suppose $a > 0$, or equivalently $\rk \phi_0 = 1$.
In this case, the rational map $\phi$ is defined away from the line $X = 0$ in $\P M_0$.
It sends the point $[a:b:c]$ to the point $[1:0:0]$.
Said in a coordinate free manner, the rational map $\phi$ is defined away from a particular line $L \subset \P M_0$ and the map
\begin{equation}\label{eqn:rk1}
  \phi \colon \P M_0 \setminus L \to \P N_0
\end{equation}
is constant.

Using the matrix of $\phi$, we can explicitly construct a resolution of the rational map $\phi \colon \P^2_R \dashrightarrow \P^2_R$ using a sequence of elementary transformations.
We get the description of the maps \eqref{eqn:rk2} and \eqref{eqn:rk1} also from this resolution.
Let $L \subset \P^2$ be the locus of indeterminacy of $\phi$ (with the reduced scheme structure).
Let $\widetilde P \to \P^2_R$ be the blow up of $L$.
Then the central fiber of $\widetilde P$ is the union of $\Bl_L\P^2$ and the exceptional divisor $E$.
If $\phi$ has rank 1, then $L$ is a line, and these two components are $\P^2$ and the Hirzebruch surface $\F_1$, respectively, meeting along $L \subset \P^2$ and the unique $(-1)$ curve on $\F_1$.
If $\phi$ has rank 2, then $L$ is a point, and these two components are $\F_1$ and $\P^2$, respectively, meeting along the $(-1)$-curve on $\F_1$ and a line in $\P^2$.
In both cases, the first component can be contracted: to a point in the rank 1 case and to a line in the rank 2 case.
The resulting threefold is again isomorphic to $\P^2_R$.
The rational map $\P^2_R \dashrightarrow \P^2_R$ after this operation corresponds to the diagonal matrix with entries $(1,t^{a-1}, t^{b-1})$ in the rank 1 case and $(1, 1, t^{b-1})$ in the rank 2 case.
Observe that the new locus of indeterminacy is disjoint from the image of the contracted component.
We repeat the procedure until we reach the identity matrix.
Alternatively, we can do all the blow-ups first, and then all the contractions.
We then get a diagram
\[
\begin{tikzpicture}[node distance=5em]
  \node (t) {$\widehat P$};
  \node (l) [below left of=t] {$\P^2_R$};
  \node (r) [below right of=t] {$\P^2_R$};
  \path [->] (l) edge[dashed]  node[above] {$\phi$}  (r) (t) edge (l) (t) edge (r);
\end{tikzpicture}
\]
The central fiber of $\widehat P$ has an accordion like structure $P_0 \cup \dots \cup P_n$ (see \autoref{fig:accordion}), where each $P_i$ is a copy of $\F_1$ for $i = 1, \dots, n-1$, meeting the previous surface along the $(-1)$-curve and the next one along a $(+1)$-curve.
The first surface $P_0$ is $\F_1$ if $a = 0$ (that is, $\phi$ has rank 2) or $\P^2$ if $a > 0$ (that is, $\phi$ has rank 1).
Likewise, the last surface $P_n$ is $\F_1$ if $a = b$ or $\P^2$ if $b > a$.

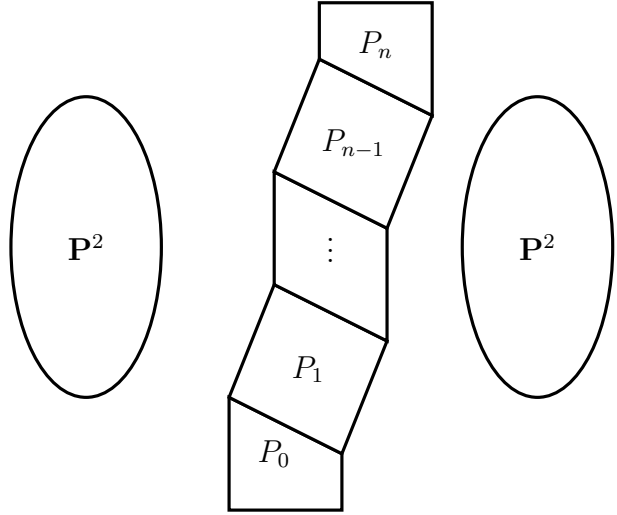
\begin{figure}
  \centering
  \begin{tikzpicture}[very thick]
  \begin{scope}[xshift=-3cm]
    \draw (0,0) ellipse (1 and 2);
    \draw (0,0) node {$\P^2$};
  \end{scope}
  \begin{scope}[yshift=-2cm, xshift=-0.5cm, yscale=0.75, xscale=1.5]
    \draw[fill=white] (-0.4,-2.0) -- (0.6,-2.0) -- (0.6, -1) -- (-0.4,0) -- cycle;
    \draw[fill=white] (0.6, -1) -- (-0.4,0) -- (0, 2) -- (1,1) -- cycle;
    \draw[fill=white] (0, 2) -- (1,1) -- (1,3) -- (0,4) -- cycle;
    \draw[fill=white] (1, 3) -- (0,4) -- (0.4, 6) -- (1.4,5) -- cycle;
    \draw[fill=white] (0.4, 6) -- (1.4,5) -- (1.4,7) -- (0.4,7) -- cycle;
    \draw
    (0,-1.0) node {$P_0$}
    (0.3,0.5) node {$P_1$}
    (0.5, 2.75) node {$\vdots$}
    (0.7, 4.5) node {$P_{n-1}$}
    (0.9, 6.25) node {$P_{n}$};
    \draw (0,-2) node (X) {};
  \end{scope}
  \begin{scope}[xshift=+3cm]
    \draw (0,0) ellipse (1 and 2);
    \draw (0,0) node {$\P^2$};
  \end{scope}
\end{tikzpicture}
  \caption{A family of surfaces with an accordion like central fiber arises while resolving a linear rational map $\P^2_R \dashrightarrow \P^2_R$.}
  \label{fig:accordion}
\end{figure}

\subsubsection{Orbit closure of 6 general points in $\P^2$}\label{sec:orb6}
We use the analysis in \autoref{sec:ratmap} to understand limits of a general set of six points in $\P^2$ under a one-parameter family of automorphisms.
Let $Z \subset \P^2$ be a configuration of 6 distinct points, with no three collinear.
A point $p \in \P^2 \setminus Z$ is a \emph{star point} if, for some ordering $\{z_1, \dots, z_6\}$ of $Z$, the lines $\langle z_1, z_2 \rangle$ and $\langle  z_3, z_4 \rangle$ and $\langle  z_5, z_6 \rangle$ are concurrent at $p$.
A \emph{star-free} configuration is one without any star points.

\begin{proposition}\label{prop:limit4}
  Let $Z \subset \P^2$ be a set of 6 points forming a star-free configuration.
  Suppose $W \subset \P^2$ is in the $\PGL_3$ orbit closure of $Z$.
  Then there exists a line in $\P^2$ containing at least 4 distinct points of $W$ or there exists a point in $\P^2$ at which $W$ has multiplicity at least 4.
\end{proposition}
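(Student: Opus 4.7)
The strategy is to exploit the structural description of one parameter families of linear rational self-maps of $\P^2$ developed in \autoref{sec:ratmap}. I first observe that the conclusion is only meaningful when $W$ is a genuine degeneration of $Z$, so I assume $W$ is not in the $\PGL_3$ orbit of $Z$. The orbit closure assumption then produces a DVR $R$, with fraction field $K$ and residue field $\k$, and a morphism $g \colon \spec R \to \End(\k^3) \setminus \{0\}$ that is invertible over $K$ but has $g_0$ of rank $r \in \{1,2\}$. The scheme $W$ is the flat limit in $\Hilb^6(\P^2)$ of the six constant sections $z_1, \dots, z_6$ of source $\P^2_R$ under the rational map $\phi \colon \P^2_R \dashrightarrow \P^2_R$ induced by $g$.

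Using the accordion resolution $\widehat P$ of \autoref{sec:ratmap}, each $z_i$ lifts uniquely to a section $\widetilde z_i \colon \spec R \to \widehat P$, and composing with the regular morphism $\widehat P \to \P^2_R$ to the target yields a section $\phi(z_i)$ of target $\P^2_R \to \spec R$. Since the six sections are disjoint on the generic fiber, a local calculation shows that $W = \sum_{i=1}^6 \phi(z_i)(0)$ as a length $6$ subscheme of $\P^2$, with length counted by coincidence. The proof then splits according to the rank of $g_0$, since the component $P_0$ of the accordion closest to source is either a $\P^2$ contracted to a point in the target (when $r = 1$) or an $\F_1$ mapping by linear projection onto a line in the target (when $r = 2$).

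If $r = 1$, let $p$ denote the image of $P_0$ and $L$ the indeterminacy line of $\phi_0$. The no three collinear hypothesis (implicit in star freeness) gives $\lvert Z \cap L \rvert \leq 2$, so at least four of the $z_i$ lift to the interior of $P_0$ and hence map to $p$, giving $\mult_p W \geq 4$. If $r = 2$, let $p_0$ denote the indeterminacy point and $m$ the projection line in target. At most one $z_i$ equals $p_0$, so at least five of the central fiber images lie on $m$. Star freeness is what I need to upgrade this to \emph{distinct} points: a coincidence $\phi(z_i)(0) = \phi(z_j)(0)$ amounts to $p_0, z_i, z_j$ being collinear. When $p_0 = z_i \in Z$, any such coincidence among the remaining five would force three collinear points of $Z$, so all five projected points are distinct; when $p_0 \notin Z$, three disjoint coincidence pairs would exhibit $p_0$ as a star point of $Z$, contradicting star freeness, so at most two coincidences occur and at least $6 - 2 = 4$ distinct points of $W$ lie on $m$.

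The step I expect to be the main obstacle is the precise scheme theoretic bookkeeping, particularly the claim that the flat limit of the six sections decomposes cleanly as a sum of limit points with multiplicities given by coincidences. This is elementary in local coordinates but requires care when a section $z_i$ meets the indeterminacy locus and the lift $\widetilde z_i$ lands in an interior accordion component $P_j$ rather than in $P_0$. For the rank $1$ conclusion this is harmless, since I only need a lower bound on the count at $p$; for the rank $2$ conclusion the relevant fact is that the last accordion component (reached only when $p_0 = z_i \in Z$) maps isomorphically to the target, so the exceptional section contributes just one additional point of $W$ and cannot interfere with the collinearity count on $m$.
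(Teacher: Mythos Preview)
Your proof is correct and follows essentially the same approach as the paper: split by the rank of $\phi_0$, use the no-three-collinear condition in the rank $1$ case to force a point of multiplicity $\geq 4$, and use star-freeness in the rank $2$ case to bound the number of coincidences under linear projection, yielding $\geq 4$ distinct points on a line.

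One simplification worth noting: the paper avoids your scheme-theoretic bookkeeping worries entirely by working at the level of cycles. Since the conclusion of the proposition concerns only multiplicities and distinctness of points, it suffices to compute $[W] = \sum_{z \in Z} \phi(z)_0$ as a $0$-cycle, and for this one needs only the elementary descriptions \eqref{eqn:rk2} and \eqref{eqn:rk1} of $\phi_0$ on the central fiber, not the full accordion resolution $\widehat P$. In particular, when a point $z_i$ sits at the indeterminacy locus, the paper simply discards it from the count (the remaining five points already give what is needed), rather than tracking its lift through interior components.
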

\begin{proof}
  There exists a $\k$-algebra DVR $R$ with fraction field $K$ and $\phi \in \Aut(\P^2_K)$ such that $W$ is the flat limit of $\phi(Z_K)$.
  Consider the rational map
  \[ \phi \colon \P^2_R \dashrightarrow \P^2_R.\]
  We use $\phi$ to understand $W$ as a cycle.
  For $z \in Z$, let $z_K$ denote the constant section $z \times_k K$ of $\P^2_K$, and let $\phi(z)_0$ be the flat limit of $\phi(z_K)$.
  Then the cycle $[W]$ of $W$ is simply
  \[ [W] = \sum_{z \in Z} \phi(z)_0.\]

  Suppose $\phi$ has rank 2.
  Then the indeterminacy locus of $\phi$ consists of a single point $p$ in the central fiber, and the map $\phi_0 \colon \P^2 \dashrightarrow \P^2$
  is the linear projection with center $p$ onto a line $L \subset \P^2$ (see \eqref{eqn:rk2}).
  If $p \not \in Z$, then  $\phi_0$ is defined for every $z \in Z$, and $\phi(z)_0$ is simply $\phi_0(z)$.
  Since $Z$ is star-free, the linear projection from $p$ can identify at most 2 pairs of points from $Z$.
  As a result, the set $\{\phi_0(z) \mid z \in Z\}$ consists of at least 4 distinct points on $L$.
  If $p \in Z$, consider $Z' = Z \setminus \{p\}$.
  Then $\phi_0$ is defined on $Z'$, and $\phi(z)_0$ is simply $\phi_0(z)$ for $z \in Z'$.
  Since no 3 points of $Z$ are collinear, the linear projection from $p$ maps the points of $Z'$ to distinct points.
  As a result, the set $\{\phi_0(z) \mid z \in Z\}$ actually consists of 5 distinct points on $L$.

  Suppose $\phi$ has rank 1.
  Then the indeterminacy locus of $\phi_0$ is a line $L \subset \P^2$ and $\phi_0$ is constant on $\P^2 \setminus L$.
  Let $Z' = Z \setminus L$.
  Since no three points of $Z$ are collinear, $L$ contains at most 2 points of $Z$, and hence $Z'$ contains at least 4 points.
  By construction, $\phi_0$ is defined on $Z'$, and hence $\phi(z)_0 = \phi_0(z)$ for $z \in Z'$.
  Since $\phi_0$ is constant, we conclude that $W$ contains a point of multiplicity at least 4.
\end{proof}

\begin{proposition}\label{prop:pq4r}
  Let $W \subset \P^2$ be a curvilinear scheme of length 6 whose cycle is $p + q + 4r$, where $p,q,r$ are distinct and $T_rW$ does not contain $p$ or $q$.
  Then $W$ does not lie in the $\PGL_3$ orbit closure of a star-free configuration $Z$.
\end{proposition}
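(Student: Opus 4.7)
The plan is to argue by contradiction. Suppose $W$ lies in the $\PGL_3$-orbit closure of a star-free configuration $Z$; then there exist a DVR $R$ with fraction field $K$ and residue field $\k$, together with $\phi \in \PGL_3(K)$, such that $W$ is the flat limit of $\phi_t(Z)$ in $\P^2_R$. Applying Smith normal form to a lift of $\phi$ to $\GL_3(K)$ and absorbing the resulting source and target changes of coordinates (which preserve star-freeness of $Z$ and the scheme-theoretic type of $W$) reduces us to $\phi_t = \operatorname{diag}(1, t^a, t^b)$ with integers $0 \leq a \leq b$.

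I then split into rank cases, following the analysis in \S\ref{sec:ratmap}. In the rank $2$ case ($a=0<b$), the argument of \autoref{prop:limit4} shows that star-freeness forces every closed point of the flat limit to have multiplicity at most $2$, incompatible with the $4r$ component of $W$. In the rank $1$ case ($a \geq 1$), the indeterminacy locus is the line $L = \{X = 0\}$, and the no-three-collinear condition on $Z$ gives $|Z \cap L| \leq 2$; to produce the cycle $p + q + 4r$ we must have exactly four points $Z'' \subset Z\setminus L$, all specializing to $r = [1{:}0{:}0]$, and two points in $Z \cap L$ specializing to distinct limits $\{p, q\}$. A direct calculation of the $\phi$-action on $L$ shows that the only possible limits of points of $L$ are $[0{:}1{:}0]$ and $[0{:}0{:}1]$, so $\{p, q\} = \{[0{:}1{:}0], [0{:}0{:}1]\}$.

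The technical heart of the proof — and the step I expect to be the main obstacle — is to pin down the tangent direction at $r$ of the length-$4$ fat component of $W$. In the affine chart $X \neq 0$ with $r$ at the origin, each point $w = (\alpha_w, \beta_w) \in Z''$ is sent by $\phi_t$ to $(t^a \alpha_w, t^b \beta_w)$. If $a < b$, then all four sections approach the origin tangent to the $y$-axis, and I would argue via an analysis of the weighted initial ideal $\operatorname{in}_{(a,b)}(I_{Z''}) \subset \k[y,z]$ that any curvilinear flat limit must have tangent direction along the $y$-axis; this $y$-axis corresponds in projective coordinates to the line $\{Z=0\} \subset \P^2$, which contains $[0{:}1{:}0] \in \{p, q\}$, contradicting the hypothesis $T_r W \not\supset \{p, q\}$. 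If instead $a = b$, then the no-three-collinear condition forces at most two points of $Z''$ to be collinear with $r$, so the four sections approach $r$ from at least two distinct directions; this makes the Zariski tangent space of the flat limit at $r$ two-dimensional, so the limit fails to be curvilinear, contradicting the curvilinearity of $W$. In either case we obtain a contradiction.
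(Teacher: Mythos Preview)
Your approach is genuinely different from the paper's. The paper does not diagonalize $\phi$ at all; instead it passes to the \emph{dual} map $\phi^\vee \colon (\P^2_R)^\vee \dashrightarrow (\P^2_R)^\vee$ and tracks the $\binom{6}{2}$ lines joining pairs of points of $Z$. By semi-continuity, the flat limit of any such line must meet $W$ in length $\geq 2$, and there are only four lines in $\P^2$ with this property: $\overline{pq}$, $\overline{pr}$, $\overline{qr}$, and $T_rW$. A combinatorial argument then forces $\phi^\vee$ to have rank $2$ (so $\phi^\vee_0$ is constant, equal to some line $L$), shows $L$ passes through $p$ or $q$, and finally identifies $L = T_rW$. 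This argument never chooses coordinates and never computes a tangent direction from an initial ideal.

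Your proof has a real gap at the Smith normal form step. Writing $\phi = PDQ$ with $P,Q \in \GL_3(R)$, you may absorb $P$ into the target (replacing $W$ by $P_0^{-1}W$), but on the source side the family $Q(Z_R)$ is \emph{not} the constant family $(Q_0 Z)_R$: the section through $z \in Z$ is $t \mapsto Q(t)\cdot z$, which moves with $t$. Your subsequent analysis treats each source point as a constant $(\alpha_w,\beta_w)$, which is unjustified. Concretely, for $z$ with $Q_0(z) \in L = \{X=0\}$, the limit of $D(Q(t)\cdot z)$ depends on the $t$-adic valuation of the first coordinate of $Q(t)\cdot z$, and can be any point on the line $\{Z=0\}$ (not just $[0{:}1{:}0]$ or $[0{:}0{:}1]$); so the claim $\{p,q\} = \{[0{:}1{:}0],[0{:}0{:}1]\}$ fails. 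Likewise, in the $a<b$ tangent-direction argument, the four sections from $Z''$ have the form $(t^a\beta(t),\, t^b\gamma(t))$ with $\beta,\gamma \in R$ non-constant, and the weighted-initial-ideal computation you outline applies to constants $\beta_0,\gamma_0$, not to these. Your rank $2$ case and your $a=b$ non-curvilinearity argument are essentially correct (the latter because the Gr\"obner limit under a homothety of four non-collinear points has $I_0 \subset \mathfrak m^2$), but both implicitly rely on the same reduction. The approach may be salvageable by working with $\phi' = DQ$ on the genuinely constant source $Z$ and redoing the limit analysis without assuming $Q$ is trivial, but as written the diagonalization step does not do what you need it to.
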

\begin{proof}
  We prove the contrapositive.
  Let $\phi \in \Aut(\P^2_K)$ be an automorphism such that $W$ is the flat limit of $\phi(Z_K)$.
  We show that $T_r W$ contains $p$ or $q$.
  The key idea is to consider flat limits of lines in $\P^2$ under the action of $\phi$.
  To do so, we use the same analysis as before but for the map between dual projective spaces
  \[\phi^\vee \colon (\P^2)_R^\vee \dashrightarrow (\P^2)_R^\vee.\]
  If $A$ is the $3 \times 3$ matrix defining $\phi$, then the matrix defining $\phi^\vee$ is $(A^T)^{-1}$.

  If $\phi_0^\vee$ is defined for a line $\ell \in (\P^2)^\vee$, then the flat limit of the family of lines $\phi(\ell_K) \subset \P^2_K$ is the line corresponding to $\phi_0^\vee(\ell)$.
  The statements \eqref{eqn:rk2} and \eqref{eqn:rk1} for the dual projective space say the following.
  If $\phi^\vee$ has rank 1, then the indeterminacy locus of $\phi^\vee_0$ consists of a particular $\Lambda \in (\P^2)^\vee$.
  For $\ell \neq \Lambda$, the map $\phi_0^\vee$ is the composite of $\ell \mapsto \ell \cap \Lambda$ and a linear inclusion $\Lambda \to (\P^2)^\vee$.
  In this case, a fiber of $\phi_0^\vee$ consists of a collection of concurrent lines, with the point of concurrency on $\Lambda$.
  If $\phi^\vee$ has rank 1, then the indeterminacy locus of $\phi^\vee_0$ consists of lines through a particular point $p \in \P^2$.
  For $\ell \in (\P^2)^\vee$ not containing $p$, the map $\phi_0^\vee$ is constant.

  Consider a line $\ell$ joining pairs of points of $Z$.
  By semi-continuity, the flat limit of $\phi(\ell_K)$ must intersect $W$ in a scheme of length at least 2.
  There are at most four lines in $\P^2$ that intersect $W$ in a scheme of length at least 2: the lines $\overline{pq}$, $\overline{pr}$, $\overline{qr}$, and $T_rW$.
  In particular, if $\phi_0^\vee$ is defined at $\ell$, then $\phi_0^\vee(\ell)$ must be one of these 4.
  This immediately implies that $\phi^\vee$ cannot have rank 1.
  To see this, suppose it has rank 1.
  Consider the ${6 \choose  2} = 15$ lines joining pairs of points of $Z$.
  If $\phi_0^\vee$ takes only 4 values on these lines (or may be undefined on one of them), we would be able to partition them into 4 collections of concurrent lines, such that the points of concurrency are collinear.
  But it is easy to check that this is impossible for a star-free configuration $Z$.

  We have now concluded that $\phi^\vee$ has rank 2, and hence $\phi_0^\vee$ is constant on its domain.
  Suppose the constant value of $\phi_0^\vee$ corresponds to the line $L \subset \P^2$.
  Suppose $z_1 \in Z$ (resp. $z_2 \in Z$) is such that the flat limit of $\phi({z_1}_K)$ is $p$ (resp. $q$).
  Consider the 8 lines $\overline {z_i z}$ for $i= 1,2$ and $z \in Z \setminus \{z_1,z_2\}$.
  Since these lines are not all concurrent, $\phi_0^\vee$ is defined for at least one of them, say $\ell = \overline {z_1 z}$.
  Then $\phi^\vee_0(\ell)$ corresponds to the line $L$.
  Since $\ell$ contains $z_1$, the flat limit $L$ of $\phi(\ell_K)$ contains the flat limit $p$ of $\phi({z_1}_K)$.

  Now consider the ${4 \choose 2} = 6$ lines joining pairs of points of $Z \setminus \{z_1,z_2\}$.
  Since the 6 lines are not all concurrent, $\phi_0^\vee$ is defined on at least one of them, say $\ell$.
  Then the flat limit of $\phi(\ell_K)$ is also $L$.
  Since the flat limit of a point of $Z \setminus \{z_1,z_2\}$ is $r$, by semi-continuity, we have
  \[ \mult_r(L \cap W) \geq 2.\]
  In other words, $L = T_rW$.
  But then $T_rW$ contains $p$.
\end{proof}
\begin{remark}\label{prop:2p4r}
  The same conclusion holds for a curvilinear $W$ with cycle structure $2p + 4r$ such that $T_rW$ does not contain $p$.
  The same proof works with $p = q$.
\end{remark}

\subsubsection{Applications}
Let $X \subset \P^{3}$ denote the cubic surface defined in homogeneous coordinates $\left(x_{0}, x_{1}, x_{2}, x_{3}\right)$ by the equation \[x_{3}f_{2} - f_{3} = 0,\] where $f_{2} = x_{0}x_{1}$ and $f_{3} = x_{0}^3 + x_{1}^{3}-x_{1}x_{2}^{2}$.
This is the unique cubic surface possessing a single $A_5$ singularity, up to projective equivalence.
For the reader's amusement, we include a real picture of $X$ in \autoref{figure:cubicA5}.
\begin{figure}
  \centering
  \includegraphics[width=0.25\textwidth]{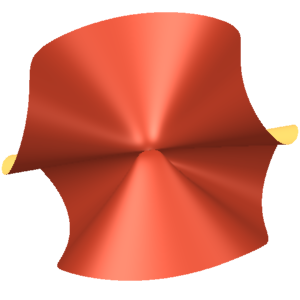}
  \caption{The unique cubic surface $X$ with an $A_5$ singularity}
  \label{figure:cubicA5}
\end{figure}

The surface $X$ is the cubic surface associated to the length 6 subscheme $W \subset \P^2$ defined by $f_2 = 0$ and $f_3 = 0$.
The scheme $W$ has the cycle $p + q + 4r$, where $p$, $q$, $r$ are distinct but collinear, and the tangent line $T_rW$ is distinct from the line $\overline{pqr}$.
The identity component of the automorphism group of $X$ is $\G_a$.
\begin{proposition}
  Let $S$ be a smooth cubic surface without an Eckardt point.
  Then $X$ is not in the $\PGL_4$ orbit closure of $S$.
\end{proposition}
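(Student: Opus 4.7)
The strategy is to combine Propositions \ref{prop:cubictopoints} and \ref{prop:pq4r}. The first reduces the question from cubic surfaces in $\P^3$ down to length 6 subschemes in $\P^2$, after which the second delivers the obstruction.

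I first unpack the structure of $W = V(f_2, f_3) \subset \P^2$. In the affine chart $x_2 = 1$, the equation $x_0^3 + x_1^3 - x_1 = 0$ can be solved at the origin as $x_1 = x_0^3 \cdot u$ for some unit $u$ in the completed local ring; combined with $x_0 x_1 = 0$ this gives $x_0^4 = 0$. Hence $W$ has cycle $p + q + 4r$ with $r = [0:0:1]$, $p = [0:1:1]$, $q = [0:1:-1]$, and $W$ is curvilinear at $r$ with tangent line $T_r W = \{x_1 = 0\}$. The three points $p, q, r$ all lie on the line $\{x_0 = 0\}$, but $T_r W$ is a different line through $r$ and contains neither $p$ nor $q$. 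In particular $W$ is admissible, and indeed $X = X_W$ is the associated cubic surface.

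Next, assume for contradiction that $X$ lies in the $\PGL_4$-orbit closure of $S$. Proposition \ref{prop:cubictopoints} yields a length 6 subscheme $Z \subset \P^2$ with $\Bl_Z \P^2 \cong S$, such that $W$ lies in the $\PGL_3$-orbit closure of $Z$. Since $S$ is smooth, $Z$ consists of six distinct points in general linear position (no three collinear). Under the standard dictionary relating the 27 lines on $\Bl_Z \P^2$ to incidences of $Z$, three lines $\ell_{ij}, \ell_{kl}, \ell_{mn}$ meeting at an Eckardt point of $S$ correspond exactly to a star point of $Z$; so the hypothesis that $S$ has no Eckardt point forces $Z$ to be star-free. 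Proposition \ref{prop:pq4r}, applied to $W$ whose cycle is $p + q + 4r$ with $T_r W$ missing both $p$ and $q$, then says $W$ cannot lie in the $\PGL_3$-orbit closure of any star-free configuration, contradicting the previous sentence.

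The main obstacle is pinning down the implication ``no Eckardt point on $S$'' $\Rightarrow$ ``$Z$ is star-free''; this is a classical statement about lines on cubic surfaces via the six-point model, but deserves a brief explicit justification to close the loop. The scheme-theoretic analysis of $W$ at $r$ is a routine local calculation, and once both are in hand the result follows by directly quoting the two propositions.
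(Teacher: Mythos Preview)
Your proof is correct and follows exactly the same approach as the paper: reduce to six points via Proposition~\ref{prop:cubictopoints}, observe that the absence of Eckardt points forces the resulting configuration $Z$ to be star-free, and then invoke Proposition~\ref{prop:pq4r}. You are more explicit than the paper about the local structure of $W$ at $r$ and about the Eckardt/star-point correspondence, but the argument is the same.
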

\begin{proof}
  We know that $S$ is isomorphic to $\Bl_Z \P^2$, where $Z \subset \P^2$ is a star-free configuration.
  The result now follows from \autoref{prop:cubictopoints} and \autoref{prop:pq4r}.
\end{proof}

\bibliographystyle{plain}
\bibliography{references}
\end{document}